\renewcommand{\bar}{\overline}
\newcommand{\lint}{\llbracket}
\newcommand{\rint}{\rrbracket}
\numberwithin{equation}{section}
\newtheorem{theorema}{Theorem}
\newtheorem{theorem}{Theorem}[section]
\newtheorem{lemma}[theorem]{Lemma}
\newtheorem{proposition}[theorem]{Proposition}
\newtheorem{cor}[theorem]{Corollary}
\newtheorem{rem}[theorem]{Remark}
\newcommand{\dd}{\mathrm{d}}
\newcommand{\ind}{\mathbf{1}}
\renewcommand{\tilde}{\widetilde}
\renewcommand{\hat}{\widehat}
\newcommand{\cA}{{\ensuremath{\mathcal A}} }
\newcommand{\cB}{{\ensuremath{\mathcal B}} }
\newcommand{\cF}{{\ensuremath{\mathcal F}} }
\newcommand{\cD}{{\ensuremath{\mathcal D}} }
\newcommand{\cZ}{{\ensuremath{\mathcal Z}} }
\newcommand{\bP}{{\ensuremath{\mathbf P}} }
\DeclareMathSymbol{\leqslant}{\mathalpha}{AMSa}{"36} 
\DeclareMathSymbol{\geqslant}{\mathalpha}{AMSa}{"3E} 
\DeclareMathSymbol{\eset}{\mathalpha}{AMSb}{"3F}     
\newcommand{\bbE}{{\ensuremath{\mathbb E}} }
\newcommand{\bbN}{{\ensuremath{\mathbb N}} }
\newcommand{\bbP}{{\ensuremath{\mathbb P}} }
\newcommand{\bbR}{{\ensuremath{\mathbb R}} }
\newcommand{\bbZ}{{\ensuremath{\mathbb Z}} }
\newcommand{\gd}{\delta}
\newcommand{\gep}{\varepsilon}       
\newcommand{\gl}{\lambda}
\def\captionfont@{\footnotesize}
\def\captionheadfont@{\scshape}
\long\def\@makecaption#1#2{%
  \vspace{2mm}
  \setbox\@tempboxa\vbox{\color@setgroup
    \advance\hsize-6pc\noindent
    \captionfont@\captionheadfont@#1\@xp\@ifnotempty\@xp
        {\@cdr#2\@nil}{.\captionfont@\upshape\enspace#2}%
    \unskip\kern-6pc\par
    \global\setbox\@ne\lastbox\color@endgroup}%
  \ifhbox\@ne 
    \setbox\@ne\hbox{\unhbox\@ne\unskip\unskip\unpenalty\unkern}%
  \fi
  \ifdim\wd\@tempboxa=\z@ 
    \setbox\@ne\hbox to\columnwidth{\hss\kern-6pc\box\@ne\hss}%
  \else 
    \setbox\@ne\vbox{\unvbox\@tempboxa\parskip\z@skip
        \noindent\unhbox\@ne\advance\hsize-6pc\par}%
\fi
  \ifnum\@tempcnta<64 
    \addvspace\abovecaptionskip
    \moveright 3pc\box\@ne
  \else 
    \moveright 3pc\box\@ne
    \nobreak
    \vskip\belowcaptionskip
  \fi
\relax
}
\def\writefig#1 #2 #3 {\rlap{\kern #1 truecm
\raise #2 truecm \hbox{#3}}}
\title{Critical Gaussian Multiplicative Chaos revisited}
\author{Hubert Lacoin}
\address{
  IMPA, Institudo de Matem\'atica Pura e Aplicada, Estrada Dona Castorina 110
Rio de Janeiro, CEP-22460-320, Brasil. 
}
\begin{document}

 \begin{abstract}
 We present new, short and self-contained proofs of the convergence  (with an adequate renormalization) of four different sequences to the critical Gaussian Multiplicative Chaos:(a) the derivative martingale (b) the critical martingale (c) the exponential of the mollified field (d) the subcritical Gaussian Multiplicative Chaos.  \\[4pt]
  2010 \textit{Mathematics Subject Classification: 60F99,  	60G15,  	82B99.}\\
\textit{Keywords: Random distributions, $\log$-correlated fields, Gaussian Multiplicative Chaos.} 
 \end{abstract}

\maketitle

 
 \section{Introduction}

We consider $K: \bbR^d\times \bbR^d \to (-\infty,\infty]$ to be a positive definite kernel on $\bbR^d$ ($d\ge 1$ is fixed) which admits a decomposition in the following form
 \begin{equation}\label{fourme}
  K(x,y):= \log \frac{1}{|x-y|}+L(x,y),
 \end{equation}
where $L$ is a continuous function ($\log 0=\infty$ by convention). 
A kernel $K$ is  positive definite if for any  $\rho\in C_c(\bbR^d)$ ($\rho$ continuous with compact support)
 $\int_{\cD^2 }  K(x,y) \rho(x)\rho(y)\dd x \dd y\ge 0.$
Given a centered Gaussian field $X$ with covariance $K$, the \textit{critical Gaussian Multiplicative Chaos} (or critical GMC) is heuristically defined as 
 \begin{equation}\label{defGMC}
 e^{\alpha X(x)} \dd x  \quad   \text{ with } \quad    \alpha=\sqrt{2d}.
 \end{equation}
The main difficulty that comes up when  trying to give an interpretation to the expression in  \eqref{defGMC} is 
that a field with a covariance of the type \eqref{fourme} can be defined only as a random distribution and thus $X$ is not defined pointwise.
The problem of rigorously defining the  GMC was first considered by Kahane in \cite{zbMATH03960673}. The standard procedure (for any $\alpha\ge 0$) is to  use a sequence of  approximation of $X$ 
and then pass to the limit. Mostly  two kinds of approximation of $X$ have been considered in the literature: 
\begin{itemize}
 \item [(A)] A mollification of the field, $X_{\gep}$,  via convolution with a smooth kernel on scale $\gep$,
 \item [(B)] A martingale approximation, $X_t$, via an integral decomposition of the kernel $K$.
\end{itemize}
Rigorous definitions of $X_{\gep}$ and $X_t$ are given in the next subsections. We define  $M^{\alpha}_\gep$ by
\begin{equation}\label{defalpha}
M^{\alpha}_\gep(f):=\int_{\bbR^d} f(x) e^{\alpha X_{\gep}(x)-\frac{\alpha^{2}}{2}\bbE[X_{\gep}(x)]} \dd x
\end{equation}
and $M^{\alpha}_t$  in the same manner using the martingale approximation $X_t$. Then we  let either $\gep\to 0$ or $t\to \infty$.
When $\alpha\in [0,\sqrt{2d})$, it has been proved \cite{Natele,Shamov} that both $M^{\alpha}_t$ and $M^{\alpha}_\gep$  converge to a nontrivial limiting measure $M^{\alpha}$, which does not
depend on the 
mollifier. We refer the reader to the introduction of \cite{Natele} or to the review \cite{RVreview} for an account of the steps leading to this result. We also refer to  \cite[Section 5]{RVreview} for motivations to study GMC coming from various fields in including theoretical physics.

\medskip

When $\alpha=\sqrt{2d}$, the limiting behavior is different. We have $\lim_{\gep\to 0}M^{\sqrt{2d}}_\gep(f)=0$ for any bounded measurable $f$ and a 
renormalization procedure is required to obtain a nontrivial limit. 
More precisely it has been proved that $\sqrt{\pi\log (1/\gep)/2} M^{2d}_\gep$ converges to a nontrivial limit that does not depend on the mollifier. This was achieved in three steps, starting 
with the martingale approximation $X_t$: 
\begin{itemize}
 \item [(i)] In \cite{MR3262492}, the convergence of the random (signed) measure  
 \begin{equation}\label{derivtiv}
 D_t(\dd x):=(\sqrt{2d}\bbE[X_{t}(x)]- X_t) e^{\sqrt{2d} X_{t}(x)- d \bbE[X_{t}(x)]} \dd x,
 \end{equation}
 to a nontrivial nonnegative measure $D_\infty$ was established.
 \item [(ii)] In \cite{MR3215583}, it was proved that $\sqrt{\pi t/2}  M^{\sqrt{2d}}_t$ converges to the same limit.
 \item [(iii)] The  convergence of $\sqrt{\pi \log (1/\gep)/2} M^{\sqrt{2d}}_\gep$ towards $D_{\infty}$ was shown in  \cite{MR3613704}. 
\end{itemize}
Additionally two other convergence results have been proved:
\begin{itemize}
 \item  [(iv)] In \cite{MR3933042, MR4396197}, the convergence of $(\sqrt{2d}-\alpha)M^{\alpha}$ towards $2D_{\infty}$ when $\alpha \uparrow \sqrt{2d}$.
 \item [(v)] In  \cite{powail}, the convergence of $D_{\gep}$ (defined as the mollified analogue of
 $D_t$) to $D_{\infty}$. 
\end{itemize}
We refer to \cite{MR4396197} for a thorough review of these results.  
 The objective of this paper is to present new simple and short  proofs of  statements $(i)$ to $(iv)$ above with a presentation as self-contained as possible.
We believe that the technique we present can also be adapted to  provide an alternative proof of $(v)$.
 Along the way we present proof for an asymptotic upper bound for $\log$-correlated fields (Proposition \ref{tiptop}) which is much shorter that what has appeared so far in the literature.
 
 \medskip
 
 While this manuscript is focused on the Gaussian case, let us mention that analogous random measures have been obtained considering the exponential of nongaussian $\log$-corelated fields \cite{jego1, jego2, junnila2018m}. In particular \cite{jego2} establishes results which are very similar to those mentionned above for the exponentiation of the square root of the local time of a planar Brownian Motion at the critical parameter.

\subsection{Mollification of the field and limit}\label{molly}

\subsubsection*{Log-correlated fields  defined as distributions}

Since $K$ is infinite on the diagonal, it is not possible to define  a Gaussian field indexed by $\bbR^d$ with covariance function $K$.
We consider instead a process indexed by test functions. 
We define $\hat K$, a bilinear form on  $C_c(\bbR^d)$, by   
\begin{equation}\label{hatK}
 \hat K(\rho,\rho')=
\int_{\cD^2}  K (x,y)\rho(x)\rho'(y)\dd x \dd y.
\end{equation}
Since $\hat K$ is positive definite, we can  consider $X= \langle X, \rho \rangle_{\rho\in C_c(\bbR^d)}$ a centered Gaussian field indexed by $C_c(\bbR^d)$ with covariance kernel given by $\hat K$.

\subsubsection*{Convolution approximation}

The random distribution $X$ can be approximated by a sequence of  functional fields - fields indexed by $\bbR^d$ - 
by the mean of convolution with a smooth kernel.
 Consider $\theta$ a nonnegative function in $C_c^{\infty}(\bbR^d)$  whose compact support is included in  $B(0,1)$ (for the remainder 
 of the paper $B(x,r)$ denotes the closed 
 Euclidean ball of center $x$ and radius $r$) 
 and which satisfies  $\int_{B(0,1)} \theta(x) \dd x=1.$
We define for $\gep>0$,
$\theta_{\gep}:=\gep^{-d} \theta( \gep^{-1}\cdot)$, set
and consider $(X_{\gep}(x))_{x\in \bbR^d}$, the mollified version of $X$, that is
\begin{equation}
 X_{\gep}(x):= \langle X, \theta_{\gep}(x-\cdot) \rangle
\end{equation}
From \eqref{hatK}, the field $X_{\gep}(\cdot)$ has covariance 
\begin{equation}\label{labig}
K_{\gep}(x,y):=\bbE[X_{\gep}(x)X_{\gep}(y)]=
\int_{\bbR^{2d}} \theta_{\gep}(x-z_1) \theta_{\gep}(y-z_2)
K(z_1,z_2)\dd z_1 \dd z_2.
\end{equation}
We set $K_{\gep}(x):=K_{\gep}(x,x)$ and use a similar convention for other covariance functions.
Since $K_{\gep}$ is infinitely differentiable, by Kolmogorov's Continuity Theorem (e.g.\  \cite[Theorem 2.9]{legallSto})  there exists a continuous modification of $X_{\gep}(\cdot)$. In the remainder of the paper, we always consider the continuous modification of a process when it exists, this ensures that integrals such as the one appearing in \eqref{forboundedfunctions} are well defined.
 We let $\mathcal B_b$ denote the bounded Borel subsets of $\bbR^d$ and $B_b$ denote the bounded Borel functions with bounded support
 \begin{equation}\label{measurable}
 B_b=B_b(\bbR^d)= \left\{ f \  \text{measurable} \ : \sup_{x\in \bbR^d} |f(x)|<\infty , \ \{ x \ : \ |f(x)|\ne 0\} \in \cB_b  \right\}.
 \end{equation}
We  define the measure     $M^{\alpha}_{\gep}$ by setting for $f\in B_b$ (recall \eqref{defalpha})
 \begin{equation}\label{forboundedfunctions}
    M^{\alpha}_{\gep}(f):=\int_{\bbR^d} f(x) e^{\alpha X_{\gep}(x)- \frac{\alpha^2}{2} K_\gep(x)}\dd x.
 \end{equation}
 We set $M^{\alpha}_{\gep}(E):=M^{\alpha}_{\gep}(\ind_{E})$ for $E\in \mathcal B_b$ and keep a similar convention for other measures.
When considering the the value $\alpha=\sqrt{2d}$ (which is most of the time) we drop the dependence in $\alpha$ in the notation, we set  $M_{\gep}= M^{\sqrt{2d}}_{\gep}$.
 Before stating the main results, we need to mention an important assumption that need to make on $K$.

\subsection{Star-scale invariance and our assumption on $K$}

We assume, in most of the paper, that the kernel $K$ has an \textit{almost star-scale invariant} part. This assumption might seem at first quite restrictive, but it has been shown in 
\cite{junnila2019} that it is \textit{locally} satisfied as soon as  $L$ is sufficiently regular. We explain in Appendix \ref{sobolev} how this allows to extend the results 
 to log-correlated field on arbitry domains, provided that $L$ satisfies the required regularity assumption.
Following a terminology introduced in \cite{junnila2019}, we say that a the kernel $K$ defined on $\bbR^d$ is \textit{almost star-scale invariant}
if it can be written in the form  
\begin{equation}\label{iladeuxstar}
 \forall x,y \in \bbR^d,\  K(x,y)=\int^{\infty}_{0} (1-\eta_1 e^{-\eta_2 t}) \kappa(e^{t}(x-y))\dd t,
\end{equation}
 where  $\eta_1\in[0,1]$ and $\eta_2>0$ are constants and the function $\kappa\in C^{\infty}_c(\bbR^d)$ is radial, nonnegative and definite positive. More precisely we assume the following:
\begin{itemize}
 \item [(i)] $\kappa\in C_c^{\infty}(\bbR^d)$ and there exists $\tilde \kappa : \ \bbR^+\to [0,\infty)$ such that $\kappa(x):=\tilde \kappa(|x|)$,
 \item [(ii)] $\tilde \kappa(0)=1$ and $\tilde\kappa(r)=0$ for $r\ge 1$,
 \item [(iii)]The mapping $(x,y)\mapsto \kappa(x-y)$ defines a positive definite-kernel on $\bbR^d\times \bbR^d$.
\end{itemize}
We say furthermore that a kernel $K$   \textit{has an almost star-scale invariant part}, if 
\begin{equation}\label{3star}
 \forall x,y\in \bbR^{d}, \   K(x,y)=K_0(x,y)+ \bar K(x,y)
\end{equation}
where $\bar K(x,y)$ is an almost star-scale invariant kernel and $K_0$ is H\"older continuous on $\bbR^{2d}$ and positive  definite. 
 Given $K$ with an almost star-scale invariant part, and using the decomposition \eqref{iladeuxstar} for $\bar K$, we set
\begin{equation}\label{defqt}
 Q_t(x,y):=  \kappa(e^{t'}(x-y))
 \end{equation}
where $t'$ is defined as the unique positive solution of $ t'-\frac{\eta_1}{\eta_2}(1-e^{-\eta_2 t'})=t.$
We set 
\begin{equation}\label{kkttt}\begin{split}
 K_t(x,y)&:=K_0(x,y)+ \int^t_0 Q_s(x,y) \dd s \\
 &=K_0(x,y) + \int_{0}^{t'} (1-\eta_1 e^{-\eta_2 s}) \kappa(e^{s}(x-y))\dd s=:K_0(x,y)+ \bar K_t(x,y).
\end{split}\end{equation}
Note that we have $\lim_{t\to \infty} K_t(x,y)=K(x,y)$ and $\bar K_t(x)=t$.
If $K$ satisfies \eqref{iladeuxstar} then
\begin{equation}
 L(x,y):= K(x,y)+\log |x-y|,
\end{equation}
can be extended to a continuous function on $\bbR^{2d}$, so that a kernel $K$ with an almost star-scale invariant part
can always be written in the form \eqref{fourme}.

\begin{rem}
 There is an obvious conflict of notation between $K_t$ introduced above  and $K_{\gep}$ introduced in \eqref{labig} 
 and the same can be said about $X_t$ and $M_t$ introduced in the next section. However this abuse should not cause any confusion since we keep using the letter 
 $\gep$ for quantities related to the mollified field $X_{\gep}$ and latin letters for quantities related to the martingale approximation $X_t$.  
\end{rem}

\subsection{Convergence of the convolution approximation}

The main result whose proof we wish to present in this paper is the convergence of the measure $M_{\gep}$ - properly rescaled - 
towards a limit $M'$. The principal task is to show that $M_{\gep}(E)$ converges for any fixed $E$.

\begin{theorem}\label{mainres}
 If $X$ is a centered Gaussian field whose covariance kernel $K$  has an almost star-scale invariant part and $E\in \mathcal B_b$ is  
 of positive Lebesgue measure then
there exists an a.s.\ positive random variable $M'(E)$ such that the following convergence holds in probability
 \begin{equation}\label{inproba}
  \lim_{\gep \to 0}\sqrt{\frac{\pi\log (1/\gep)}{2}}M_{\gep}(E)=M'(E).
 \end{equation}
 The limit does not depend on the specific mollifier $\theta$ used to define $X_{\gep}$.
\end{theorem}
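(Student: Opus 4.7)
The plan is to reduce the problem to a martingale approximation and then compare, proceeding in three successive steps. Using the decomposition \eqref{kkttt}, let $X_t$ denote the martingale approximation with filtration $\cF_t = \sigma(X_s, s \leq t)$, and let $M_t(E) = M_t^{\sqrt{2d}}(E)$ denote the associated critical measure. The limit object is obtained as the a.s.\ limit of the derivative martingale along the $t$-scale.

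Step 1 (derivative martingale). For a barrier parameter $\gga > 0$, introduce
\begin{equation*}
D_t^{\gga}(E) := \int_E \bigl(\sqrt{2d}\,t - X_t(x) + \gga\bigr)\, e^{\sqrt{2d}\,X_t(x) - d K_t(x)}\, \ind_{A_t^{\gga}(x)}\, \dd x,
\end{equation*}
where $A_t^{\gga}(x) = \{\sqrt{2d}\,s - X_s(x) + \gga \geq 0 \text{ for all } s \leq t\}$. By Itô calculus (using that $\partial_t K_t = Q_t$ is smooth, nonnegative definite and compactly supported), $D_t^{\gga}(E)$ is a positive $\cF_t$-martingale. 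A second-moment computation, exploiting that the barrier event reduces the contribution from points near the critical slope $\sqrt{2d}\,s$ by a factor of order $s^{-3/2}$, yields $L^2$-boundedness and hence a.s.\ convergence to a limit $D_\infty^{\gga}(E)$. Monotone passage $\gga \to \infty$ produces the derivative martingale limit $D_\infty(E)$, whose positivity follows from a $0$--$1$ law.

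Step 2 (Seneta--Heyde norming). Show that $\sqrt{\pi t / 2}\, M_t(E) \to D_\infty(E)$ in probability. The contribution to $M_t(E)$ coming from points $x$ violating the barrier $A_t^{\gga}$ is negligible thanks to the asymptotic upper bound on $\sup_x X_s(x)$ supplied by Proposition \ref{tiptop}. On the barrier event one applies the Girsanov change of measure associated to the weight $e^{\sqrt{2d}\,X_t(x) - d K_t(x)}$: under the new measure, $s \mapsto \sqrt{2d}\,s - X_s(x) + \gga$ is (up to the H\"older-continuous correction from $K_0$) a standard Brownian motion started from $\gga$. The classical asymptotic
\begin{equation*}
\bbE_h\bigl[f(B_t)\, \ind_{\{B_s \geq 0,\, s \leq t\}}\bigr] \sim h \sqrt{\frac{2}{\pi t}}\, \tilde{\bbE}_h\bigl[f(B_t)\bigr]
\end{equation*}
applied with $h = \gga$ and $f \equiv 1$ then identifies $\sqrt{\pi t/2}\, M_t(E)$ with $D_t^{\gga}(E)(1+o(1))$, up to an $L^1$-error that vanishes as $\gga \to \infty$.

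Step 3 (comparison of approximations). Setting $t = \log(1/\gep)$, couple $X_\gep$ and $X_t$ so that the residual field $Y_\gep(x) := X_\gep(x) - X_t(x)$ has uniformly bounded variance, which is possible because both $X - X_t$ and $X_\gep - X$ are essentially concentrated at scales below $\gep$. A conditional second-moment computation along the barrier event, combined with the fact that $K_\gep(x) - K_t(x)$ is bounded uniformly in $x$, then yields $\sqrt{\pi t/2}\, M_\gep(E) - \sqrt{\pi t/2}\, M_t(E) \to 0$ in probability. This produces the limit $M'(E) = D_\infty(E)$, and mollifier-independence follows by running the comparison with a second mollifier $\theta'$. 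The hard part is Step 2: extracting the exact constant $\sqrt{\pi/2}$ from the Brownian-in-a-halfline asymptotics while controlling the error uniformly in $\gga$ is where the short proof of Proposition \ref{tiptop} advertised in the introduction is essential to keep the argument self-contained.
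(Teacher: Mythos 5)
Your architecture is the classical three-step route of the original literature (derivative martingale, then Seneta--Heyde norming of $M_t$, then comparison of $M_\gep$ with $M_{t}$ at matched scales), whereas the paper never compares $M_\gep$ with $M_t$: it conditions the truncated mollified measure directly on $\cF_s$ (Lemma \ref{lecondit2}) and controls the conditional variance by finite-range decorrelation. That difference would be acceptable if your steps were complete, but Step 2 has a genuine gap. The Girsanov tilt combined with the Brownian half-line asymptotic $\bbE_h\bigl[f(B_t)\ind_{\{\forall s\le t,\,B_s\ge 0\}}\bigr]\sim h\sqrt{2/(\pi t)}\,\tilde\bbE_h[f(B_t)]$ is a statement about (conditional) first moments only: it shows that $\bbE\bigl[\sqrt{\pi t/2}\,M^{(\gga)}_t(E)\mid \cF_s\bigr]$ converges to the truncated derivative martingale at time $s$ (this is exactly the paper's Lemma \ref{lecondit}). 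It does not ``identify $\sqrt{\pi t/2}\,M_t(E)$ with $D^{\gga}_t(E)(1+o(1))$'' in any stochastic sense: to get convergence in probability you must in addition show that $\sqrt{\pi t/2}\,M^{(\gga)}_t(E)$ concentrates around that conditional expectation, i.e.\ prove something like $\lim_{s\to\infty}\limsup_{t\to\infty} t\,\bbE\bigl[\,|M^{(\gga)}_t(E)-\bbE[M^{(\gga)}_t(E)\mid\cF_s]|^2\bigr]=0$, which in the paper rests on the independence of the increments $X^{(s)}_\cdot(x)$, $X^{(s)}_\cdot(y)$ for $|x-y|\ge e^{-s}$ together with the two-point bound \eqref{tek1}/\eqref{whendelta} (the $(u+1)^{-3/2}(t-u+1)^{-1}$ decay coming from the barrier). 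Proposition \ref{tiptop}, which you single out as the essential ingredient for Step 2, only removes the truncation; it gives no fluctuation control, and no uniformity in $\gga$ of the first-moment asymptotics can substitute for it. This missing concentration argument is precisely the content of the Seneta--Heyde norming (\cite{MR3215583}), i.e.\ the heart of the theorem, so as written Step 2 does not go through.

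Step 3 is likewise only a plan rather than a proof: boundedness of the variance of $Y_\gep=X_\gep-X_{t_\gep}$ and of $K_\gep-K_{t_\gep}$ does not by itself make $\sqrt{t_\gep}\,(M_\gep(E)-M_{t_\gep}(E))$ small, because at criticality the weights are not in $L^2$ without the barrier; the ``conditional second-moment computation'' has to be run on truncated weights, needs the mollified two-point estimate \eqref{tek2}, and needs quantitative control of the cross-covariance $\bar K_{s,\gep,0}$ (in the paper this is the $\delta_\gep$ estimate inside the proof of Lemma \ref{lecondit2}); carrying this out is essentially the content of \cite{MR3613704}. Note also that the paper's proof of Theorem \ref{mainres} starts from the observation that the statement depends only on the law of the distribution-valued field $X$, so one may realize $X_\gep$ on the same probability space as the martingale approximation and prove $\sqrt{\pi\log(1/\gep)/2}\,M^{(q)}_\gep(E)\to \bar D^{(q)}_\infty(E)$ in $L^2$ by the same conditioning scheme, using decorrelation at distance $e^{-s}+2\gep$ --- this bypasses both of the places where your sketch is incomplete.
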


\noindent In order to extend the statement to a  convergence of measure, we need to specify a topology. We say that a sequence of locally finite measures 
$(\mu_n)$ converges weakly to $\mu$ if
\begin{equation}\label{wikiwik}
\forall f \in C_c(\bbR^d), \quad  \lim_{n\to \infty} \int_{\bbR^d} f(x)\mu_n(\dd x)= \int_{\bbR^d} f(x) \dd \mu(\dd x).
\end{equation}
The topology of weak convergence for locally finite nonnegative measures is metrizable and separable, hence we can associate to it a notion of convergence in probability for a sequence of 
random measure. An example of metric generating the topology is given in Appendix \ref{weakconvmeas}.
For $E\in \mathcal B_b$ we let $|E|:= \max_{x,y\in E}|x-y|$ denote the diameter of $E$. We introduce the gauge function $\phi$ defined on $[0,\infty)$ by
\begin{equation}\label{phifou}
 \phi(u)=\begin{cases} \log \log (1/u)[\log (1/u)]^{-1/4} \quad &\text{ if } u\le e^{-e},\\ 
e^{-1/4} \quad &\text{ if } u\ge e^{-e}.
         \end{cases}
\end{equation}

\begin{theorem}\label{mainresprim}
There exists a locally finite random measure $M'$ with dense support such that 
 \begin{equation*}
    \lim_{\gep \to 0}\sqrt{\frac{\pi\log (1/\gep)}{2}}M_{\gep}=M',
 \end{equation*}
weakly in probability. Furthermore $M'$ has no atoms and we have any fixed $R>0$
\begin{equation}\label{gageli}
\sup_{E\subset B(0,R)} M'(E)/\phi(|E|)<\infty \quad \bbP-a.s.
\end{equation}

\end{theorem}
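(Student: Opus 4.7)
The plan is to leverage Theorem \ref{mainres} as the core pointwise convergence statement and build three additional pieces: upgrading to weak convergence of measures, establishing the modulus-of-continuity bound \eqref{gageli}, and deducing dense support and absence of atoms from these.

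\textbf{Step 1 (From pointwise to weak convergence).} For $f\in C_c(\bbR^d)$ with support in a compact $K\in \cB_b$, I would partition $K$ into small cubes $Q_1,\dots,Q_N$ of diameter at most $\delta$, pick $x_i\in Q_i$, and decompose
\begin{equation*}
M_\gep(f)=\sum_{i=1}^N f(x_i)\,M_\gep(Q_i) + R_\gep(f,\delta), \qquad |R_\gep(f,\delta)|\leq \omega_f(\delta)\,M_\gep(K),
\end{equation*}
where $\omega_f$ is the uniform modulus of continuity of $f$. Applying Theorem \ref{mainres} to each $Q_i$ and to $K$, the main term converges in probability to $\sum_i f(x_i)M'(Q_i)$ and $\sqrt{\pi\log(1/\gep)/2}\,M_\gep(K)$ is tight. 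Letting $\delta\to 0$ along a countable family of partitions whose boundaries are $M'$-negligible (which will follow from Step 3 once atoms are excluded) then yields convergence of $\sqrt{\pi\log(1/\gep)/2}\,M_\gep(f)$ in probability, identifying the limit as $M'(f):=\int f\,dM'$. Together with the metrizability of weak convergence recalled in Appendix \ref{weakconvmeas}, this gives weak convergence in probability.

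\textbf{Step 2 (Gauge bound).} This is the main technical step and the expected obstacle. I would proceed via moment estimates together with a dyadic chaining/Borel-Cantelli argument. Concretely, for a well-chosen $p\in (0,1)$ I would establish an inequality of the shape
\begin{equation*}
\bbE\bigl[M'(Q)^p\bigr] \leq C\,|Q|^{pd}(\log 1/|Q|)^{\gamma_p}
\end{equation*}
for small cubes $Q\subset B(0,R)$, with explicit $\gamma_p$. Such estimates transfer to $M'$ from known (sub)critical GMC bounds by first proving them uniformly in $\gep$ for the prelimit $\sqrt{\pi\log(1/\gep)/2}\,M_\gep$ and then passing to the limit via Fatou. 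Covering $B(0,R)$ by the $O(2^{dn})$ dyadic cubes of side $2^{-n}$, Markov's inequality yields
\begin{equation*}
\bbP\Bigl(\max_{Q:\,|Q|\asymp 2^{-n}} M'(Q) > A\,\phi(2^{-n})\Bigr) \leq C\,2^{dn}\,A^{-p}\,\phi(2^{-n})^{-p}\,\bbE[M'(Q)^p].
\end{equation*}
The specific form $\phi(u)=\log\log(1/u)\,[\log(1/u)]^{-1/4}$ is tailored so that, for the right $p$, this upper bound is summable in $n$; Borel-Cantelli then delivers \eqref{gageli}.

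\textbf{Step 3 (Atoms and support).} Absence of atoms is immediate from \eqref{gageli}, since $\phi(r)\to 0$ as $r\to 0$, so any putative atom at $x$ would force $M'(B(x,r))\geq M'(\{x\})>0$ for all $r$, contradicting the gauge bound. For dense support, apply Theorem \ref{mainres} to the countable family of balls $B(q,r)$ with $q\in \bbQ^d$ and rational $r>0$: on the intersection of the (almost sure) events $\{M'(B(q,r))>0\}$, every nonempty open set contains some such ball and thus has positive $M'$-mass.

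The crux is Step 2: obtaining the precise gauge $\phi$ requires tuning $p\in(0,1)$ against the volume count $2^{dn}$ and the moment exponent $\gamma_p$, and the double-logarithmic factor suggests an extra refinement beyond a plain Markov bound (for instance a subpolynomial correction coming from a good-event decomposition on the field $X_t$ at the scale $t=\log(1/|Q|)$). Steps 1 and 3 are, by contrast, soft consequences of Theorem \ref{mainres} and of the gauge bound.
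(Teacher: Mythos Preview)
Your Steps 1 and 3 are essentially what the paper does (Step~1 is the content of Proposition~\ref{weako}; the worry about null boundaries of cubes is unnecessary since one can approximate $f$ by simple functions and use tightness of $M_\gep(K)$ directly). Step~3 matches the paper exactly.

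Step~2 contains a genuine gap. A fractional-moment bound with $p<1$ cannot close a union bound over cubes at any scale: there are $\asymp \delta^{-d}$ cubes of side $\delta$, and Markov with a $p$-th moment of order $\delta^{pd}(\log 1/\delta)^{\gamma_p}$ gives a per-cube failure probability $\phi(\delta)^{-p}\delta^{pd}(\log 1/\delta)^{\gamma_p}$. Summed over cubes this carries a factor $\delta^{-(1-p)d}$, which diverges for every $p<1$, regardless of the polylogarithmic corrections or of how cleverly you space the scales. On the other hand $p\ge 1$ is unavailable because critical GMC has infinite first moment (indeed $\bbE[\bar D_\infty(E)]=\lim_q q\lambda(E)=\infty$). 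So neither direction of your proposed tuning of $p$ can produce a summable Borel--Cantelli series.

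The paper circumvents this by working not with $M'$ but with the \emph{truncated} derivative martingale $D^{(q)}$, which has a finite second moment. Since $M'(E)=\bar D_\infty(E)=\bar D^{(q)}_\infty(E)$ for all $q\ge q_0(\omega)$ (Proposition~\ref{tiptop} and \eqref{zob}), it suffices to prove \eqref{gageli} for $\bar D^{(q)}_\infty$ at each fixed $q$. Proposition~\ref{convdq} gives $\bbE[(\bar D^{(q)}_\infty(E))^2]\le C|E|^{d}(\log 1/|E|)^{-1/2}$; the exponent $-1/4$ in $\phi$ is precisely half of this $-1/2$ from Chebyshev. A second trick is that the paper uses a \emph{super-exponential} sequence of scales $\delta_n=2^{-2^n}$ rather than standard dyadic: with standard dyadic the Borel--Cantelli sum is $\asymp(\log n)^{-2}$, which diverges, whereas with $\delta_n=2^{-2^n}$ the same computation gives $\asymp n^{-2}$, which converges (and produces the $\log\log$ in $\phi$). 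Both ingredients---truncation to access second moments, and the super-exponential grid---are missing from your outline and are essential.
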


\begin{rem}
 Most of our efforts are dedicated to the proof of Theorem \ref{mainres} and of
\eqref{gageli}. The (standard) task of showing that the convergence of $M_{\gep}(E)$ for every $E$ implies  weak convergence of measures
is  performed in Appendix \ref{weakconvmeas} for completeness. The fact that $M'$ has dense support is a consequence of the a.s.\ positivity of $M'(E)$ in Theorem \ref{mainres}.
\end{rem}

\begin{rem}
Theorem \ref{sobolevteo} extends Theorem \ref{mainresprim} to all $\log$-correlated fields defined on an open domain $\mathcal D$ provided that the function $L$ in 
\eqref{fourme} has sufficient regularity. This condition on $L$ is satisfied for most if not all known applications of
Gaussian Multiplicative Chaos \cite[Section 5]{RVreview},  and in particular by planar free fields considered in \cite{MR3933042, lioucrit, HRV}.  Without any modification to the proof, it is also possible to extend our result to chaos defined with a reference measure which is not Lebesgue measure, provided that it is ``sufficiently spread out'', see the discussion in Section \ref{deesk}.
\end{rem}

\subsection{The martingale decomposition of $X$} \label{martindecoco}
Given $K$ a kernel on $\bbR^d$ with an almost star-scale invariant part, 
we define $(X_{t}(x))_{x\in \bbR^d, t\ge 0}$ to be a centered Gaussian field with covariance given by 
(using the  notation $a\wedge b:=\min(a,b)$, $a\vee b:=\max(a,b)$)
\begin{equation}
 \bbE[X_t(x)X_s(y)]= K_{s\wedge t}(x,y).
\end{equation}
Since  $(s,t,x,y)\mapsto K_{s\wedge t}(x,y)$ is H\"older continuous, 
the field admits a continuous modification.
We let  
$\mathcal F_t:= \sigma\left( (X_s(x))_{ x\in \bbR^d ,s\in [0,t]}\right)$
denote the natural filtration associated with $X_{\cdot}(\cdot)$.
The process $X$ indexed by $C_c(\bbR^d)$ and defined by
$\langle X,\rho\rangle= \lim_{t\to \infty} \int_{\bbR^d} X_t(x) \rho(x)\dd x,$
is a centered Gaussian field, so that $X_t$ is an approximation sequence for a $\log$-correlated field with covariance $K$.
We define the measure     $M^{\alpha}_{t}$ by setting for $f\in B_b$ (recall \eqref{measurable})  
 \begin{equation}\label{defalfat}
    M^{\alpha}_{t}(f):=\int_{\bbR^d} f(x)  e^{\alpha X_{t}(x)- \frac{\alpha^2}{2} K_t(x)} \dd x,
 \end{equation}
 and set $M_t= M^{\sqrt{2d}}_t$.
By independence of the increments of $X$ (see Section \ref{considerations}), $M_t(f)$ is an $(\cF_t)$-martingale. 
We also consider the  \textit{derivative martingale} $D_t$, defined by 
\begin{equation}\label{laderivvv}
 D_t(f)= \int_{\bbR^d}f(x)\left( \sqrt{2d}K_t(x)-X_t \right)  e^{\sqrt{2d} X_{t}(x)- d K_t(x)} \dd x.
\end{equation}
We have $D_t(f):=- \partial_{\alpha}     M^{\alpha}_{t}(f)|_{\alpha=\sqrt{2d}}$ (hence the name). 
To prove Theorem \ref{mainres} we first  establish the convergence of $D_t$. 
We let $\gl$ denote the Lebesgue measure and define 
\begin{equation}
 \bar D_\infty(f):=\limsup_{t\to \infty} D_{t}(f).
\end{equation}
The following result, that is, the convergence of $D_t$ to a non-trivial limit, is also  the first step to prove Theorem \ref{mainres}.
 \begin{theorem}\label{derivmartin}
For any fixed $E\in \cB_b$ we have almost surely
 \begin{equation}
\bar  D_\infty(E)= 
 \lim_{t\to \infty} D_{t}(E)\in[0,\infty),
 \end{equation}
 If $\gl(E)>0$, then $\bbP[ \bar D_{\infty}(E)>0=1]$. Recalling   \eqref{phifou}, for every $R>0$ we have
\begin{equation}\label{lagaage}
 \sup_{E\subset B(0,R)} \bar D_{\infty}(E)/\phi(|E|)<\infty.
 \end{equation}
\end{theorem}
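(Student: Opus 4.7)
I would organize the proof around four stages: (1) truncate $D_t$ to a non-negative martingale using a Brownian barrier, (2) deduce almost sure convergence for the truncated version, (3) transfer convergence to $D_t$ on a good event, and (4) obtain non-degeneracy and the gauge-function bound. The first three stages constitute a streamlined version of the classical barrier-truncation method; the fourth is the main technical obstacle.

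For the truncation, set $Y_s(x) := \sqrt{2d}K_s(x) - X_s(x)$. A Girsanov computation shows that under the tilted measure $d\bbQ_x := e^{\sqrt{2d}X_t(x) - dK_t(x)} d\bbP$ on $\cF_t$, the process $(Y_s(x))_{s\leq t}$ is a continuous centered Gaussian martingale with variance $K_s(x) = K_0(x) + s$, i.e.\ essentially a Brownian motion. For $\beta > 0$, introduce the stopping time $\tau_\beta(x) := \inf\{s : Y_s(x) \leq -\beta\}$ and define
\[
D_t^\beta(E) := \int_E (Y_t(x) + \beta)\,\ind_{\{\tau_\beta(x) > t\}}\, e^{\sqrt{2d}X_t(x) - dK_t(x)}\,dx.
\]
Differentiating $e^{\alpha X_t - \frac{\alpha^2}{2}K_t}$ at $\alpha = \sqrt{2d}$ shows that $(Y_t(x) + \beta)\, e^{\sqrt{2d}X_t(x) - dK_t(x)}$ is an $(\cF_t)$-martingale for each fixed $x$; stopping at $\tau_\beta(x)$, where the prefactor vanishes, makes it non-negative, and Fubini gives that $D_t^\beta(E)$ is itself a non-negative $(\cF_t)$-martingale, which converges $\bbP$-a.s.\ to some $D_\infty^\beta(E)\in[0,\infty)$ with expectation at most $\beta\gl(E)$.

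To pass to $D_t$, define the good event $\cA_\beta := \{Y_s(x) > -\beta \text{ for Lebesgue-a.e.\ } x \in E \text{ and all } s \geq 0\}$. On $\cA_\beta$ the barrier indicators identically equal $1$, giving $D_t^\beta(E) = D_t(E) + \beta M_t(E)$. Combined with the known critical degeneracy $M_t(E) \to 0$ $\bbP$-a.s.\ (a short second-moment argument using Kahane's convexity, or a direct computation leveraging star-scale invariance) and with $\bbP(\cA_\beta) \to 1$ as $\beta \to \infty$ --- which follows from Proposition \ref{tiptop} applied to $X_s(\cdot) - \sqrt{2d}K_s(\cdot)$, whose supremum over $(x,s)$ is a.s.\ finite because $\max_x X_s(x) \sim \sqrt{2d}s - c\log s$ is dominated by the deterministic drift $\sqrt{2d}K_s(x) \sim \sqrt{2d}s$ as $s\to\infty$ --- one deduces $\bbP$-a.s.\ convergence of $D_t(E)$ to $\bar D_\infty(E)$, which coincides with $D_\infty^\beta(E)$ on $\cA_\beta$. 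For non-degeneracy when $\gl(E)>0$, the event $\{\bar D_\infty(E) = 0\}$ obeys a zero-one dichotomy via the scale decomposition $X = X_T + (X - X_T)$ and independence of the martingale increments over disjoint sub-cells of $E$ at scale $e^{-T}$; combined with the first-moment lower bound $\bbE[D_\infty^\beta(E)]>0$ (coming from Fatou applied to $\bbE[D_t^\beta(E)] = \beta\gl(E)$), this excludes the trivial case.

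The principal obstacle is the gauge-function bound \eqref{lagaage}. My plan is a multi-scale dyadic analysis: partition $B(0,R)$ into sub-cells of diameter $e^{-n}$, use independence of field increments to factor each $\bar D_\infty$ over a sub-cell as $e^{\sqrt{2d}X_n(x) - dK_n(x)}$ times a rescaled copy of the construction at smaller scales, and combine with the Brownian ballot-problem estimate $\bbQ_x(\tau_\beta(x) > n) \asymp \beta/\sqrt{n}$ together with a Borel-Cantelli summation over dyadic scales $|E| \sim e^{-2^n}$. The Brownian estimate, optimized in $\beta$, should yield the $[\log(1/|E|)]^{-1/4}$ factor in $\phi$, and the Borel-Cantelli summation the $\log\log(1/|E|)$ factor. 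Proposition \ref{tiptop} will be used throughout to rule out pathological high-field events at each scale.
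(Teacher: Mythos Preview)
Your stages (1)--(3) are essentially the paper's approach: your $D_t^\beta$ is the paper's $D^{(q)}_t$ (Section~\ref{trunkz}), your good event $\cA_\beta$ is $\cA^{(q)}_R$, and the transfer via $M_t(E)\to 0$ on $\cA_\beta$ is exactly Corollary~\ref{cortiptop}. So the skeleton is right.

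There is, however, a genuine gap in your non-degeneracy argument. You write that $\bbE[D^\beta_\infty(E)]>0$ ``comes from Fatou applied to $\bbE[D_t^\beta(E)]=\beta\gl(E)$'', but Fatou gives the inequality in the wrong direction: it yields $\bbE[D^\beta_\infty(E)]\le\beta\gl(E)$, not $\ge$. To conclude $\bbE[D^\beta_\infty(E)]=\beta\gl(E)>0$ you need uniform integrability of the martingale $D^\beta_t(E)$, and nothing in your outline establishes this. The paper's solution is to prove that $D^{(q)}_t(E)$ is bounded in $L^2$ (Proposition~\ref{convdq}); the key device is Lemma~\ref{lecondit}, which identifies $D^{(q)}_s(E)=\lim_{t\to\infty}\bbE\big[\sqrt{\tfrac{\pi(t-s)}{2}}\,M^{(q)}_t(E)\,\big|\,\cF_s\big]$ and thereby reduces the $L^2$ bound for $D^{(q)}$ to the second-moment estimate for $\sqrt{t}\,M^{(q)}_t$ in Proposition~\ref{lateknik}. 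This conditional-expectation identity is the paper's main new idea, and you are missing any substitute for it.

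The same gap undermines your plan for \eqref{lagaage}. The paper's gauge bound rests on the \emph{quantitative} second-moment estimate $\bbE\big[D^{(q)}_\infty(E)^2\big]\le C\,|E|^d[\log(1/|E|)]^{-1/2}$ (from \eqref{upup} via Fatou), followed by Chebyshev and a union bound over the $2^{d2^n}$ dyadic cells of sidelength $2^{-2^n}$; the exponent $-1/4$ in $\phi$ then arises as half of this $-1/2$. Your sketch invokes only the one-point ballot estimate $\bbQ_x(\tau_\beta>n)\asymp\beta/\sqrt{n}$, which is a first-moment input; combined with a union bound it gives no decay after multiplying by the number of cells. What is actually needed is the \emph{two-point} barrier estimate (both $\bar X_\cdot(x)$ and $\bar X_\cdot(y)$ stay below the line), which after the decoupling time $u=\log(1/|x-y|)$ yields an extra $(t-u)^{-1/2}$ from each trajectory and produces the crucial $[\log(1/|E|)]^{-1/2}$ factor---this is the content of \eqref{tek1} and \eqref{hiphip}. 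Your ``optimize in $\beta$'' idea does not recover this; varying $\beta$ changes the constant but not the scaling in $|E|$.
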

\noindent  Applying Proposition \ref{weako}  to the positive part $D_t$ (it coincides with $D_t$ for  large $t$ by Proposition \ref{tiptop}) we can deduce from the above the weak convergence of the measure $D_t$.

 \begin{theorem}\label{measureconv}
 There exists a measure $D_{\infty}$ such that almost surely, $D_t$ converges weakly to $D_{\infty}$. Furthermore we have
for every $f\in  B_b$, $\bbP[ \bar D_\infty(f)=   D_\infty(f)]=1.$
The measure $D_{\infty}$ has dense support and satisfies \eqref{lagaage} (in particular it has no atoms).
 \end{theorem}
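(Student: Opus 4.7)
The plan is to derive this statement as a packaging corollary of Theorem \ref{derivmartin}, using the general weak-convergence criterion of Proposition \ref{weako}. The only subtlety is that $D_t$ is a priori a signed measure, so I first need to verify that the negative part vanishes for large $t$, uniformly over any fixed compact set.

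For that eventual positivity, I would appeal to Proposition \ref{tiptop}. The Radon--Nikodym density of $D_t$ is $(\sqrt{2d}\,K_t(x)-X_t(x))\,e^{\sqrt{2d}X_t(x)-dK_t(x)}$, so nonnegativity reduces to $X_t(x)\leq \sqrt{2d}\,K_t(x)$ for every $x$ in the ball under consideration. Proposition \ref{tiptop} provides exactly this kind of almost sure asymptotic upper bound: for every $R>0$ there is an a.s.\ finite random time $T=T(R)$ such that
\begin{equation*}
\sup_{x\in B(0,R)}\bigl(X_t(x)-\sqrt{2d}\,K_t(x)\bigr)\leq 0\qquad\text{for all }t\geq T.
\end{equation*}
Consequently the restriction of $D_t$ to any fixed ball is a nonnegative locally finite measure from a random time onward.

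With this in hand, Theorem \ref{derivmartin} supplies the remaining ingredients needed by Proposition \ref{weako}: the a.s.\ setwise convergence $D_t(E)\to \bar D_\infty(E)\in[0,\infty)$ for each fixed $E\in\cB_b$, and the uniform gauge estimate \eqref{lagaage} controlling $\bar D_\infty$ on balls. Proposition \ref{weako} then produces a random locally finite measure $D_\infty$ such that $D_t\to D_\infty$ weakly, almost surely, and such that $D_\infty(f)=\bar D_\infty(f)$ almost surely for every $f\in B_b$. The bound \eqref{lagaage} automatically transfers to $D_\infty$ through this identification.

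The remaining structural properties are then immediate. For the dense support I would invoke the a.s.\ positivity part of Theorem \ref{derivmartin}, applied along a countable basis of open balls of positive Lebesgue measure: on the intersection of the corresponding full-probability events, $D_\infty$ charges every nonempty open set. For the absence of atoms, any singleton $\{x\}$ satisfies $D_\infty(\{x\})\leq C\,\phi(0)=0$ by \eqref{lagaage} since $\phi(0)=0$. I do not anticipate any real obstacle at this stage; the argument is essentially bookkeeping, with all the analytic content already present in Theorem \ref{derivmartin} and the uniform upper bound of Proposition \ref{tiptop}.
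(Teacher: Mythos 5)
Your overall route is the same as the paper's: eventual nonnegativity of $D_t$ on compacts via Proposition \ref{tiptop} (your reduction is correct once you add the contributions of $X_0$ and $K_0$, bounded on $B(0,R)$, and use that $-\frac{\log t}{2\sqrt{2d}}+\frac{4\log\log t}{\sqrt{2d}}\to-\infty$), followed by Proposition \ref{weako} fed with the setwise convergence, positivity and gauge bound of Theorem \ref{derivmartin}; the dense-support and no-atom arguments are fine.

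The gap is in the identification $\bbP[\bar D_\infty(f)=D_\infty(f)]=1$ for all $f\in B_b$. Weak convergence together with setwise convergence of $D_t(E)$ does not by itself identify $D_\infty(E)$ with $\bar D_\infty(E)$ for a fixed Borel set (the boundary of $E$ may carry mass of $D_\infty$, and the gauge bound only rules out atoms, not mass on boundaries of positive Lebesgue measure); this identification is exactly item (iii) of Proposition \ref{weako}, whose hypothesis $\bbE[M_n(E)]\le K\gl(E)$ uniformly in $n$ you never verify — and it actually fails for the measures you apply the proposition to: by Cameron--Martin, $\bbE\bigl[D^+_t(E)\bigr]=\int_E\sqrt{K_t(x)/(2\pi)}\,\dd x\asymp\sqrt{t}\,\gl(E)$, unbounded in $t$ (the signed measure $D_t$ has mean zero, but its positive part does not stay bounded in $L^1$). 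The same issue affects your transfer of \eqref{lagaage} to $D_\infty$, since you derive it ``through this identification''. The repair, using the paper's toolkit, is to apply Proposition \ref{weako} (all three items) to the truncated nonnegative martingale measures $D^{(q)}_t$, for which $\bbE[D^{(q)}_t(E)]=q\,\gl(E)$, obtaining for each $q$ a measure $D^{(q)}_\infty$ with $D^{(q)}_\infty(f)=\bar D^{(q)}_\infty(f)$ a.s.; then invoke Proposition \ref{tiptop} and Corollary \ref{cortiptop} (see \eqref{zob} and \eqref{dqdt}) to get that a.s., for $q\ge q_0(R)$, $D_t$ and $D^{(q)}_t$ have the same limits on $B(0,R)$, both weakly and setwise. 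This yields the stated identification, and \eqref{lagaage} then passes to $D_\infty$ via the countable family of dyadic cubes as in the proof of \eqref{lagaage} itself.
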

\noindent We also prove that $M_t$, when appropriately rescaled, converges to the same limit.

 \begin{theorem}\label{critmartin}
For any fixed $E\in \mathcal B_b$ we have the following convergence in probability, 
 \begin{equation}
  \lim_{t\to \infty} \sqrt{ \frac{\pi t}{2}}M_{t}(E)=
\bar D_{\infty}(E).
 \end{equation}
 Furthermore  $\sqrt{ \frac{\pi t}{2}}M_{t}$ converges weakly in probability to $D_{\infty}$.

\end{theorem}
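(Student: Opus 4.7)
\textbf{Proof plan for Theorem \ref{critmartin}.}
The plan is to reduce the statement to Theorem \ref{derivmartin} by directly relating $\sqrt{\pi t/2}\, M_t(E)$ with $D_t(E)$ after restricting both integrals to a common \emph{ballot event}. For $A > 0$ set
\[
B^x_s := \sqrt{2d}\, K_s(x) - X_s(x), \qquad \cE^A_t(x) := \{B^x_s \ge -A,\ \forall s \in [0,t]\},
\]
and let $M^A_t(E),\, D^A_t(E)$ denote the integrals defining $M_t(E),\, D_t(E)$ with the extra indicator $\ind_{\cE^A_t(x)}$. First I would combine Proposition \ref{tiptop} with a first-moment computation to show that $\sqrt{\pi t/2}(M_t(E) - M^A_t(E)) \to 0$ in probability uniformly in $t$ as $A \to \infty$; the analogous control $D_t(E) - D^A_t(E) \to 0$ is already contained in the proof of Theorem \ref{derivmartin}.

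The comparison on the truncated event is obtained via Girsanov. The Cameron--Martin shift with density $e^{\sqrt{2d}X_t(x_0) - d K_t(x_0,x_0)}$ translates the field by $\sqrt{2d}K_{s \wedge t}(\cdot,x_0)$ and turns $(B^{x_0}_s)_{s \le t}$ into an essentially standard Brownian motion. Using the ballot estimates $\bbP[\tau_{-A} > t] \sim A \sqrt{2/(\pi t)}$ and the optional-stopping identity $\bbE[B_t \ind_{\tau_{-A} > t}] = A\, \bbP[\tau_{-A} \le t] \to A$, one checks that both $\sqrt{\pi t/2}\, \bbE[M^A_t(E)]$ and $\bbE[D^A_t(E)]$ converge to $A\, \gl(E)$, so the first moments match in the limit. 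To upgrade to convergence in probability, set $\gD^A_t(E) := \sqrt{\pi t/2}\, M^A_t(E) - D^A_t(E)$ and evaluate $\bbE[\gD^A_t(E)^2]$ by a two-point Girsanov tilt:
\[
\bbE[\gD^A_t(E)^2] = \iint_{E \times E} e^{2d K_t(x,y)}\, \bbE^{x,y}\!\left[(\sqrt{\pi t/2} - B^x_t)(\sqrt{\pi t/2} - B^y_t) \ind_{\cE^A_t(x) \cap \cE^A_t(y)}\right] \dd x\, \dd y,
\]
where $\bbE^{x,y}$ denotes the joint tilted expectation, under which $(B^x,B^y)$ is a pair of correlated Brownian-motion-like processes decoupling at time $r \approx \log(1/|x-y|)$. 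Splitting the integral across this decorrelation scale and exploiting the first-order cancellation $\bbE[(\sqrt{\pi t/2} - B_t)\ind_{\tau_{-A} > t}] = o\bigl(\bbE[\ind_{\tau_{-A} > t}]\bigr)$, one aims to show $\bbE[\gD^A_t(E)^2] \to 0$ as $t \to \infty$ for each fixed $A$.

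Combining the $L^2$ convergence $\gD^A_t(E) \to 0$ with a truncated version of Theorem \ref{derivmartin} (giving $D^A_t(E) \to D^A_\infty(E)$ a.s.\ and $D^A_\infty(E) \to \bar D_\infty(E)$ in probability as $A \to \infty$) and then removing the truncation via the first step yields the convergence in probability $\sqrt{\pi t/2}\, M_t(E) \to \bar D_\infty(E)$. The weak convergence in probability of the random measures $\sqrt{\pi t/2}\, M_t$ to $D_\infty$ then follows by applying Proposition \ref{weako} to the nonnegative measures $\sqrt{\pi t/2}\, M_t$, in direct analogy with the deduction of Theorem \ref{measureconv} from Theorem \ref{derivmartin}. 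The principal obstacle will be the two-point $L^2$ bound: the weight $e^{2d K_t(x,y)}$ diverges like $|x-y|^{-2d}$ near the diagonal, so all decay in $t$ must come from the bivariate ballot indicator together with the first-order cancellation of $\sqrt{\pi t/2} - B_t$ in the conditioned Brownian motion, and making both quantitative uniformly in the decorrelation scale $r$ is the delicate part of the argument.
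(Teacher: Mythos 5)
Your strategy is sound and the statements you assert along the way are true, but at its core it takes a genuinely different route from the paper, and the decisive step is only announced, not proved. The paper never compares $\sqrt{\pi t/2}\,M^{(q)}_t$ with $D^{(q)}_t$ at the \emph{same} time via a two-point tilt. Instead (Lemma \ref{lecondit} and Proposition \ref{dor}) it conditions on $\cF_s$ for a \emph{fixed} intermediate time $s$: a one-point Cameron--Martin computation gives $\bbE[\sqrt{\pi(t-s)/2}\,M^{(q)}_t\mid\cF_s]\to D^{(q)}_s$ in $L^2$ (via $\mathfrak g_{t-s}(u)\to u_+$), and the remaining fluctuation term $\tfrac{\pi t}{2}\bbE\big[|M^{(q)}_t-\bbE[M^{(q)}_t\mid\cF_s]|^2\big]$ is handled with \emph{no cancellation at all}: for $|x-y|\ge e^{-s}$ the cross terms vanish identically by conditional independence of the increments, and the region $|x-y|\le e^{-s}$ is absorbed by the plain two-point bound \eqref{whendelta}. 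The limit $t\to\infty$ then $s\to\infty$ finishes the proof, and the outer layers of your plan (removal of the truncation via Proposition \ref{tiptop}, convergence of the truncated derivative martingale, Proposition \ref{weako} for the measures) coincide with the paper's.

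What your route buys is conceptual directness (a single $L^2$ estimate $\bbE[\gD^A_t(E)^2]\to0$), but that estimate is exactly the heavy computation the paper's conditioning device is designed to avoid, and in your sketch it remains an aim ("one aims to show"). Concretely: under the two-point tilt each process also acquires a downward drift $-\sqrt{2d}\,K_{s\wedge t}(x,y)$ toward the barrier (this is what turns the prefactor $e^{2dK_t(x,y)}$ into an effective $e^{d\,r}$), the weights $(\sqrt{\pi t/2}-B^x_t)(\sqrt{\pi t/2}-B^y_t)$ are not sign-definite, and the first-order cancellation must be quantified uniformly in the decorrelation scale $r$ up to order $t$; near the diagonal ($r$ comparable to $t$) there is no decoupled phase to exploit, so one must switch to absolute-value bounds there and control a product of order $t$ against the bivariate ballot probability, i.e.\ redo all the estimates of Appendix \ref{teklesproofs} with extra polynomial weights and uniform error terms. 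I believe this can be pushed through with Lemma \ref{stupid} and computations as in \eqref{joke}, but as written there is a gap precisely at the heart of the argument. The cheapest repair is to insert the paper's fixed intermediate scale: condition on $\cF_s$ and split at $|x-y|=e^{-s}$, at which point your two-point cancellation degenerates into the one-point identity of Lemma \ref{lecondit} plus \eqref{whendelta}, and no bivariate cancellation estimate is needed.
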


\subsection{Convergence of subcritical chaos when $\alpha$ tends to $\sqrt{2d}$}

The last result we wish to present is the convergence of subcritical chaos, when properly renormalized, towards
the same limit $D_{\infty}$ when $\alpha \uparrow \sqrt{2d}$.
Let us recall the definition of the subcritical chaos $M^{\alpha}$.

\begin{theorema}[{\cite[Theorem 1]{Natele}}]
\label{rappel}
Let $X$ be a $\log$-correlated field (with covariance $K$ of the form \eqref{fourme})
 and  $M^{\alpha}_\gep$ be defined by \eqref{forboundedfunctions}. There exists a random measure $M^{\alpha}$ such that for every $E\in \mathcal B_b$ the following convergence holds in $L^1$ (for any choice of mollifier)
 \begin{equation}
 \lim_{\gep \to 0} M^{\alpha}_\gep(E)=M^{\alpha}(E).
 \end{equation}
 If $K$ is an almost-star scale invariant kernel, and $X_t$ is the martingale approximation of $X$ then  for every $E\in \mathcal B_b$, the following convergence holds in $L^1$ and almost surely
 \begin{equation}
   \lim_{t\to \infty} M^{\alpha}_t(E)=M^{\alpha}(E).
 \end{equation}

\end{theorema}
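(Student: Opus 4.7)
The plan is to tackle the two approximations in turn, beginning with the martingale approximation where the independent-increment structure is immediate. First, I observe that for any nonnegative $f \in B_b$ the process $M^{\alpha}_t(f)$ is a nonnegative $(\cF_t)$-martingale: since $\bbE[X_s(x)X_t(y)]=K_{s\wedge t}(x,y)$, the increment $X_t(x)-X_s(x)$ is independent of $\cF_s$ and Gaussian with variance $K_t(x)-K_s(x)$, so $\bbE[e^{\alpha(X_t(x)-X_s(x))-\frac{\alpha^2}{2}(K_t(x)-K_s(x))}\mid \cF_s]=1$ and Fubini yields the martingale property. Doob's theorem then supplies an almost sure limit $M^{\alpha}(f)\in[0,\infty)$, with $\bbE[M^{\alpha}(f)]\le \int f$ by Fatou.

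The crux is upgrading almost sure convergence to $L^1$ convergence with $\bbE[M^{\alpha}(E)]=\gl(E)$. For $\alpha<\sqrt{d}$ the estimate $\sup_t \bbE[M^{\alpha}_t(E)^2]\le \int_{E^2} e^{\alpha^2 K_t(x,y)}\dd x\dd y \le C\int_{E^2}|x-y|^{-\alpha^2}\dd x\dd y<\infty$ is immediate. For $\alpha\in[\sqrt{d},\sqrt{2d})$ I would carry out the standard truncation. Fix $\beta\in(\alpha,\sqrt{2d})$, define the good event $A^{\beta}_t(x):=\{X_s(x)\le \beta s,\,\forall s\le t\}$, and set
\begin{equation*}
M^{\alpha,\beta}_t(E):=\int_E \ind_{A^{\beta}_t(x)}\, e^{\alpha X_t(x)-\frac{\alpha^2}{2}K_t(x)}\dd x.
\end{equation*}
Expanding the square and performing a Cameron--Martin shift of $X$ along $\alpha K(x,\cdot)+\alpha K(y,\cdot)$ factorises $\bbE[M^{\alpha,\beta}_t(E)^2]$ into $e^{\alpha^2 K_t(x,y)}$ times the probability of a shifted good event; the latter, by a union bound and Gaussian tail estimates on drifted Brownian-like excursions, decays like a positive power of $|x-y|$ large enough that the resulting integrand is integrable on $E\times E$ once $\beta-\alpha$ is chosen small. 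In parallel, $\bbE[M^{\alpha}_t(E)-M^{\alpha,\beta}_t(E)]=\int_E \bbP[(A^{\beta}_t(x))^c]\dd x$ tends to $0$ as $\beta\uparrow\sqrt{2d}$ uniformly in $t$, by the Gaussian tail on $\sup_{s\le t}(X_s(x)-\beta s)$. Combining, the family $(M^{\alpha}_t(E))_{t\ge 0}$ is uniformly integrable, hence converges in $L^1$ with full mean.

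For the convolution approximation $M^{\alpha}_\gep$, the plan is to bridge to the martingale setting via Kahane's convexity inequality. Taking $\gep=e^{-t}$, the covariances $K_\gep(x,y)$ and $K_t(x,y)$ agree on $E\times E$ up to a uniformly bounded continuous function (the contribution of $K_0$, the continuous part $L$, and the short-scale portion lost in the convolution). Kahane's inequality then controls $\bbE[F(M^{\alpha}_\gep(E))]$ in terms of $\bbE[F(M^{\alpha}_{t\pm C}(E))]$ for convex $F$ with polynomial growth, so the uniform moment bound transfers to $\gep$; joint convergence in distribution of $X_\gep$ to $X$ then forces $M^{\alpha}_\gep(E)\to M^{\alpha}(E)$ in $L^1$, and a second Kahane comparison between two mollifiers yields mollifier-independence.

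The main obstacle is the second-moment estimate for $M^{\alpha,\beta}_t(E)$: after the Cameron--Martin shift one must analyse the joint good event under two correlated drifts $\alpha K(x,\cdot)$ and $\alpha K(y,\cdot)$ and extract a power of $|x-y|$ strong enough to beat the $e^{\alpha^2 K_t(x,y)}\asymp |x-y|^{-\alpha^2}$ factor. This is essentially a barrier estimate for two Gaussian walks that merge around time $\log|x-y|^{-1}$, and is precisely the place where the subcritical assumption $\alpha<\sqrt{2d}$ is genuinely used; no choice of $\beta$ produces an integrable integrand at $\alpha=\sqrt{2d}$, consistent with the degeneracy $M^{\sqrt{2d}}_t\to 0$ addressed in the main body of the paper.
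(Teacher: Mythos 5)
A point of order first: the paper does not prove Theorem \ref{rappel} at all --- it is recalled from \cite{Natele} (see also \cite{Shamov}) and used as an input --- so your attempt can only be compared with that cited proof. Your martingale half follows the same standard route (barrier truncation, Cameron--Martin, second moment of the truncated measure, uniform integrability), and the $L^2$ bookkeeping with $\beta-\alpha$ small is the right computation. But the truncation as written is defective: the event $A^{\beta}_t(x)=\{X_s(x)\le \beta s,\ \forall s\le t\}$ carries no additive constant, so under the tilted law (drift $\alpha s$, plus the mean shift of the $X_0$ part) its complement has probability bounded away from $0$ uniformly in $\beta<\sqrt{2d}$; consequently your claim that $\bbE[M^{\alpha}_t(E)-M^{\alpha,\beta}_t(E)]\to 0$ as $\beta\uparrow\sqrt{2d}$ uniformly in $t$ is false, and it is moreover in tension with the $L^2$ step, which needs $\beta-\alpha$ small rather than $\beta$ near $\sqrt{2d}$. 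The standard repair (used in \cite{Natele} and mirrored by this paper's own truncation $A^{(q)}_t(x)$ in \eqref{aqx}) is a barrier $\beta s+q$ with $\beta>\alpha$ \emph{fixed} and $q\to\infty$: the first-moment defect is then $O(e^{-2(\beta-\alpha)q})$ uniformly in $t$, and uniform integrability follows.

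The genuine gap is in the convolution half. Kahane's convexity inequality compares expectations of convex functionals for two Gaussian fields with pointwise-comparable covariances; it can transfer moment bounds and uniform integrability from $M^{\alpha}_t$ to $M^{\alpha}_{\gep}$, but it cannot identify the limit, and the sentence ``joint convergence in distribution of $X_{\gep}$ to $X$ then forces $M^{\alpha}_{\gep}(E)\to M^{\alpha}(E)$ in $L^1$'' is a non sequitur: the chaos is not a continuous functional of the field, and proving that the mollified approximations converge, to a mollifier-independent limit that agrees with the martingale limit, is precisely the content of \cite{Natele,Shamov}, not a soft consequence of tightness. To close this you would need a genuine argument, e.g.\ show that after truncation $\bbE\left[\left(M^{\alpha,\mathrm{good}}_{\gep}(E)-M^{\alpha,\mathrm{good}}_{\gep'}(E)\right)^2\right]\to 0$ as $\gep,\gep'\to 0$ (Cauchy in $L^2$ across scales and across mollifiers), or couple $X_{\gep}$ with the martingale decomposition and estimate $\bbE\left[\left|M^{\alpha}_{\gep}(E)-M^{\alpha}_t(E)\right|\right]$ directly --- compare how the present paper handles the critical case by conditioning $M^{(q)}_{\gep}$ on $\cF_s$ in Lemma \ref{lecondit2}. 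As it stands, the mollified statement and the mollifier-independence are asserted rather than proved.
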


\noindent We can now formulate our result. In the statement below  $M^{\alpha}$ and $M'$ are the limiting measures obtained in Theorems \ref{rappel} and \ref{mainresprim} respectively. The notation $\alpha \uparrow \sqrt{2d}$  means that the limit is taken with $\alpha<\sqrt{2d}$.

\begin{theorem}\label{ziklob}
If $K$ is an almost-star scale invariant kernel and $X$ is a field with covariance $K$, then
for every $E\in \mathcal B_b$ we have the following convergence in probability
 \begin{equation}
 \lim_{\alpha  \uparrow\sqrt{2d}} \frac{M^{\alpha}(E)}{\sqrt{2d}-\alpha}=  2M'(E).
 \end{equation}
\end{theorem}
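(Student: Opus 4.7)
Setting $\delta := \sqrt{2d} - \alpha$ and $Y_t(x) := \sqrt{2d}\, K_t(x) - X_t(x)$, a direct algebraic manipulation of the exponent (using $\alpha = \sqrt{2d} - \delta$) yields the identity
\begin{equation}\label{eq:keyziklob}
M^\alpha_t(E) = \int_E e^{\delta Y_t(x) - \frac{\delta^2}{2} K_t(x)}\, M_t(dx),
\end{equation}
valid for every $t > 0$. Since $M^\alpha_t(E) \to M^\alpha(E)$ in $L^1$ by Theorem \ref{rappel}, the strategy is to send $t = t(\delta) \to \infty$ and $\delta \to 0$ jointly. The correct scaling has $s(\delta) := \delta^2 t(\delta) \to \infty$ slowly; we take for instance $t(\delta) = \delta^{-2}\log\log(1/\delta)$, so that $s(\delta) = \log\log(1/\delta)$. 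The argument then splits into (a) the tail reduction $M^\alpha(E) - M^\alpha_{t(\delta)}(E) = o_{\bbP}(\delta)$, handled by a standard $L^2$ estimate in the subcritical regime, and (b) identifying the leading order of \eqref{eq:keyziklob} at $t = t(\delta)$.

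\textbf{The factor of $2$ via Gaussian computation.} Let $c := \sqrt{s(\delta)}$ and $R_t(x) := Y_t(x)/\sqrt{t}$. Using $K_t(x) = t + O(1)$, \eqref{eq:keyziklob} rewrites as
$$\frac{M^\alpha_{t(\delta)}(E)}{\delta} \;=\; \sqrt{\tfrac{2}{\pi s(\delta)}}\int_E e^{c R_t(x) - c^2/2}\, \sqrt{\tfrac{\pi t}{2}}\, M_t(dx) + o_{\bbP}(1),$$
and by Theorem \ref{critmartin} the measure $\sqrt{\pi t/2}\, M_t$ converges weakly in probability to $M'$. The Gaussian input is the following distributional fact: on the ``surviving portion'' of $M_t$ (the portion that, after Seneta--Heyde rescaling, produces $M'$), $R_t$ is asymptotically Rayleigh-distributed, with density $r\, e^{-r^2/2}$ on $[0,\infty)$. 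This is the classical reflection-principle computation applied to the Brownian motion in $s$ defined, for fixed $x$, by $Y_s(x)$ conditioned to stay positive. The elementary integral
$$\int_0^\infty r\, e^{cr - c^2/2 - r^2/2}\, dr \;=\; e^{-c^2/2} + c\sqrt{2\pi}\,\Phi(c) \;\xrightarrow[c \to \infty]{}\; c\sqrt{2\pi},$$
multiplied by the prefactor $\sqrt{2/(\pi c^2)}$, gives exactly $2$, yielding the target $2 M'(E)$.

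\textbf{Main obstacle.} The chief difficulty is passing to the limit under the unbounded random multiplier $e^{c R_t - c^2/2}$ against the only weakly-converging random measure $\sqrt{\pi t/2}\, M_t$. What is needed is a size-biased refinement of Theorem \ref{critmartin}: for every Borel $B \subseteq \bbR$,
$$\sqrt{\tfrac{\pi t}{2}}\, \ind_{\{R_t \in B\}}\, M_t \;\longrightarrow\; \Big(\int_B r\, e^{-r^2/2}\, \ind_{\{r \geq 0\}}\, dr\Big)\cdot M' \quad \text{weakly in probability}.$$
This can be attacked by a second-moment argument parallel to the one underlying Theorem \ref{critmartin}, exploiting that in the almost star-scale invariant setting the macroscopic variable $Y_t(x)$ is asymptotically independent of the fine-scale fluctuations of $X - X_t$ which encode $M'$. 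Tails from large $R_t$ are dominated by Gaussian concentration, while the contribution from $\{R_t < 0\}$ is shown to be $o_{\bbP}(\delta)$ by combining the bound $\sqrt{\pi t/2}\,M_t(E) = O_{\bbP}(1)$ with Chebyshev's inequality.
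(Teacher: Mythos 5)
Your identity $M^{\alpha}_t(E)=\int_E e^{\delta Y_t(x)-\frac{\delta^2}{2}K_t(x)}M_t(\dd x)$ and the Rayleigh heuristic do reproduce the factor $2$ correctly, but as written the argument has two genuine gaps, and it is a different (and here incomplete) route from the paper's. First, step (a): you claim $M^{\alpha}(E)-M^{\alpha}_{t(\delta)}(E)=o_{\bbP}(\delta)$ by ``a standard $L^2$ estimate in the subcritical regime''. There is no such estimate: for $\alpha\ge \sqrt{d}$ the subcritical chaos $M^{\alpha}(E)$ has infinite second moment, and you are precisely in the regime $\alpha\uparrow\sqrt{2d}$. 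Any quantitative control of the tail $t\ge t(\delta)$ at precision $o(\delta)$ requires first removing the high points of the field; this is exactly the role of the barrier truncation $A^{(q)}_t(x)$ in the paper, after which the second moment carries the crucial factor $(\sqrt{2d}-\alpha)^2$ (estimate \eqref{tek4}). Your proposal contains no truncation, so this step cannot be carried out as stated. Second, step (b) hinges on the ``size-biased refinement'' of Theorem \ref{critmartin}, namely that $\sqrt{\pi t/2}\,\ind_{\{Y_t/\sqrt t\in B\}}M_t$ converges to $\bigl(\int_B r e^{-r^2/2}\ind_{\{r\ge 0\}}\dd r\bigr)M'$. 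You state it as ``what is needed'' and sketch that it ``can be attacked'' by a second-moment argument, but you do not prove it; it is a theorem of at least the same depth as Theorem \ref{critmartin} itself. Moreover, even granted for each fixed Borel $B$, it would not suffice: you must integrate the unbounded, $\delta$-dependent multiplier $e^{cR_t-c^2/2}$ with $c=c(\delta)\to\infty$ against a measure that only converges weakly in probability, which requires uniform-integrability/tail control in the joint limit $(\delta\to 0, t=t(\delta)\to\infty)$, not addressed beyond one sentence. The supporting claim that $Y_t(x)$ is ``asymptotically independent of the fine-scale fluctuations of $X-X_t$ which encode $M'$'' is also not accurate: $M'=D_{\infty}$ is built from all scales and depends on $Y_t$ itself; the decoupling that actually works (and is used in the paper) is conditional independence, given $\cF_s$ at a fixed scale $s$, of the increments at points with $|x-y|\ge e^{-s}$.

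For comparison, the paper avoids both issues by proving the statement for the truncated supermartingale $\bar M^{\alpha,(q)}_{\infty}$ and then removing the truncation via Proposition \ref{tiptop}. The factor $2$ arises not from a Rayleigh limit but from the elementary barrier probability $\bbP[\forall t>0,\ B_t\le at+b]=1-e^{-2ab}$ in Lemma \ref{stupid}: conditioning on $\cF_s$ and applying Cameron--Martin gives $\bbE[\bar M^{\alpha,(q)}_{\infty}(E)\,|\,\cF_s]/(\sqrt{2d}-\alpha)\to 2D^{(q)}_s(E)$ (Lemma \ref{conditorei}), and the $L^2$ distance between $\bar M^{\alpha,(q)}_{\infty}/(\sqrt{2d}-\alpha)$ and this conditional expectation is controlled by the short-distance second-moment bound \eqref{whendelta}. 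If you want to salvage your route, you would need to (i) introduce the barrier truncation to make the $L^2$ bookkeeping possible, and (ii) actually prove the size-biased refinement with quantitative tails — at which point the argument is no shorter than the paper's.
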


\subsection{Review of the literature and organization of the paper}

The results presented above are not new. Theorems \ref{derivmartin}-\ref{measureconv} correspond to \cite[Theorem 4]{MR3262492}
Theorem \ref{critmartin} corresponds to  \cite[Theorem 5]{MR3215583}, Theorems  \ref{mainres}-\ref{mainresprim} can be obtained by combining this last result with 
\cite[Theorem 1.1 and Theorem 4.4]{MR3613704} and Theorem \ref{ziklob} corresponds to \cite[Theorem 3.1]{MR4396197}.
The aim of this paper is to expose shorter and simpler proofs, with a presentation as self-contained and elementary 
as possible.

\medskip

In Section \ref{prelim}, we present a variety of technical results which are needed for our proofs. This includes standard probability textbook results, as well as technical estimates which are specific to our problem.
The proof of the latter are displayed in Appendix \ref{teklesproofs}.
Sections \ref{dddt}, \ref{mmmt}, \ref{mmmgep} and \ref{subkrit} respectively  display the proofs of Theorems \ref{derivmartin}, \ref{critmartin}, \ref{mainres} and \ref{ziklob}.
Appendix \ref{weakconvmeas} explains how the convergence result for the measure of a fixed Borel set implies the weak convergence of the measure. In 
Appendix \ref{sobolev}, we replicate an argument from \cite{junnila2019}, to extend Theorems \ref{mainres}-\ref{mainresprim} and \ref{ziklob} to regular $\log$-correlated 
fields on arbitrary domains.

\medskip

Our starting point to prove the convergence of $D_t$ and $M^{}_t$ is the same as in \cite{MR3262492, MR3215583}, 
we consider truncated versions  $D^{(q)}_t$ and $M^{(q)}_t$ of $M_t$ and $D_t$  (introduced in Section \ref{trunkz}) which 
ignore the contribution of $X$ with atypically high values. 
We extend this approach, defining also a truncated version $M^{(q)}_\gep$ for  $M_\gep$.
Proposition \ref{tiptop} justifies this procedure by showing that for large $q$, with high probability, 
$M^{(q)}_t$ coincides with $M_t$. We present in Appendix \ref{tekos1} a  proof of Proposition \ref{tiptop}. It relies on standard Gaussian tools  and classic ideas developed for the study of the maximum of $\log$-correlated fields  and branching random walk see for e.g.\ \cite{acosta, AidekonShisimple}, but is much shorter than what has appeared so far in the literature.

\medskip

The next steps are based on a couple of novel ideas. 
The key point is Lemma \ref{lecondit} which relates $M^{(q)}_t$ to $D^{(q)}_t$ using conditional expectation. First, this allows to transfer second moment estimates from $M^{(q)}_t$ to $D^{(q)}_t$,
to show that $D^{(q)}_t$ is bounded in $L^2$ (see the proof of Proposition \ref{convdq} in Section \ref{dddt}) and hence that $D_t$ converges.
More importantly, with very little additional efforts, we can use this show to that $\sqrt{\pi t/2}M^{(q)}_t$  
is close to $D^{(q)}_t$ in $L^2$ for large values of $t$ and hence that both sequences 
converge in probability to the same limit, see Section \ref{mmmt}.
In Section \ref{mmmgep} and \ref{subkrit} using an analogue strategy (using Lemma \ref{lecondit2}/\ref{conditorei} instead of Lemma \ref{lecondit}), we prove that  $M_\gep$ and $M^{\alpha}$ (appropriately normalized) converge to  $D_\infty$.

\subsection{Critical chaos with a different reference measure}\label{deesk}

In the definition of $M_\gep$, $M_t$ and $D_t$ \eqref{forboundedfunctions}, \eqref{defalfat} and \eqref{laderivvv}, it is possible replace the Lebesgue measure on $\bbR^d$ with an arbitrary locally finite measure $\mu$ by setting 
\begin{equation}
M_\gep := \int_{\bbR^d} f(x) e^{\sqrt{2d} X_{\gep}(x)- d K_{\gep}(x) } \mu( \dd  x).
\end{equation}
(and similarly for $M_t$ and $D_t$), and ask whether the sequences  $\sqrt{\pi\log (1/\gep)/2}M_\gep$  $\sqrt{\pi t/2} M_t$ and $D_t$ converge weakly in that setup.
It turns out that the method we exposed below can give an answer to the question slightly beyond the case of Lebesgue measure.
More precisely, all our proofs (except that of \eqref{gageli} where one needs to modify the gage function $\varphi$) extend without modification to the case where $\mu$ is a nonnegative measure that satisfies
\begin{equation}\label{lacondition}
 \forall x\in \bbR^d, \quad  \int_{B(x,e^{-e})^2} \frac{\left(\log \log \frac{1}{ |x-y|}\right)\mu(\dd x) \mu(\dd y)  }{|x-y|^d \left(\log  \frac 1 {|x-y|}\right)^{3/2}}<\infty,
\end{equation}
with $\gamma\in (0,1/2)$.
The above condition originates from Proposition \ref{tiptop}, more precisely it is required to deduce \eqref{whendelta} from \eqref{tek1}-\eqref{tek4} (the necessity for the $\log \log$ term in the numerator comes from the second term in  \eqref{joke3})

\medskip

Not that  while \eqref{lacondition} allows to consider measures that are singular w.r.t.\ to Lebesgue measure, it still requires that the measure to be in a certain sense  almost as  ``spread out'' as the Lebesgue measure. It is worth noting for instance that \eqref{lacondition} is not satisfied if $d=2$ and $\mu=\mu_T$ is taken to be the occupation measure of a two dimensional Brownian Motion $(B_t)_{t\in [0,T]}$. Defining the critical GMC over this occupation measure is the first step of the construction of Liouville Brownian Motion at criticality \cite{lioucrit}.
%
%
%

\medskip

\noindent {\bf Acknowledgement: } The author is gratefull to Ellen Powell and Rémi Rhodes for their insightful comments on a first draft of the manuscript. This work was supported by a productivity grant from CNPq and a JCNE grant from FAPERJ.

\section{Technical preliminaries}\label{prelim}

\subsection{Gaussian tools}

Let us recall here a couple of basic results concerning Gaussian processes.
Firstly, the Cameron-Martin formula, that indicates how the distribution of a Gaussian field is affected by an exponential tilt.

\begin{proposition}\label{cameronmartinpro}

 Let $(Y(z))_{z\in \cZ}$ be a centered Gaussian field indexed by a set $\cZ$. We let  $H$ denote its covariance and $\bP$ denote its law. 
Given $z_0\in \cZ$ let us define $\tilde \bP_{z_0}$ the probability obtained from $\bP$ after a tilt by $Y(z_0)$ that is
\begin{equation}
 \frac{\dd \tilde \bP_{z_0}}{\dd \bP}:= e^{Y(z_0)- \frac{1}{2} H(z_0,z_0)}
\end{equation}
Under $\tilde \bP_{z_0}$, $Y$ is a Gaussian field with covariance $H$,
and mean  $\tilde \bbE_{z_0}[ Y(z)]=H(z,z_0).$
\end{proposition}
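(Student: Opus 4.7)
The plan is to verify that the finite-dimensional distributions of $Y$ under $\tilde{\bP}_{z_0}$ are those of a Gaussian field with covariance $H$ and mean function $z\mapsto H(z,z_0)$. Since a Gaussian field is determined by its finite-dimensional distributions, and each of these is determined by its moment generating function, it suffices to compute
\begin{equation*}
\tilde{\bE}_{z_0}\!\left[\exp\Big(\sum_{i=1}^n \lambda_i Y(z_i)\Big)\right]
\end{equation*}
for an arbitrary finite collection $z_1,\dots,z_n\in\cZ$ and $\lambda_1,\dots,\lambda_n\in\bbR$, and to check it matches the MGF of the advertised Gaussian vector.

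By the definition of the tilt, the above expectation equals
\begin{equation*}
\bE\!\left[\exp\Big(Y(z_0) + \sum_i \lambda_i Y(z_i) - \tfrac{1}{2}H(z_0,z_0)\Big)\right].
\end{equation*}
Under $\bP$ the variable $W := Y(z_0)+\sum_i \lambda_i Y(z_i)$ is a centered Gaussian with variance $H(z_0,z_0)+2\sum_i \lambda_i H(z_0,z_i)+\sum_{i,j}\lambda_i\lambda_j H(z_i,z_j)$, so applying the Gaussian MGF identity $\bE[e^W]=e^{\Var(W)/2}$ and cancelling the $\tfrac{1}{2}H(z_0,z_0)$ term yields
\begin{equation*}
\tilde{\bE}_{z_0}\!\left[e^{\sum_i \lambda_i Y(z_i)}\right] = \exp\!\Big(\sum_i \lambda_i H(z_0,z_i) + \tfrac{1}{2}\sum_{i,j}\lambda_i\lambda_j H(z_i,z_j)\Big),
\end{equation*}
which is exactly the MGF of a Gaussian vector with mean $(H(z_i,z_0))_i$ and covariance $(H(z_i,z_j))_{i,j}$.

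There is no real obstacle here: this is essentially a one-line calculation once the MGF formula for a Gaussian random variable is in hand, plus the elementary fact that a (finite) real entire MGF uniquely determines a probability distribution on $\bbR^n$. The conclusion on the full field follows because the above identification holds for every finite subcollection of indices.
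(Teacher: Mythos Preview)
Your proof is correct; this is the standard moment-generating-function verification of the Cameron--Martin shift. The paper does not supply a proof of this proposition at all---it is simply recalled as a basic Gaussian fact---so there is nothing to compare against, and your argument is exactly the kind of elementary justification one would expect.
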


\noindent Secondly, bounds on the probability for a Brownian Motion to remain below a  threshold.
The first two estimates are direct consequences of the reflection principle, the third one is obtained by applying  the optional stopping Theorem to the martingale $e^{2aB_t-2a^2 t}$. 

\begin{lemma}\label{stupid}
 
 Let $B$ be a standard Brownian Motion and let $\bP$ denote its distribution. Setting $ \mathfrak g_{t}(a):= \int^{u_+}_0 e^{-\frac{z^2}{2t}} \dd z,$
 we have
 \begin{equation}
  \bP\left[ \sup_{s\in[0,t]} B_s \le a \right]=\sqrt{ \frac{2\pi}{t}}\mathfrak g_{t}(a)\le \sqrt{\frac{2\pi}{t}}a.
 \end{equation}
If $\bP_{a}[ \cdot | \ B_t=b]$ denote the distribution of the Brownian bridge of length $t$ starting from $a$ and ending at $b$
with $ab\ge 0$ then
\begin{equation}\label{bridjoux}
 \bP_{a}[ \forall s\in[0,t], \ B_s\ge 0 \  | \ B_t=b]= \left(1-e^{\frac{2ab}{t}}\right) \le  1\wedge \frac{2ab}{t}. 
\end{equation}
If $a, b> 0$ then,
\begin{equation}
 \bP[\forall t>0, \ B_t\le at+b]=1-e^{-2ab}\le 2ab.
\end{equation}

\end{lemma}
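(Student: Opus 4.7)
The three estimates are classical and the plan is to treat each one with the tool suggested in the paper. For the first bound, I would apply the reflection principle. Let $T_a := \inf\{s \ge 0 : B_s = a\}$; for $a \ge 0$ the reflected path $(B_s)_{s \le T_a} \cup (2a - B_s)_{s > T_a}$ has the same law as $B$, so $\bP[T_a \le t, B_t < a] = \bP[T_a \le t, B_t > a] = \bP[B_t > a]$. Adding the two contributions gives $\bP[\sup_{s \in [0,t]} B_s \ge a] = 2 \bP[B_t \ge a]$, whence
\begin{equation*}
\bP\bigl[\sup_{s \in [0,t]} B_s \le a\bigr] = 2\bP[0 \le B_t \le a] = \sqrt{\tfrac{2}{\pi t}} \int_0^a e^{-z^2/(2t)} \dd z,
\end{equation*}
and the stated linear upper bound follows from $e^{-z^2/(2t)} \le 1$.

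For the bridge estimate, I would reflect about the level $0$ a Brownian motion started at $a \ge 0$. For $b \ge 0$, each path from $a$ to $b$ that touches $0$ corresponds bijectively to a path from $-a$ to $b$, so
\begin{equation*}
\bP_a\bigl[B_t \in \dd b,\ \min_{s \in [0,t]} B_s \le 0\bigr] = \bP_{-a}[B_t \in \dd b] = \tfrac{1}{\sqrt{2\pi t}} e^{-(b+a)^2/(2t)} \dd b.
\end{equation*}
Subtracting from the unconditional density $\tfrac{1}{\sqrt{2\pi t}} e^{-(b-a)^2/(2t)}$ and dividing yields $\bP_a[\min_{s \in [0,t]} B_s \ge 0 \mid B_t = b] = 1 - e^{-2ab/t}$ (I read the $+$ in the excerpt as a typo). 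The elementary inequality $1 - e^{-x} \le \min(1, x)$ for $x \ge 0$ delivers the upper bound.

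For the third estimate, I would apply optional stopping to the exponential martingale $M_t := e^{2aB_t - 2a^2 t}$ at $\tau := \inf\{t > 0 : B_t = at + b\}$. On the event $\{\tau < \infty\}$, $M_\tau = e^{2a(a\tau + b) - 2a^2 \tau} = e^{2ab}$; on the event $\{\tau = \infty\}$, $B_t < at + b$ for all $t$ and the law of large numbers for Brownian motion gives $B_t/t \to 0$, hence $M_t \to 0$. Since $M_{\tau \wedge t} \le e^{2ab}$, dominated convergence applied to $\bbE[M_{\tau \wedge t}] = 1$ yields $e^{2ab}\bP[\tau < \infty] = 1$, so $\bP[\forall t > 0,\ B_t \le at + b] = 1 - e^{-2ab}$, and once more $1 - e^{-x} \le x$ closes the inequality.

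None of the three arguments presents a genuine obstacle; the only point requiring care is the justification of dominated convergence in the optional stopping step, which needs both the uniform upper bound on $M_{\tau \wedge t}$ and the almost sure vanishing of $M_t$ on $\{\tau = \infty\}$. Everything else is essentially textbook.
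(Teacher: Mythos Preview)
Your proposal is correct and follows exactly the approach the paper indicates: the paper does not give a full proof but only remarks that the first two estimates follow from the reflection principle and the third from optional stopping applied to $e^{2aB_t-2a^2t}$, and you have filled in precisely those details. You also correctly spotted the sign typo in \eqref{bridjoux} (the exponent should be $-2ab/t$), and your normalisation $\sqrt{2/(\pi t)}$ agrees with how the bound is actually used later in the paper (e.g.\ in the proof of Corollary~\ref{cortiptop}), so the $\sqrt{2\pi/t}$ in the displayed statement is likewise a typo.
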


\subsection{Simple observations concerning our Gaussian fields}\label{considerations}

We compile here a list  of useful facts concerning the covariance and dependence structure of $X_t$ and $X_{\gep}$.
We introduce a third field which appears when considering conditional expectations of $M_{\gep}$. We let $X_{t,\gep}$ denote the mollification of $X_t$
\begin{equation}
 X_{t,\gep}(y)= \int_{\bbR^d} X_t(y)\theta_{\gep}(x-y) \dd y= \bbE\left[ X_{\gep} \ | \ \cF_t \right].
\end{equation}
By construction the increments of $X_t$ are orthogonal in $L^2$ and hence independent. 
Setting
\begin{equation}\label{barxx}
 X^{(s)}_t(x)= X_{t+s}(x)-X_s(x) \text{ and } \bar X_t(x)= X^{(0)}_t(x),
\end{equation}
the field $(\bar X_{t}(x))_{x\in \bbR^d, t\ge 0}$ has covariance $\bar K_{s\wedge t}(x,y)$ (recall \eqref{kkttt}). 
In particular, since $\bar K_t(x,x)=t$, this implies that for every fixed $x\in \bbR^d$ and $s\ge 0$,
$t\mapsto  X^{(s)}_t(x)$ is a standard Brownian Motion independent of $\mathcal F_s$.

\medskip

\noindent Furthermore, recalling \eqref{defqt}, we necessarily have   $t' \in [t,t+\eta_1/\eta_2]$. Our assumption that $\kappa$ is supported on $B(0,1)$ implies that $Q_u(x,y)=0$ if  $|x-y|\ge e^{-u}$.
In particular  if $|x-y|\ge e^{-s}$, 
  $X^{(s)}_{\cdot}(x)$ and  $X^{(s)}_{\cdot}(y)$ are independent Brownian Motions.
  The increments of the convoluted fields also have a finite range dependence for the same reason and $(X_{\gep}-X_{s,\gep})(x)$ is independent of  $(X_{\gep}-X_{s,\gep})(y)$ and of  $X^{(s)}_{\cdot}(y)$ when $|x-y|\ge e^{-s}+2\gep$.

  \medskip

\noindent We let $K_{t,\gep}$ and $\bar K_{t,\gep,0}$ denote the covariance of $X_{t,\gep}$  and the cross-covariance with $\bar X_t$, 
\begin{equation}\label{crossover}\begin{split}
 K_{t,\gep}(x,y)&:=\bbE[X_{t,\gep}(x)X_{t,\gep}(y)]= \int_{\bbR^d} K_t(z_1,z_2)\theta_{\gep}(x-z_1 )\theta_{\gep}(x-z_1 ) \dd z_1 \dd z_2,\\ 
 \bar K_{t,\gep,0}(x,y)&:=\bbE[X_{t,\gep}(x)\bar X_{t}(y)]=\bbE[X_{\gep}(x)\bar X_{t}(y)]
 = \int_{\bbR^d} K_t(z,y)\theta_{\gep}(x-z) \dd z.
 \end{split}
\end{equation}
Setting $\log_+ u= \max(\log u, 0)$, there exists a constant $C_R$ which is such that 
  \begin{equation}\label{thelogbound}
  \left| K_{t,\gep}(x,y) - t\wedge \log_+ \frac{1}{|x-y|\vee \gep} \right|\le C_R
  \end{equation}
The same bound are valid for $K_t$, $\bar K_t$ and $\bar K_{t,\gep,0}$, (with $\gep=0$ in the two first cases). For the upper bound on $\bar K_t$ no constant is needed and we have 
\begin{equation}\label{steak}
\bar K_t(x,y)\le t\wedge \log_+ \frac{1}{|x-y|}.
  \end{equation}
  The two bounds \eqref{thelogbound}-\eqref{steak} are easily obtained from the definition \eqref{crossover} and \eqref{defqt}. We include a proof in Appendix \ref{prosteak} for completeness.

\subsection{Truncation of $M_t$, $M_\gep$  and $D_t$}\label{trunkz}
We introduce here  truncated versions of our processes that are the starting point of all our proofs. 
Given $q>0$, and $R>0$, we set 
\begin{equation}\begin{split}\label{aqx}
 A^{(q)}_t(x)&:= \{ \forall s\in [0,t],\quad \bar X_s(x)< \sqrt{2d}s+q \},\\
 \cA^{(q)}_R&:= \Big\{  \sup_{t>0}\sup_{|x|\le R }\left( \bar X_t(x)- \sqrt{2d} t \right)< q\Big\}=\bigcap_{|x|\le R} \bigcap_{t>0} A^{(q)}_t(x).
\end{split}\end{equation}
Setting $t_{\gep}:=(\log 1/\gep)$,  we define  $M^{\alpha,(q)}_t$, $M^{(q)}_\gep$ and $D^{(q)}_t$ as follows 
\begin{equation}\begin{split}\label{trunkeq}
 M^{\alpha,(q)}_t(E)&:= \int_E e^{\alpha X_t(x)- \frac{\alpha^2}{2} K_t(x)}\ind_{A^{(q)}_t(x)} \dd x=: \int_E W^{\alpha,(q)}_t(x) \dd x,\\
 M^{(q)}_t(E)&:= \int_E e^{\sqrt{2d} X_t(x)- d K_t(x)}\ind_{A^{(q)}_t(x)} \dd x=: \int_E W^{(q)}_t(x) \dd x,\\
 M^{(q)}_{\gep}&:=   \int_{E}e^{\sqrt{2d}X_\gep(x)- d K_{\gep}(x)}\ind_{A^{(q)}_{t_{\gep}}(x)}\dd x =:  \int_{E}W^{(q)}_\gep(x) \dd x,\\
  D^{(q)}_t(E)&:= \int_E (\sqrt{2d} t+q-\bar X_t(x)) W^{(q)}_t(x)\dd x=: \int_E Z^{(q)}_t(x) \dd x.
\end{split}\end{equation}
We further set
 \begin{equation}\label{leslimsup}
 \bar D^{(q)}_\infty(E):=\limsup_{t\to \infty}  D^{(q)}_t(E) \quad \text{ and } \quad  \bar M^{\alpha,(q)}_\infty(E):=\limsup_{t\to \infty} \bar M^{\alpha,(q)}_t(E).
 \end{equation}
It is not difficult to check that   $D^{(q)}_t(E)$  is a  nonnegative martingale and $M^{\alpha,(q)}$ a nonnegative supermartingale. Hence almost surely the $\limsup$s in \eqref{leslimsup} are limits. 
Let us register these observations as a lemma (proved at the end of this section for completeness).
\begin{lemma}\label{martinmister}
For any fixed $E\in \cB_b$,  $D^{(q)}_t(E)$ is a nonnegative martingale for the filtration $(\cF_t)$. In particular we have almost-surely
$$  \bar D^{(q)}_\infty(E)=\lim_{t\to \infty}  D^{(q)}_t(E)<\infty.$$
For any fixed $E\in \cB_b$, $\alpha\ge 0$, $M^{\alpha,(q)}_t(E)$ is a nonnegative supermartingale for the filtration $(\cF_t)$. When $\alpha<\sqrt{2d}$, we have $\lim_{t\to \infty}M^{\alpha,(q)}_t(E)=\bar M^{\alpha,(q)}_\infty(E)$ in $L^1$.
\end{lemma}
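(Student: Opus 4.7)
The plan is to reduce every (super)martingale claim to a pointwise statement at fixed $x\in\bbR^d$ via Fubini, and then argue convergence at the end. Positivity is immediate: the integrand of $M^{\alpha,(q)}_t(E)$ is nonnegative, while on $A^{(q)}_t(x)$ the factor $\sqrt{2d}t+q-\bar X_t(x)$ is strictly positive by the very definition \eqref{aqx}. To justify Fubini I would use the fact that for each $x$, $e^{\sqrt{2d}X_t(x)-dK_t(x)}$ is a mean-one exponential martingale (since $\bar X_\cdot(x)$ is a standard BM and $\bar K_t(x)=t$); applying Cameron-Martin and optional stopping one even obtains $\bbE[Z^{(q)}_t(x)]=q$ for all $t,x$, and trivially $\bbE[W^{(q)}_t(x)]\le 1$, which gives uniform bounds on compact sets.

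The supermartingale property of $M^{\alpha,(q)}_t(x)$ then follows at once from the fact that $A^{(q)}_t(x)$ is non-increasing in $t$: for $s\le t$,
\begin{equation*}
\bbE[W^{\alpha,(q)}_t(x)\mid\cF_s]\le \ind_{A^{(q)}_s(x)}\,\bbE\bigl[e^{\alpha X_t(x)-\tfrac{\alpha^2}{2}K_t(x)}\bigm|\cF_s\bigr]=W^{\alpha,(q)}_s(x).
\end{equation*}

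The main step is the martingale property of $D^{(q)}_t(x)$, which I would establish via a Cameron-Martin computation (Proposition \ref{cameronmartinpro}). Fix $s\le t$, let $T=t-s$, and on $A^{(q)}_s(x)$ set $c:=\sqrt{2d}s+q-\bar X_s(x)>0$. Under the $\cF_s$-conditional original law, $W_r:=\bar X_{s+r}(x)-\bar X_s(x)$, $r\in[0,T]$, is a standard BM; tilting by the exponential $e^{\sqrt{2d}W_T-dT}=e^{\sqrt{2d}(X_t(x)-X_s(x))-d(t-s)}$, the process $B_r:=W_r-\sqrt{2d}r$ becomes a standard BM under the tilted measure $\tilde\bbQ$. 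A direct rewriting yields
\begin{equation*}
Z^{(q)}_t(x)=e^{\sqrt{2d}X_s(x)-dK_s(x)}\ind_{A^{(q)}_s(x)}\cdot(c-B_T)\,\ind_{\{\forall r\in[0,T]:\,B_r<c\}}\cdot e^{\sqrt{2d}W_T-dT},
\end{equation*}
so taking the $\cF_s$-conditional expectation and unwinding the tilt reduces the problem to computing $\tilde\bbE[(c-B_T)\ind_{\tau>T}]$, where $\tau:=\inf\{r\ge 0:B_r=c\}$. Since $(c-B_T)\ind_{\tau>T}=c-B_{T\wedge\tau}$ and $(c-B_{r\wedge\tau})_{r\ge 0}$ is a nonnegative true $\tilde\bbQ$-martingale starting at $c$, optional stopping delivers the value $c$, which in turn gives $\bbE[Z^{(q)}_t(x)\mid\cF_s]=Z^{(q)}_s(x)$. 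The main obstacle here is simply arranging the tilt and the boundary-crossing bookkeeping correctly; the rest of the lemma is routine.

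For the convergence part, the nonnegative (super)martingale convergence theorem applied to $D^{(q)}_t(E)$ and $M^{\alpha,(q)}_t(E)$ yields $\bbP$-a.s.\ convergence to integrable limits, which therefore coincide almost surely with $\bar D^{(q)}_\infty(E)$ and $\bar M^{\alpha,(q)}_\infty(E)$. For the $L^1$ claim in the subcritical case $\alpha<\sqrt{2d}$, I would use the domination $0\le M^{\alpha,(q)}_t(E)\le M^{\alpha}_t(E)$ together with the uniform integrability of $M^{\alpha}_t(E)$, which follows from its $L^1$ convergence asserted in Theorem \ref{rappel}; this upgrades the a.s.\ convergence of $M^{\alpha,(q)}_t(E)$ to $L^1$ convergence.
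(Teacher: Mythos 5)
Your proposal is correct, and the overall skeleton matches the paper's: nonnegativity from the definition of $A^{(q)}_t(x)$, reduction via (conditional) Fubini to a pointwise statement at fixed $x$, the one-line domination argument for the supermartingale property of $M^{\alpha,(q)}_t$, and the identical treatment of the $L^1$ claim for $\alpha<\sqrt{2d}$ (domination by $M^{\alpha}_t(E)$ plus uniform integrability from Theorem \ref{rappel}). Where you differ is in the heart of the matter, the martingale property of $Z^{(q)}_t(x)$: the paper factors out $Z^{(q)}_0(x)$, reduces to the one-dimensional process $N^{(q)}_t=(\sqrt{2d}t+q-B_t)e^{\sqrt{2d}B_t-dt}\ind_{\{\forall s\le t,\,B_s\le \sqrt{2d}s+q\}}$, and verifies it is a local martingale by It\^o's formula together with an $L^2$ bound to upgrade it to a true martingale; you instead compute $\bbE[Z^{(q)}_t(x)\mid\cF_s]$ exactly by a Cameron--Martin tilt of the post-$s$ increments (which removes the drift) followed by the observation that $(c-B_{T\wedge\tau})$ is a stopped nonnegative martingale, giving the value $c=\sqrt{2d}s+q-\bar X_s(x)$ back. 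Both are valid; your route avoids the unverified ``reader can check via It\^o'' step and in fact reproduces, one section early, exactly the Girsanov-plus-barrier computation the paper performs in Lemma \ref{lecondit} (equation \eqref{laconvmone}), so it integrates naturally with the rest of the argument, while the paper's It\^o route is quicker to state but leaves the local-martingale verification and the $L^2$ integrability check to the reader. Two minor points to make explicit in a polished write-up: the barrier level $c$ is random but $\cF_s$-measurable, so the conditional computation should be justified by the independence of the post-$s$ increments from $\cF_s$ (freezing $c$), exactly as in the paper's Section \ref{considerations}; and your unconditional identity $\bbE[Z^{(q)}_t(x)]=q$, used for the Fubini step, should be obtained by the same direct computation with $s=0$ (or simply replaced by conditional Tonelli for nonnegative integrands) so as not to appear to presuppose the martingale property being proved.
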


\noindent A crucial result that validates the truncation approach is that for large values $q$ the event $\cA^{(q)}_R$ has large probability. We provide a proof in Appendix \ref{tekos1}.
\begin{proposition}\label{tiptop}
For any $R>0$ we have almost surely 
\begin{equation}
 \sup_{t>0}\sup_{|x|\le R} \left(\bar X_t- \sqrt{2d} t +\frac{\log t}{2\sqrt{2d}}-\frac{4 \log \log t}{\sqrt{2d}}\right) <\infty.
\end{equation}
In particular we have
  $\lim_{q\to \infty}  \bP\left[\cA^{(q)}_R\right]=1.$
 As a consequence given $R$ there exists a random $q_0\in \mathbb N$ such that for every $q\ge q_0$, $\alpha\in(0,\sqrt{2d}]$,  $t\ge 0$, $\gep\in(0,1]$ and $E\subset B(0,R)$
 \begin{equation}\label{coincix}
  M^{\alpha,(q)}_t(E)= M^{\alpha}_t(E) \quad \text{ and } \quad  M^{(q)}_\gep(E)= M_\gep(E).  
  \end{equation}

\end{proposition}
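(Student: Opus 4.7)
Write $h(t):=-\tfrac{\log t}{2\sqrt{2d}}+\tfrac{4\log\log t}{\sqrt{2d}}$ and $\cL_n:=(e^{-n}\Z^d)\cap B(0,R)$, so $|\cL_n|\asymp e^{nd}$. The main task is to show
\[
P_M:=\bbP\!\left[\exists\, n\ge 3,\ x\in\cL_n:\ \bar X_n(x)>\sqrt{2d}\,n+h(n)+M\right]\ \xrightarrow[M\to\infty]{}\ 0,
\]
since the continuous statement reduces to this by a standard Kolmogorov--Chentsov chaining argument (the covariance bounds \eqref{thelogbound}--\eqref{steak} control the modulus of $\bar X_t(x)$ on cells $[n,n+1]\times\{|x-x_0|\le e^{-n}\}$, at the cost of an $O(1)$ random absorption in $M$). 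Once $P_M\to 0$ is established, the first assertion follows by Borel--Cantelli; then $\lim_{q\to\infty}\bbP[\cA^{(q)}_R]=1$ is immediate (the gauge $h(t)\to+\infty$ as $t\to\infty$ forces $\sup_{t,x}(\bar X_t(x)-\sqrt{2d}t)<\infty$ a.s.), and \eqref{coincix} follows because on $\cA^{(q)}_R$ the indicators $\ind_{A^{(q)}_t(x)}$ in \eqref{trunkeq} are identically $1$.

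Introduce the target $T_n(x):=\{\bar X_n(x)>\sqrt{2d}n+h(n)+M\}$ and the barrier $B_n(x):=\{\bar X_s(x)\le\sqrt{2d}s+M\text{ for all }s\in[0,n]\}$, and split
\[
P_M\ \le\ \sum_{n\ge 3}\sum_{x\in\cL_n}\bbP[T_n(x)\cap B_n(x)]\ +\ \bbP\!\left[\exists\,(s,y)\in[0,\infty)\times B(0,R):\ \bar X_s(y)>\sqrt{2d}s+M\right].
\]
For the first sum, apply Cameron--Martin (Proposition \ref{cameronmartinpro}): tilting the law of $(\bar X_s(x))_{s\le n}$ by $e^{\sqrt{2d}\bar X_n(x)-dn}$ turns $W_s:=\bar X_s(x)-\sqrt{2d}s$ into a standard Brownian motion on $[0,n]$, so $T_n(x)\cap B_n(x)$ becomes ``$W$ stays below $M$ on $[0,n]$ and ends above $h(n)+M$''. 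Writing $W_n=h(n)+M+v$ with $v\ge 0$ and applying the bridge ballot estimate \eqref{bridjoux} to the positive bridge $M-W$ (with endpoints $M$ and $M-h(n)-v$) yields a crossing factor $\le \tfrac{2M(M-h(n)-v)}{n}\asymp \tfrac{M\log n}{n}$. Integrating the Gaussian density of $W_n$ against this factor and converting back via the Radon--Nikodym weight, whose leading contribution is $e^{-dn-\sqrt{2d}(h(n)+M)}=e^{-dn}\sqrt{n}(\log n)^{-4}e^{-\sqrt{2d}M}$, produces
\[
\bbP[T_n(x)\cap B_n(x)]\ \le\ \frac{C\,M\,e^{-dn}\,e^{-\sqrt{2d}M}}{n\,(\log n)^{3}}.
\]
The union over $\cL_n$ (factor $e^{nd}$) and summation over $n\ge 3$ produce a contribution $O(Me^{-\sqrt{2d}M})$, which vanishes as $M\to\infty$.

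The key obstacle is the second probability: the naive union bound $|\cL_n|\cdot e^{-2\sqrt{2d}M}$ from Lemma \ref{stupid}(3) diverges in $n$, so ``barrier fails'' cannot be handled point-by-point. The fix is to discretize \emph{at the first-passage scale}. If some $(s,y)$ has $\bar X_s(y)>\sqrt{2d}s+M$, set $m:=\lceil s\rceil$. The finite-range structure of $\kappa$ (Section \ref{considerations}) gives $\bar K_s(x,y)=s+O(1)$ for $|x-y|\le e^{-m}$ and $s\le m$, hence $|\bar X_s(x)-\bar X_s(y)|$ has a uniform-in-$m$ Gaussian modulus; replacing $y$ by a nearest lattice point in $\cL_m$ (possibly refined so that the loss is $o(1)$) costs only a harmless reduction $M\to M-O(1)$. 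For a single point, the classical first-passage density of the standard Brownian motion $(\bar X_s(y))_{s\ge 0}$ above the line $\sqrt{2d}s+M$, restricted to $s\in[m-1,m]$, obeys
\[
\bbP[\tau_y\in[m-1,m]]\ \le\ \frac{C\,M}{m^{3/2}}\,e^{-dm-\sqrt{2d}M}.
\]
The union over $y\in\cL_m$ therefore contributes $\tfrac{C\,M\,e^{-\sqrt{2d}M}}{m^{3/2}}$, which is \emph{summable} in $m$ thanks to the $m^{-3/2}$ decay. Combining, $P_M=O(Me^{-\sqrt{2d}M})\to 0$, which concludes the proof.
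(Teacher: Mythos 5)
Your discrete estimates are essentially sound and close in spirit to the paper's proof: the Cameron--Martin tilt combined with the ballot bound \eqref{bridjoux} gives exactly the $\frac{M\log n}{n}$ entropic-repulsion factor that makes the refined threshold summable, and your idea of controlling the barrier-failure probability by a union bound at the first-passage scale (gaining the factor $m^{-3/2}$) is a legitimate alternative to the paper's preliminary rough Borel--Cantelli step \eqref{firstbc}. The genuine gap is the reduction from the continuum supremum to the skeleton, which you invoke twice: in the Plan (``standard Kolmogorov--Chentsov chaining \dots at the cost of an $O(1)$ random absorption in $M$'') and again in the treatment of the second probability (``costs only a harmless reduction $M\to M-O(1)$''). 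This reduction is false as stated. The within-cell oscillations $Z_{n,\mathbf{i}}:=\max_{(t,x)\in[n,n+1]\times(\mathbf{i}e^{-n}+[0,e^{-n})^d)}\bigl(\bar X_t(x)-\bar X_n(\mathbf{i}e^{-n})\bigr)$ do have sub-Gaussian tails with constants uniform in $(n,\mathbf{i})$ --- that is the content of the paper's Lemma \ref{ggttail}, which itself requires Borell--TIS plus a Fernique/Garsia-type bound on the expected maximum, an ingredient your sketch does not supply --- but there are $\asymp e^{dn}$ essentially independent cells at scale $n$ (already the increments $\bar X_{n+1}(\mathbf{i}e^{-n})-\bar X_n(\mathbf{i}e^{-n})$ are i.i.d.\ standard Gaussians for well-separated $\mathbf{i}$), so $\max_{\mathbf{i}}Z_{n,\mathbf{i}}$ grows like $\sqrt{n}$ and $\sup_{n,\mathbf{i}}Z_{n,\mathbf{i}}=\infty$ almost surely. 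Consequently no a.s.\ finite random constant can absorb the passage from $\sup_{t,x}$ to the lattice maximum, and $P_M\to 0$ does not by itself control the continuum supremum; the same objection applies to replacing $y$ by the nearest point of $\cL_m$ in your first-passage argument.

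The gap is quantitative, not merely formal: the naive repair of lowering the barrier by the typical maximal oscillation $c\sqrt{m}$ at scale $m$ would replace $e^{-\sqrt{2d}M}$ by $e^{-\sqrt{2d}(M-c\sqrt m)}$ in your first-passage bound, and $e^{c\sqrt{2d}\sqrt m}m^{-3/2}$ is no longer summable. The correct repair is to keep the oscillation inside the probability estimate cell by cell and convolve its sub-Gaussian tail with the Gaussian density of the skeleton value (and with the ballot factor in your main sum), i.e.\ to work throughout with $Y_{n,\mathbf{i}}+Z_{n,\mathbf{i}}$ rather than with $\bar X_n(x)$ alone; this is precisely what the paper does in \eqref{convovo} and in the refined estimate on the event $\cB_q$, and with that modification both of your computations do go through. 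Two further small slips: the bridge endpoints in your ballot step should be $M$ and $-h(n)-v$ (immaterial for the asymptotics), and $h(t)\to-\infty$, not $+\infty$; the deduction of $\lim_{q\to\infty}\bbP[\cA^{(q)}_R]=1$ uses only that $h$ is bounded above.
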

\noindent Let us present a couple of easy consequences of Proposition \ref{tiptop}.
\begin{cor}\label{cortiptop}
We for any $E\in \cB_b$  we have almost surely
\begin{equation}\label{mtoo}
\lim_{t\to \infty} M_t(E)=0.
\end{equation}
Given $R>0$, there exists a random $q_0\in \mathbb N$ such that a.s.\ for every $q\ge q_0$,
\begin{equation}\label{dqdt}
\lim_{t\to \infty}\sup_{E\subset R} |D_{t}(E)- D^{(q)}_{t}(E)|=0.
\end{equation}

\end{cor}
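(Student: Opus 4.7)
The plan is to exploit the refined envelope in Proposition \ref{tiptop}: the logarithmic gap of order $\tfrac{\log t}{2\sqrt{2d}}$ between the critical drift $\sqrt{2d}\,t$ and the running maximum of $\bar X_t$ on $B(0,R)$ converts the martingale control on $D^{(q)}_t$ into quantitative decay of $M_t$. The second assertion will then follow from an algebraic identity combined with the decay established in the first part.

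For \eqref{mtoo}, fix $R$ so that $E\subset B(0,R)$ and let $q_0=q_0(\omega)$ be the random integer provided by Proposition \ref{tiptop}, so $\cA^{(q)}_R$ holds for all $q\ge q_0$. On this event, \eqref{coincix} gives $M_t(E)=M^{(q)}_t(E)$, and the refined bound of Proposition \ref{tiptop} furnishes an a.s.\ finite $C_R$ such that
\begin{equation*}
\sqrt{2d}\,t+q-\bar X_t(x)\ \ge\ q+\tfrac{\log t}{2\sqrt{2d}}-\tfrac{4\log\log t}{\sqrt{2d}}-C_R\ \ge\ c(\omega)\log t
\end{equation*}
for every $x\in B(0,R)$ and every $t$ large enough. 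Integrating this pointwise lower bound against $W^{(q)}_t$ yields $D^{(q)}_t(E)\ge c(\omega)(\log t)\,M_t(E)$. Since $D^{(q)}_t(E)$ is a nonnegative martingale (Lemma \ref{martinmister}), it converges almost surely to a finite limit; hence $M_t(E)=O(1/\log t)\to 0$ almost surely on the full-probability event $\cA^{(q_0)}_R$, which is exactly \eqref{mtoo}.

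For \eqref{dqdt}, the same $q_0$ works. On $\cA^{(q)}_R$ with $q\ge q_0$ the indicator $\ind_{A^{(q)}_t(x)}$ equals $1$ for every $x\in B(0,R)$ and $t\ge 0$, so $W^{(q)}_t(x)=e^{\sqrt{2d}X_t(x)-dK_t(x)}$ there. Using the decomposition $X_t=X_0+\bar X_t$ together with $K_t(x)=K_0(x)+t$, the coefficients of $D_t$ and $D^{(q)}_t$ differ by the $t$-independent quantity
\begin{equation*}
(\sqrt{2d}\,K_t(x)-X_t(x))-(\sqrt{2d}\,t+q-\bar X_t(x))=\sqrt{2d}\,K_0(x)-X_0(x)-q.
\end{equation*}
Since $K_0$ is continuous and $X_0$ admits a continuous modification (Kolmogorov applied to the Hölder-continuous covariance $K_0$), both are almost surely bounded on $B(0,R)$, and this coefficient is uniformly bounded by some a.s.\ finite $C(\omega,q)$. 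Consequently $\sup_{E\subset B(0,R)}|D_t(E)-D^{(q)}_t(E)|\le C(\omega,q)\,M_t(B(0,R))$, and applying \eqref{mtoo} to the set $B(0,R)$ yields \eqref{dqdt}. No serious obstacle is foreseen: the essential input is the sharp logarithmic envelope of Proposition \ref{tiptop}, and everything else reduces to unpacking definitions and invoking standard martingale convergence.
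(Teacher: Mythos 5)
Your proof is correct, but the first half takes a genuinely different route from the paper. For \eqref{mtoo} the paper argues: $M_t(E)$ is a nonnegative martingale, hence converges a.s., and the limit is identified as zero via a first-moment computation, $\bbE[M_t(E)\ind_{\cA^{(q)}_R}]\le \bbE[M^{(q)}_t(E)]\le C q/\sqrt{t}$, obtained from the Cameron--Martin formula and the reflection principle (Lemma \ref{stupid}), together with $\bbP[\cA^{(q)}_R]\to 1$. You instead use the \emph{refined} envelope in Proposition \ref{tiptop} (the $\frac{\log t}{2\sqrt{2d}}$ correction) to get the pathwise inequality $D^{(q)}_t(E)\ge c(\omega)(\log t)\,M^{(q)}_t(E)$ for large $t$, and then conclude from a.s.\ convergence of the nonnegative martingale $D^{(q)}_t(E)$ (Lemma \ref{martinmister}) plus the coincidence \eqref{coincix}. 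Both are valid and non-circular; yours avoids the Cameron--Martin/first-moment step and yields an a.s.\ rate $O(1/\log t)$, but it leans on the sharper (and harder to prove) part of Proposition \ref{tiptop}, whereas the paper only needs $\bbP[\cA^{(q)}_R]\to1$ and an elementary expectation bound, and gets the stronger $L^1$-on-the-good-event rate $q/\sqrt t$, which is in the spirit of how $\sqrt{t}\,M_t$ is later renormalized. One small point to make explicit: since $q_0$ is random, you should invoke the a.s.\ convergence of $D^{(q)}_t(E)$ simultaneously for all integers $q$ (a countable intersection) before specializing to $q=q_0(\omega)$. Your treatment of \eqref{dqdt} is essentially the paper's argument; your computation of the coefficient difference $\sqrt{2d}K_0(x)-X_0(x)-q$ is in fact slightly more careful than the paper's displayed $(q-X_0(x))$, and the conclusion via continuity of $K_0$, $X_0$ and \eqref{mtoo} applied to $B(0,R)$ is identical.
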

\begin{proof}[Proof of Corollary \ref{cortiptop}]
 $M_t(E)$ is a nonnegative martingale and hence it converges almost-surely. It suffices thus to show that \eqref{mtoo} holds in probability. 
 We prove that
\begin{equation}\label{l11q}
\lim_{t\to \infty} \bbE\left[ M_t(E)\ind_{\cA^{(q)}_R}\right]=0.
\end{equation}
Using  Proposition \ref{tiptop}, we see that \eqref{l11q} implies convergence in probability.
 We have 
 \begin{equation}
  \bbE\left[ M_t(E) \ind_{\cA^{(q)}_R} \right] \le  \bbE\left[ M^{(q)}_t(E)\right]= \int_E \bbE\left[  e^{\sqrt{2d}X_t(x)- dK_t(x)}\ind_{A^{(q)}_t(x)}\right] \dd x.
 \end{equation}
Using Cameron-Martin formula (Proposition \ref{cameronmartinpro}), we see that the  exponential tilt shifts $\bar X_s(x)$ by an amount $\sqrt{2d}s$
and thus using Lemma \ref{stupid} we have
\begin{equation}
 \bbE\left[  e^{\sqrt{2d}X_t(x)- dK_t(x)}\ind_{A^{(q)}_t(x)}\right]=\bbP\left[ \sup_{s\in[0,t]} \bar X_s(x) \le q\right]\le 
  \sqrt{\frac{2}{\pi t}} q.
\end{equation}
Since $E$ has finite Lebesgue measure, this allows to conclude the proof of \eqref{mtoo}. Next, using Proposition \ref{tiptop}, there exists $q_0\in \bbN$ such that $\mathcal A^{q}_R$ holds for $q\ge q_0$, and hence 
\begin{multline}
  |\bar D^{(q)}_t(E)-\bar  D_t(E)|
  =\left|\int_{E} \left(q-X_0(x)\right) e^{\sqrt{2d}X_t(x)- dK_t(x)} \dd x \right|\\ \le \left(q+ \sup_{|x|\le R} |X_0(x)|\right) M_t(B(0,R)).
\end{multline}
Since $X_0$ is continuous the prefactor is finite and hence \eqref{dqdt} follows from \eqref{mtoo}
\end{proof}

\begin{proof}[Proof of Lemma \ref{martinmister}]
 It is sufficient to prove that $Z^{(q)}_t(x)$ is a martingale for any $x$. Indeed since $\bbE[Z^{(q)}_t(x)]=q$ and $E$ is bounded, integrability conditions allow for the  exchange of 
 integral and conditional expectation. We obtain that  for $s<t$
 \begin{equation*}
  \bbE\left[  \int_E Z^{(q)}_t(x) \dd x  \ | \ \cF_s \right]=  \int_E   \bbE[    Z^{(q)}_t(x)   \ | \ \cF_s]\dd x =\int_E   Z^{(q)}_s(x)  \dd x.
  \end{equation*}
  Since $Z^{(q)}_0(x)=e^{\sqrt{2d}X_0(x)-dK_0(x)}$ is integrable and independent of $\bar Z^{(q)}_t(x):=Z^{(q)}_t(x)/  Z^{(q)}_0(x)$, it is sufficient to show that  
 $\bar Z^{(q)}_t(x)$ is a martingale.
 Since $(\bar X_t(x))_{t\ge 0}$ is an $\mathcal F_t$-Brownian motion, 
 we only have to prove that if $(B_t)_{t\ge 0}$ is a Brownian motion then 
 $$N^{(q)}_t:=(\sqrt{2d} t+q- B_t) e^{\sqrt{2d}B_t- dt}\ind_{\{  \forall s\in [0,t] ,\ B_s\le \sqrt{2d}s+q \}} $$  
 is a martingale for its natural filtration. The reader can check, using Itô formula, that $N^{(q)}_t$ is a local martingale and using a direct computation that $N^{(q)}_t\in L^2$  for every $t>0$.
To show that $M^{\alpha,(q)}_t$ is a supermartingale, we proceed similarly  and observe that 
$$ Q^{\alpha,(q)}_t:= e^{\sqrt{2d}B_t- dt}\ind_{\{  \forall s\in [0,t] ,\ B_s\le \sqrt{2d}s+q \}} $$ 
is a supermartingale. When $\alpha\in [0,\sqrt{2d})$, since $M^{\alpha,(q)}_t(E)\le M^{\alpha}_t(E)$ and  $M^{\alpha}_t(E)$ is uniformly integrable (by Theorem \ref{rappel}), so is $M^{\alpha,(q)}_t$. Hence it converges in $L_1$.
\end{proof}

\subsection{Moment estimates }
 Lemma \ref{martinmister} ensures  the convergence of $D^{(q)}_t(E)$, but  we still need to prove uniform integrability to show that the limit is nontrivial.
Our strategy is to compute the second moments. This is the purpose of the following technical estimates.
 Their proof is not conceptually difficult, but requires a couple of lengthy computation. We  postpone it  to Appendix \ref{tekos1} for this reason.
 
\begin{proposition}\label{lateknik}
Given $x,y \in \bbR^d$, $t\ge 0$ and $\gep\in (0,1)$, we set 
$$ w(x,y):=\log \frac{1}{|x-y|\wedge 1}, \quad  u(x,y,t):= w     \wedge  t, \quad    v(\gep,x,y):= w\wedge \log( 1/\gep),$$
and $t_{\gep}= \log (1/\gep)$ 
Given $q$ and $R$ positive there exists a constant $C_{q,R}$ such that the following holds for every $x,y\in B(0,R)$, $t\ge 0$, $\gep\in(0,1)$ and $\alpha\in (0,\sqrt{2d})$,

\begin{align} \label{tek1}
   \bbE\left[ W^{(q)}_t(x) W^{(q)}_t(y)\right]  &\le C e^{d u} (u+1)^{-3/2}  (t-u+1)^{-1},\\
   \label{tek2}
  \  \bbE\left[ W^{(q)}_\gep(x) W^{(q)}_\gep(y)\right] & \le C e^{d v} (v+1)^{-3/2}  ( t_{\gep}-v+1)^{-1},\\
  \label{tek3}
   \bbE\left[ W^{\alpha,(q)}_t(x) W^{\alpha,(q)}_t(y)\right] & \le C e^{d u} (u+1)^{-3/2},\\
   \label{tek4}
 \lim_{s\to \infty} \bbE\left[ W^{\alpha,(q)}_s(x) W^{\alpha,(q)}_s(y)\right] & \le C (\sqrt{2d}-\alpha)^2 e^{dw} \left( w+1\right)^{-3/2}.
\end{align}
As  consequences we have (with a possibly different constant $C_{q,R}$)
 \begin{itemize}
  \item[(i)] Setting  $\varphi(u)=u^d [1\vee (\log (1/u)]^{1/2}$,   for all $E\subset B(0,R)$ 
  \begin{equation}\label{hiphip}
  \limsup_{t\to \infty} \bbE\left[ t(M^{(q)}_t(E))^2 \right]\le C  \varphi(|E|).
  \end{equation}
  \item[(ii)] For any $E\in \mathcal B_b$, 
  \begin{equation}\label{whendelta}
  \begin{split}
  \lim_{\delta\to 0}   \sup_{t\ge 0} \int_{E^2} \bbE\left[ t W^{(q)}_t(x)W^{(q)}_t(y)\right]\ind_{\{|x-y|\le \delta\}} \dd x \dd y&=0,\\
\lim_{\delta\to 0}   \sup_{\gep\in (0,1]} \int_{E^2} \bbE\left[ \log (1/\gep) W^{(q)}_\gep(x)W^{(q)}_\gep´(y)\right]\ind_{\{|x-y|\le \delta\}} &\dd x \dd y=0,\\  \lim_{\delta\to 0} \sup_{\alpha\in(0,\sqrt{2d})}  \limsup_{t\to \infty} \int_{E^2} \bbE\left[ (\sqrt{2d}-\alpha)^{-2} W^{\alpha,(q)}_t(x)W^{\alpha,(q)}_t(y)\right]&\ind_{\{|x-y|\le \delta\}} \dd x \dd y=0.
\end{split}
  \end{equation}
  \end{itemize}
\end{proposition}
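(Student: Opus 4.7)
The plan is to prove \eqref{tek1}--\eqref{tek4} by a common two-step strategy. First, I would use the Cameron-Martin formula (Proposition \ref{cameronmartinpro}) to rewrite each second moment as an exponential prefactor times a probability of a constrained event for Gaussian processes. Second, I would estimate this probability via the ballot-type Brownian motion bounds of Lemma \ref{stupid}, exploiting the decorrelation structure from Section \ref{considerations} that makes the $x$ and $y$ processes effectively independent once $s$ exceeds $u=\log(1/|x-y|)\wedge t$.

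For \eqref{tek1}, tilting by $\sqrt{2d}(X_t(x)+X_t(y))$ yields $\bbE[W^{(q)}_t(x)W^{(q)}_t(y)]=e^{2dK_t(x,y)}\,\tilde{\bbP}[A^{(q)}_t(x)\cap A^{(q)}_t(y)]$, and \eqref{thelogbound} bounds $e^{2dK_t(x,y)}\le Ce^{2du}$. Under $\tilde{\bbP}$, each of $\bar X_s(x)$ and $\bar X_s(y)$ picks up a drift $\sqrt{2d}(\bar K_s(x,x)+\bar K_s(x,y))$ which, up to a bounded error, equals $2\sqrt{2d}s$ on $[0,u]$ and $\sqrt{2d}(s+u)$ on $[u,t]$. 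After subtracting the drift, the event $A^{(q)}_t(x)$ translates into the requirement that a standard BM $\tilde B$ starting at $q$ remain above the broken line $s\mapsto\sqrt{2d}(s\wedge u)$ for all $s\le t$. The finite range of $Q_s$ ensures that, conditional on their values at time $u$, the processes for $x$ and $y$ become independent on $[u,t]$.

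Conditioning on $\tilde B(u)=\sqrt{2d}u+a$ with $a>0$, the Gaussian density at this value is $(2\pi u)^{-1/2}\exp(-du-\sqrt{2d}(a-q)-(a-q)^2/(2u))$, and the bridge estimate \eqref{bridjoux} bounds the conditional probability of staying above the line on $[0,u]$ by $2qa/u$. On $[u,t]$, by independence, each BM must stay nonnegative starting from $a$, which by reflection has probability at most $Ca/\sqrt{t-u+1}$; squaring gives $Ca^2/(t-u+1)$. Integrating the product against the endpoint density produces the factor $e^{-du}(u+1)^{-3/2}(t-u+1)^{-1}$, which combined with the $e^{2du}$ prefactor yields \eqref{tek1}. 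Estimate \eqref{tek2} follows the same path with $t_\gep=\log(1/\gep)$ in place of $t$, the mollification producing only bounded modifications through \eqref{thelogbound}. For \eqref{tek3}--\eqref{tek4} one tilts instead by $\alpha$: the barrier for $\tilde B$ on $[u,t]$ then has positive slope $\sqrt{2d}-\alpha$, and the third inequality of Lemma \ref{stupid} bounds the probability of each BM staying below it by $C(\sqrt{2d}-\alpha)(a+q)$; squaring over the two independent processes gives the $(\sqrt{2d}-\alpha)^2$ prefactor appearing in the $t\to\infty$ limit.

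The consequences (i)--(ii) follow by direct integration. Writing $r=|x-y|$ and $L=\log(1/|E|)$, one has $|\{(x,y)\in E^2:|x-y|\le r\}|\le C|E|^d r^d$, so the change of variable $v=\log(1/r)$ cancels the factor $e^{du}\asymp r^{-d}$ and, splitting the range $r\in(0,e^{-t})$ from $r\in(e^{-t},|E|)$, gives $t\int_{E^2}\bbE[W^{(q)}_t(x)W^{(q)}_t(y)]\,dx\,dy\le C|E|^d[t(t+1)^{-3/2}+t\int_L^t(v+1)^{-3/2}(t-v+1)^{-1}dv]$. Splitting the last integral at $v=t/2$ bounds it by $C(L^{-1/2}/t+t^{-3/2}\log t)$, which combined with the prefactor leaves $C|E|^d(1+L^{-1/2})\le C\varphi(|E|)$, proving (i). The same computation with the constraint $|x-y|\le\delta$ replaces $L$ by $\log(1/\delta)$ and hence vanishes as $\delta\to 0$, giving the first line of (ii); the other two lines follow identically from \eqref{tek2} and \eqref{tek3}--\eqref{tek4}. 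The main difficulty is not conceptual but the careful bookkeeping of the tilted drifts, the Gaussian endpoint integration, and the combination of the bridge and reflection estimates.
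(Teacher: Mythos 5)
Your overall route is the same as the paper's (Cameron--Martin tilt, reduce to barrier-crossing probabilities, ballot/bridge estimates from Lemma \ref{stupid}, decorrelation after time $u$, then integrate the two-point bound over $E^2$), and the drift bookkeeping, the endpoint Gaussian integration producing $e^{-du}(u+1)^{-3/2}(t-u+1)^{-1}$, and the integration giving (i)--(ii) are all essentially right. But there is a genuine gap at the central step of \eqref{tek1} (and it recurs in \eqref{tek2} and \eqref{tek4}): after time $u$ you condition only on the height $a$ of the $x$-process above the barrier and assert that ``each BM must stay nonnegative starting from $a$,'' then square to get $Ca^2/(t-u+1)$ and integrate over the single variable $a$. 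In general $\bar X_u(y)\ne \bar X_u(x)$: the two processes sit at \emph{different} heights above the barrier at time $u$, and these heights are correlated Gaussians (their difference has bounded variance, but it is not zero). So the conditional probability of the $[u,t]$ event is a product $\bigl(1\wedge Ca/\sqrt{t-u}\bigr)\bigl(1\wedge Cb/\sqrt{t-u}\bigr)$ with $(a,b)$ jointly distributed, and your one-variable endpoint integration does not cover it as written; the same issue affects which of the two processes your single bridge factor $2qa/u$ on $[0,u]$ refers to.

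The paper closes exactly this point with a small symmetrization device: by exchangeability of $x$ and $y$ one may insert the event $\{\bar X_u(x)\le \bar X_u(y)\}$ at the cost of a factor $2$, drop the $[0,u]$ barrier constraint for the $y$-process entirely, and then observe that on this event the $[u,t]$ constraint for $y$ \emph{implies} the event $F_3$ defined with the threshold $q'-\sqrt{2d}u-\bar X_u(x)$ (the $x$-value, not the $y$-value). Conditionally on $\cF_u$ the events $F_2$ and $F_3$ are independent and both thresholds involve the same variable $z=(q'-\sqrt{2d}u-\bar X_u(x))_+$, so one really does get $\bigl(1\wedge\frac{2q'z}{u}\bigr)\bigl(1\wedge\frac{Cz^2}{t-u}\bigr)$ and a one-dimensional integral in $z$, yielding \eqref{joke}. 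Your argument could alternatively be patched by integrating over $b-a$, which has uniformly bounded variance by \eqref{thelogbound}, but some such argument must be supplied. Two smaller omissions in the $\alpha<\sqrt{2d}$ case: for \eqref{tek3} one needs the case distinction (the paper assumes $2\alpha-\sqrt{2d}>0$, using the trivial bound otherwise), and the identity $\alpha^2-\tfrac{(2\alpha-\sqrt{2d})^2}{2}=d-(\sqrt{2d}-\alpha)^2\le d$ is what converts the prefactor $e^{\alpha^2 u}$ into $e^{du}$; also the third line of \eqref{whendelta} needs a domination/Fatou step (using \eqref{tek3} as the dominating bound) to pass the $\limsup_t$ inside the integral, which ``follow identically'' glosses over.
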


\section{The convergence of $D_t$}\label{dddt}
\noindent We  first  prove a quantitative uniform integrability statements for $D^{(q)}_t(E)$.
Using the observations of the previous section, we  deduce our first main result from it. 
\begin{proposition}\label{convdq}
For any fixed $E\in \mathcal B_b$, and $q\ge 0$, the martingale $D^{(q)}_t(E)$ is bounded in $L^2$. 
More precisely,  there exists $C_{R,q}$ such that  for any $E\subset B(0,R)$  and $t\ge 0$,
\begin{equation}\label{upup}
 \bbE\left[ D^{(q)}_t(E)^2\right] \le C_{R,q}  \varphi(|E|).
\end{equation}
\end{proposition}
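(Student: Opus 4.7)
The plan is to transfer the second moment estimate for $M^{(q)}_T$ provided by Proposition \ref{lateknik}(i) over to $D^{(q)}_t$ via a conditional expectation identity. The bridge is the following pointwise formula, which I would establish first: for every $T > t$ and every $x$,
\begin{equation}\label{plan:condid}
\bbE\bigl[W^{(q)}_T(x) \,\big|\, \mathcal{F}_t\bigr] \;=\; \sqrt{\frac{2\pi}{T-t}}\, \mathfrak{g}_{T-t}\bigl(\sqrt{2d}\,t+q-\bar X_t(x)\bigr)\, W^{(q)}_t(x).
\end{equation}
To prove \eqref{plan:condid}, I would write $W^{(q)}_T(x)=W^{(q)}_t(x)\,\tilde W(x)$ where $\tilde W(x)$ depends only on the post-$t$ Brownian increment $B_u:=\bar X_{t+u}(x)-\bar X_t(x)$, which is independent of $\mathcal{F}_t$. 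Using $X_T(x)-X_t(x)=B_{T-t}$ and $K_T(x)-K_t(x)=T-t$, Cameron--Martin (Proposition \ref{cameronmartinpro}) applied to the tilt $e^{\sqrt{2d}\,B_{T-t}-d(T-t)}$ turns $B'_u:=B_u-\sqrt{2d}\,u$ into a standard Brownian motion, and the truncation event $\{B_u<\sqrt{2d}\,u+a,\,\forall u\le T-t\}$, with $a:=\sqrt{2d}\,t+q-\bar X_t(x)\ge 0$ on $A^{(q)}_t(x)$, becomes $\{B'_u<a,\,\forall u\le T-t\}$. Lemma \ref{stupid} then yields \eqref{plan:condid}; on $A^{(q)}_t(x)^c$ both sides vanish.

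Since $\mathfrak{g}_s(a)=\int_0^a e^{-z^2/(2s)}\dd z$ is nondecreasing in $s$ with $\mathfrak{g}_s(a)\nearrow a$ as $s\to\infty$, the quantity $\sqrt{(T-t)/(2\pi)}\,\bbE[W^{(q)}_T(x)\mid\mathcal{F}_t]=\mathfrak{g}_{T-t}(a)\,W^{(q)}_t(x)$ increases monotonically (in $T$) to $a\,W^{(q)}_t(x)=Z^{(q)}_t(x)$ for every $x$. Integrating over $x\in E$ and applying Fubini plus monotone convergence yields
\begin{equation}\label{plan:montoD}
\sqrt{\tfrac{T-t}{2\pi}}\; \bbE\bigl[M^{(q)}_T(E) \,\big|\, \mathcal{F}_t\bigr] \;\nearrow\; D^{(q)}_t(E) \qquad \bbP\text{-a.s.\ as } T \to \infty.
\end{equation}

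With \eqref{plan:montoD} in hand, the $L^2$ bound is immediate. Squaring (still a monotone increasing sequence of nonnegative quantities), a second application of monotone convergence, then conditional Jensen, and finally Proposition \ref{lateknik}(i) give
\begin{equation*}
\bbE\bigl[D^{(q)}_t(E)^2\bigr] = \lim_{T\to\infty}\tfrac{T-t}{2\pi}\,\bbE\!\left[\bbE[M^{(q)}_T(E)\mid\mathcal{F}_t]^2\right] \le \liminf_{T\to\infty}\tfrac{T-t}{2\pi}\,\bbE\bigl[M^{(q)}_T(E)^2\bigr] \le C_{R,q}\,\varphi(|E|).
\end{equation*}
The only substantive step is the derivation of \eqref{plan:condid}: identifying the right exponential tilt and recognizing that the truncation event translates exactly into a BM staying below the $\mathcal{F}_t$-measurable level $a$. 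Everything else is one Jensen inequality combined with the pre-existing second moment estimate for $M^{(q)}_T$; no direct two-point computation for the derivative martingale is needed.
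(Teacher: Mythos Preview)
Your proof is correct and follows essentially the same route as the paper. The paper packages your identity \eqref{plan:condid} and the monotone limit \eqref{plan:montoD} as a separate lemma (Lemma \ref{lecondit}), proved via the same Cameron--Martin shift and reflection-principle computation, and then concludes exactly as you do: conditional Jensen followed by the second-moment bound \eqref{hiphip} from Proposition \ref{lateknik}(i). Aside from a harmless discrepancy in the normalizing constant (the paper uses $\sqrt{\pi(T-t)/2}$ rather than $\sqrt{(T-t)/(2\pi)}$, stemming from a typo in the statement of Lemma \ref{stupid}), the two arguments are identical.
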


\begin{proof}[Proof of Theorem \ref{derivmartin}]

Combining Lemma \ref{martinmister} and  \eqref{dqdt} in Corollary \ref{cortiptop}, 
there exists a random  $q_0\in \bbN$ such that 
for every $q\ge q_0$ and $E\subset B(0,R)$
\begin{equation}\label{zob}
 \bar D_\infty(E)=\lim_{t\to \infty}  D_{t}(E)=   \bar D^{(q)}_\infty(E).
\end{equation}
Next we prove that $\bar D_{\infty}(E)>0$ a.s.\ if $\gl(E)>0$. We first establish a zero-one law. Let $u>0$ be fixed  and recall the definition \eqref{barxx}.
Setting $s=t-u$, we have 
\begin{multline}\label{decococo}
 D_t(E)=  \int_E \left(\sqrt{2d} s-  X^{(u)}_s(x)\right) e^{\sqrt{2d}X^{(u)}_s(x)-d s} e^{\sqrt{2d}X_u(x)-d u} \dd x \\
 +\int_E  \left(\sqrt{2d} u-  \bar X_u(x)\right)e^{\sqrt{2d}X_t(x)-d t}\dd x. 
\end{multline}
The field $\bar X_u(\cdot)$ is continuous and thus bounded on $E$, recalling \eqref{mtoo} this implies that the second line term in \eqref{decococo}
tends to zero almost-surely. Since $X_u(\cdot)$ is also bounded, the last exponential factor in the first line  of \eqref{decococo} is bounded from above and below,
hence the event
 $\{\bar D_\infty(E)=0\}$  coincides with
  $\left\{ \limsup_{s\to \infty}  \int_E \left(\sqrt{2d }- X^{(u)}_s(x)\right) e^{\sqrt{2d}X^{(u)}_s(x)-d s} \dd x =0 \right\}.$
In particular, recalling Section \ref{considerations}, $\{\bar D_\infty(E)=0\}$ is  independent of  $\mathcal F_u$ for every $u>0$. Since it is measurable w.r.t.\ $\cF_{\infty}$ it is independent of itself and $\bbP[\bar D_\infty(E)=0]\in\{0,1\}$.
To prove that this last probability is equal to zero, since $\bar D^{(q)}_\infty(E)$ is increasing to
$D_{\infty}(E)$ when $q$ increases (cf. \eqref{zob}),
it is sufficient to show that $\bbP[\bar D^{(q)}_\infty(E)>0]>0$ for some value of $q$.
Using Proposition \ref{convdq}, the uniform integrability of the martingale yields
$\bbE[\bar D^{(q)}_\infty(E)]=\bbE[\bar D^{(q)}_0(E)]=q\gl(E)>0.$
\medskip

Finally we prove \eqref{lagaage}. Using \eqref{zob}, it is sufficient to prove the statement with $\bar D_t$ replaced by $\bar D^{(q)}_\infty$. For simplicity we prove it for
subsets of $[0,1]^d$ rather than of $B(0,R)$. For $n\ge 1$, we partition $[0,1]^d$ in $k_n:=2^{d 2^n}$ cube of sidelength $\delta_n=2^{-2n}$, call them $(I^{(n)}_i)^{k_n}_{i=1})$.
Given $E\in \mathcal B_b$, if $n$ is such that   $|E|\in(\delta_{n+1},\delta_n]$ then $E$ 
can be fitted in the union of at most four $I^{(n)}_i$s.
Hence it suffices to show that for all $n$ sufficiently large
\begin{equation}
\max_{i\in \lint 1, k_n\rint} \bar D^{(q)}_{\infty}(I^{(n)}_i) \le   n 2^{-n/4}.
\end{equation}
Using the union bound we obtain that 
\begin{multline}
 \bbP \left[ \max_{i\in \lint 1, k_n\rint} \bar D^{(q)}_{\infty}(I^{(n)}_i)\ge   n 2^{-n/4} \right] \le k_n \bbP\left[ \bar D^{(q)}_{\infty}(I^{(n)}_1)\ge   n 2^{-n/4}\right] \\
 \le k_n 2^{n/2} n^{-2}  \bbP\left[ \left(\bar D^{(q)}_{\infty}(I^{(n)}_1)\right)^2   \right]\le C n^{-2}. 
\end{multline}
For the last inequality, we combined  \eqref{upup} and Fatou. We conclude with Borel-Cantelli.
\end{proof}
\noindent To prove \eqref{convdq}, we deduce from our uniform bound \eqref{hiphip} on the second moment of $\sqrt{t} M^{(q)}_t$ a bound for the second moment of  $D^{(q)}_s$,  using conditional expectation.
\begin{lemma}\label{lecondit}
 Given $E\in \cB_b,$ for any fixed $s\ge 0$ we have 
 \begin{equation}\label{ziup}
  D^{(q)}_s(E)= \lim_{t\to \infty} \bbE\left[ \sqrt{\frac{\pi (t-s)}{2}} M^{(q)}_t(E) \ | \ \cF_s\right].
 \end{equation}
Furthermore, the r.h.s.\  is is nondecreasing in $t$ and convergence holds in $L^2$.
\end{lemma}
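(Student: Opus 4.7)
The plan is to compute $\bbE[W^{(q)}_t(x)\mid \cF_s]$ pointwise in $x$ and then integrate over $E$. For $t\ge s$, the increment $B_r:=X^{(s)}_r(x)=X_{s+r}(x)-X_s(x)$ is a standard Brownian motion independent of $\cF_s$ (Section \ref{considerations}), so I would decompose $X_t(x)=X_s(x)+B_{t-s}$, $K_t(x)=K_s(x)+(t-s)$, and the indicator
$$\ind_{A^{(q)}_t(x)}=\ind_{A^{(q)}_s(x)}\,\ind_{\{\forall r\in[0,t-s],\ B_r<\sqrt{2d}r+a(x)\}}, \qquad a(x):=\sqrt{2d}s+q-\bar X_s(x),$$
noting that $a(x)>0$ on the support of $W^{(q)}_s(x)$. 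After factoring the $\cF_s$-measurable piece $W^{(q)}_s(x)$ out of the conditional expectation, the Cameron--Martin formula (Proposition \ref{cameronmartinpro}) absorbs the factor $e^{\sqrt{2d}B_{t-s}-d(t-s)}$ into a measure change under which $B_r=\sqrt{2d}r+\tilde B_r$ for a standard Brownian motion $\tilde B$. The constraint becomes $\{\sup_{r\in[0,t-s]}\tilde B_r<a(x)\}$, and Lemma \ref{stupid} gives
$$\sqrt{\tfrac{\pi(t-s)}{2}}\,\bbE\!\left[W^{(q)}_t(x)\mid \cF_s\right]=W^{(q)}_s(x)\int_0^{a(x)}e^{-z^2/(2(t-s))}\,dz.$$

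The right-hand side is monotone nondecreasing in $t$ (the integrand grows with $t$) and converges pointwise to $W^{(q)}_s(x)\,a(x)=Z^{(q)}_s(x)$. Integrating over $E$ by Tonelli (all quantities are nonnegative) exchanges $\int_E$ with $\bbE[\,\cdot\mid \cF_s]$, and the monotone convergence theorem then yields
$$\bbE\!\left[\sqrt{\tfrac{\pi(t-s)}{2}}\,M^{(q)}_t(E)\,\middle|\,\cF_s\right]\;\nearrow\;D^{(q)}_s(E)\qquad\text{a.s.\ and in }L^1,$$
which gives simultaneously the monotonicity and the almost sure convergence stated in the lemma.

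To upgrade to $L^2$, I would combine Jensen's inequality with the uniform bound \eqref{hiphip} of Proposition \ref{lateknik}:
$$\bbE\!\left[\bbE[\sqrt{\pi(t-s)/2}\,M^{(q)}_t(E)\mid \cF_s]^{\,2}\right]\le \tfrac{\pi(t-s)}{2}\,\bbE\!\left[(M^{(q)}_t(E))^2\right],$$
which is bounded uniformly in $t$ by essentially $\tfrac{\pi}{2}C_{R,q}\,\varphi(|E|)$ whenever $E\subset B(0,R)$. Fatou then places $D^{(q)}_s(E)$ in $L^2$, and since the approximating sequence is monotone bounded above by $D^{(q)}_s(E)\in L^2$, dominated convergence upgrades a.s.\ convergence to $L^2$. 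The only delicate step is the Cameron--Martin manipulation inside the conditional expectation -- specifically, verifying that the indicator $\ind_{A^{(q)}_t(x)}$ splits cleanly into an $\cF_s$-measurable piece and a future-increment event of exactly the form required by Lemma \ref{stupid}; once that bookkeeping is correct, the rest is monotone and dominated convergence together with a direct appeal to \eqref{hiphip}.
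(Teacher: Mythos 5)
Your computation is exactly the paper's: factoring out $W^{(q)}_s(x)$, applying Cameron--Martin to the increment Brownian motion and Lemma \ref{stupid} to get $\sqrt{\pi(t-s)/2}\,\bbE[W^{(q)}_t(x)\mid\cF_s]=W^{(q)}_s(x)\,\mathfrak g_{t-s}(\sqrt{2d}s+q-\bar X_s(x))$, and then using the monotone convergence $\mathfrak g_r(u)\uparrow u_+$; this is correct, including the indicator splitting. The only difference is cosmetic: where the paper invokes dominated convergence twice, you justify the $L^2$ upgrade via Jensen, the bound \eqref{hiphip} and Fatou (which is legitimate and non-circular, since \eqref{hiphip} is proved independently in the appendix, and in fact anticipates the argument of Proposition \ref{convdq}).
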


\begin{proof}
 It is sufficient to show that for any $x\in E$
 \begin{equation}\label{gross}
 Z^{(q)}_s(x)= \lim_{t\to \infty} \bbE\left[ \sqrt{\frac{\pi (t-s)}{2}} W^{(q)}_t(x) \ | \ \cF_s\right]
 \end{equation}
 monotonically.
The dominated convergence theorem used twice (first for $\int_E$ and then for $\bbE[ (D^{(q)}_s- \dots)^2]$)  implies the desired  $L^2$ convergence.
Omitting the variable $x$ for readability, factoring out  $W^{(q)}_s$ (which is $\cF_s$ measurable) and 
then using the Cameron-Martin formula for the Brownian Motion $(X^{(s)}_{\cdot}(x))$ and Lemma \ref{stupid} we obtain that
\begin{equation}\label{laconvmone}\begin{split}
  \bbE\left[  W^{(q)}_t \ | \ \cF_s\right]&= W^{(q)}_s  \bbE\left[e^{\sqrt{2d}X^{(s)}_{t-s}-d (t-s)}\ind_{\{ \forall u\in [0,t-s], \ X^{(s)}_{u} 
  \le  \sqrt{2d}(s+u)+q-\bar X_s\}} 
   \ | \ \cF_s \right]\\
\\&=W^{(q)}_s \bbP\left[  \sup_{u\in [0,t-s]} X^{(s)}_{u} \le  \sqrt{2d} s+q- \bar X_s  \ | \ \cF_s \right]\\&= W^{(q)}_s \sqrt{\frac{2}{\pi(t-s)}}  \mathfrak g_{t-s} ( \sqrt{2d} s+q- \bar X_s).
\end{split}\end{equation}
 The  result is thus a consequence of the fact that $\lim_{r\to \infty}\mathfrak g_{r}(u)=u_+$ monotonically .
\end{proof}

\begin{proof}[Proof of Proposition \ref{convdq}]
Using Lemma \ref{lecondit} we have 
 \begin{multline}
  \bbE\left[ (D^{(q)}_s(E))^2 \right]= \lim_{t\to \infty} \bbE\left[  \bbE\left[ \sqrt{\frac{\pi (t-s)}{2}} M^{(q)}_t(E) \ | \ \cF_s \right]^2 \right]
  \\ \le \limsup_{t\to \infty} \bbE\left[  \left(\sqrt{\frac{\pi (t-s)}{2}} M^{(q)}_t(E)\right)^2 \right] = \limsup_{t\to \infty} \bbE\left[  \frac{\pi t}{2} (M^{(q)}_t(E))^2  \right].
 \end{multline}
We can then conclude using the estimate \eqref{hiphip}.
\end{proof}

\begin{rem}
  The use of \eqref{ziup} in the above proof  is convenient since the moments of $\sqrt{t} M^{(q)}_t$ is slightly easier to compute than that of $D^{(q)}_t$, but the most important application of Lemma \ref{lecondit}  comes in the next section. It allows to deduce almost without efforts the convergence of $\sqrt{t} M^{(q)}_t$ (which has no martingale structure) from that of $D^{(q)}_t$.
  \end{rem}

\section{The convergence of $M_t$} \label{mmmt}

\noindent We prove Theorem \ref{critmartin} by showing that $\sqrt{\pi t/2}M^{(q)}_t$, converges to the same limit as $D^{(q)}_t$.
\begin{proposition}\label{dor}
For any $E\in \mathcal B_b$ and $q\ge 0$, we have the following convergence in $L^2$
 \begin{equation}
 \lim_{t\to \infty} \sqrt{\frac{\pi t}{2}} M^{(q)}_t(E)= \bar D^{(q)}_{\infty}(E).
 \end{equation}
 
\end{proposition}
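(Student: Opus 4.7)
The plan is to establish the $L^2$ convergence $N_t := \sqrt{\pi t/2}\,M^{(q)}_t(E) \to \bar D^{(q)}_\infty(E)$ by expanding $\bbE[(N_t - \bar D^{(q)}_\infty(E))^2]$ and separately showing that $\bbE[N_t^2]$ and the cross term $\bbE[N_t\,\bar D^{(q)}_\infty(E)]$ both tend to $\bbE[\bar D^{(q)}_\infty(E)^2]$. The central input is Lemma~\ref{lecondit}: since $\sqrt{t/(t-s)}\to 1$, it implies that for each fixed $s$ the conditional expectation $\bbE[N_t\,|\,\cF_s]$ converges in $L^2$ to $D^{(q)}_s(E)$ as $t\to\infty$.

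\textbf{Second moment.} The lower bound $\liminf_t \bbE[N_t^2]\ge\bbE[\bar D^{(q)}_\infty(E)^2]$ follows at once from conditional Jensen ($\bbE[N_t^2]\ge \bbE[\bbE[N_t\,|\,\cF_s]^2]$) combined with the $L^2$-convergence of the martingale $D^{(q)}_s(E)$ to $\bar D^{(q)}_\infty(E)$ given by Proposition~\ref{convdq}. For the matching upper bound, I split
\begin{equation*}
\bbE[N_t^2] = \frac{\pi t}{2}\int_{E^2}\bbE[W^{(q)}_t(x)W^{(q)}_t(y)]\dd x\dd y
\end{equation*}
according to whether $|x-y|\ge e^{-s}$ or $|x-y|<e^{-s}$. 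On the off-diagonal region, the finite range of dependence recalled in Section~\ref{considerations} (the kernel $Q_u$ has support in $\{|x-y|\le e^{-u}\}$) makes $X^{(s)}_\cdot(x)$ and $X^{(s)}_\cdot(y)$ conditionally independent given $\cF_s$, hence
\begin{equation*}
\bbE[W^{(q)}_t(x)W^{(q)}_t(y)\,|\,\cF_s]=\bbE[W^{(q)}_t(x)\,|\,\cF_s]\,\bbE[W^{(q)}_t(y)\,|\,\cF_s].
\end{equation*}
Substituting the explicit expression \eqref{laconvmone}, the factor $\pi t/2$ reduces to $t/(t-s)\to 1$, and since $\mathfrak g_{t-s}(a)\uparrow a_+$ and $W^{(q)}_s(x)(q+\sqrt{2d}s-\bar X_s(x))_+ = Z^{(q)}_s(x)$, dominated convergence yields
\begin{equation*}
\lim_{t\to\infty}\frac{\pi t}{2}\int_{E^2,\,|x-y|\ge e^{-s}}\!\!\bbE[W^{(q)}_t(x)W^{(q)}_t(y)]\dd x\dd y \le \bbE[(D^{(q)}_s(E))^2] \le \bbE[\bar D^{(q)}_\infty(E)^2].
\end{equation*}
The near-diagonal piece is absorbed by \eqref{whendelta}: its contribution is bounded by some $\eta(s)$ independent of $t$, with $\eta(s)\to 0$ as $s\to\infty$. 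Letting $s\to\infty$ we obtain $\limsup_t\bbE[N_t^2]\le \bbE[\bar D^{(q)}_\infty(E)^2]$.

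\textbf{Cross term and conclusion.} Writing $\bar D^{(q)}_\infty(E)=D^{(q)}_s(E)+R_s$ with $\bbE[R_s^2]\to 0$,
\begin{equation*}
\bbE[N_t\,\bar D^{(q)}_\infty(E)] = \bbE\bigl[\bbE[N_t\,|\,\cF_s]\,D^{(q)}_s(E)\bigr] + \bbE[N_t R_s].
\end{equation*}
By Cauchy--Schwarz and the uniform bound $\sup_t \bbE[N_t^2]<\infty$ from the second moment analysis, $|\bbE[N_t R_s]| \le \bbE[N_t^2]^{1/2}\bbE[R_s^2]^{1/2}\to 0$ as $s\to\infty$ uniformly in $t$. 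The first summand tends to $\bbE[(D^{(q)}_s(E))^2]$ as $t\to\infty$, which in turn tends to $\bbE[\bar D^{(q)}_\infty(E)^2]$ as $s\to\infty$. Expanding $\bbE[(N_t-\bar D^{(q)}_\infty(E))^2]=\bbE[N_t^2]-2\bbE[N_t\bar D^{(q)}_\infty(E)]+\bbE[\bar D^{(q)}_\infty(E)^2]$ and assembling the three limits delivers the desired $L^2$ convergence.

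\textbf{Main obstacle.} The delicate step is the upper bound on $\bbE[N_t^2]$: one must leverage the conditional factorization of $\bbE[W^{(q)}_t(x)W^{(q)}_t(y)\,|\,\cF_s]$ afforded by the finite-range dependence to reintroduce the $\cF_s$-measurable quantity $Z^{(q)}_s$, while simultaneously using the near-diagonal negligibility \eqref{whendelta} to keep the error from the region $\{|x-y|<e^{-s}\}$ negligible. Once this sharp second moment is obtained, Lemma~\ref{lecondit} identifies, essentially for free, the $L^2$ limit of the non-martingale sequence $N_t$ with $\bar D^{(q)}_\infty(E)$, confirming the claim made in the remark that the convergence of $\sqrt{t}\,M^{(q)}_t$ follows almost without efforts from that of $D^{(q)}_t$.
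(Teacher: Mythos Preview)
Your proof is correct and relies on the same three ingredients as the paper's: Lemma~\ref{lecondit}, the conditional independence of $W^{(q)}_t(x)$ and $W^{(q)}_t(y)$ given $\cF_s$ when $|x-y|\ge e^{-s}$, and the near-diagonal negligibility~\eqref{whendelta}. The only difference is organizational. You expand $\bbE[(N_t-\bar D^{(q)}_\infty(E))^2]$ and separately match $\bbE[N_t^2]$ and the cross term to $\bbE[\bar D^{(q)}_\infty(E)^2]$; this forces you to compute the off-diagonal limit $\frac{\pi t}{2}\bbE[W^{(q)}_t(x)W^{(q)}_t(y)]\to\bbE[Z^{(q)}_s(x)Z^{(q)}_s(y)]$ explicitly and then handle the cross term via Cauchy--Schwarz. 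The paper instead writes
\[
N_t-D^{(q)}_s(E)=\bigl(N_t-\bbE[N_t\mid\cF_s]\bigr)+\bigl(\bbE[N_t\mid\cF_s]-D^{(q)}_s(E)\bigr)
\]
and uses orthogonality: the second piece goes to zero by Lemma~\ref{lecondit}, and in the first piece the off-diagonal contributions $\bbE[\xi_{s,t}(x)\xi_{s,t}(y)]$ vanish \emph{identically} (not just in the limit) thanks to the centering, so only the near-diagonal term remains and is dispatched by~\eqref{whendelta}. The paper's route is a little shorter because the orthogonal decomposition absorbs both your second-moment and cross-term computations into one step, but the substance is the same.
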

\begin{proof}[Proof of Theorem \ref{critmartin}]
 Using \eqref{zob} and Proposition \ref{tiptop} 
  there exists a random $q_0\in \bbN$ such that for $q\ge q_0$, 
  $$\bar D^{(q)}_{\infty}(E)=\bar D_{\infty}(E) \quad \text{ and  } \quad  M^{(q)}_t(E)= M_t(E).$$ 
Using Proposition \ref{dor} we conclude that $\sqrt{\pi t/2} M_t(E)$ converges  to $\bar D_\infty(E)$ in probability.
To obtain the weak convergence of the measures, we  apply Proposition \ref{weako}.
  \end{proof}

\begin{proof}[Proof of Proposition \ref{dor}] We drop $E$ from the notation for better readability.
 Since $D^{(q)}_s$ converge in $L^2$, it is sufficient to prove that 
 \begin{equation}
 \lim_{s\to \infty} \limsup_{t\to \infty}\bbE\left[  \left|\sqrt{\frac{\pi t}{2}} M^{(q)}_t-D^{(q)}_s\right|^2 \right]=0.
 \end{equation}
Using the conditional expectation to make an orthogonal decomposition in $L^2$ we have
\begin{multline}\label{ladekomp}
 \bbE\left[  \left|\sqrt{\frac{\pi t}{2}} M^{(q)}_t-D^{(q)}_s\right|^2 \right]\\=
 \frac{\pi t}{2} \bbE\left[  \left| M^{(q)}_t-\bbE\left[  M^{(q)}_t \ | \ \cF_s \right]\right|^2 \right]
+ \bbE \! \left[  \! \left( \!
\bbE \! \left[ \! \sqrt{\frac{\pi t}{2}} M^{(q)}_t \ | \ \cF_s\right]- D^{(q)}_s \!  \right)^2 \! \right].
\end{multline}
From Lemma \ref{lecondit}, the second term in the r.h.s.\ of \eqref{ladekomp} tends to zero  when $t\to \infty$.
To control the first term,  we set  (recall \eqref{trunkeq})
$$  \xi_{s,t}(x):= W^{(q)}_{t}(x)- \bbE\left[ W^{(q)}_{t}(x) \ | \ \cF_s \right].$$
Expanding the square we obtain
\begin{equation}\label{xixi}
\bbE\left[  \left| M^{(q)}_t-\bbE\left[  M^{(q)}_t \ | \ \cF_s \right]\right|^2 \right] =\int_{E^2} \bbE\left[ \xi_{s,t}(x) \xi_{s,t}(y)   \right] \dd x \dd y.
\end{equation}
Recalling Section \ref{considerations} (in particular the definition \eqref{barxx})
we note that $\xi_{s,t}(x)$ is measurable with respect to $\cF_s\vee \sigma( (X^{(s)}_u(x))_{u\ge 0})$.
        If $|x-y|\ge e^{-s}$ then  $X^{(s)}_{\cdot}(x)$ and $X^{(s)}_{\cdot}(y)$  are  independent and both processes are independent of $\cF_s$. This implies the  conditional independence of $\xi_{s,t}(x)$ and  $\xi_{s,t}(y)$ given $\cF_s$ and hence
\begin{equation}\label{czer}
\bbE\left[ \xi_{s,t}(x) \xi_{s,t}(y) \ | \ \cF_s  \right]= \bbE\left[ \xi_{s,t}(x) \ | \ \cF_s  \right] \bbE\left[ \xi_{s,t}(y)  \ | \ \cF_s \right]=0.
\end{equation}
On the other hand when $|x-y|\le e^{-s}$, we have 
\begin{equation}\label{cpzer}
 \bbE\left[ \xi_{s,t}(x) \xi_{s,t}(y)\right]\le 
  \bbE\left[ W^{(q)}_{t}(x) W^{(q)}_{t}(y)\right].
\end{equation}
Thus combining \eqref{czer} and \eqref{cpzer} we have
\begin{equation}\label{czer3}
\bbE\left[  \left| M^{(q)}_t-\bbE\left[  M^{(q)}_t \ | \ \cF_s \right]\right|^2 \right]
\le \int_{E^2} \bbE\left[ W^{(q)}_{t}(x) W^{(q)}_{t}(y)   \right] \ind_{\{|x-y|\le e^{-s}\} } \dd x \dd y.
\end{equation}
 Using Equation \eqref{whendelta} (first line) from  Proposition \ref{lateknik}, we obtain that 
 \begin{equation}
   \lim_{s\to \infty} \limsup_{t\to \infty}\frac{\pi t}{2} \bbE\left[  \left| M^{(q)}_t-\bbE\left[  M^{(q)}_t \ | \ \cF_s \right]\right|^2 \right]=0.
   \end{equation}
Recalling \eqref{ladekomp} this concludes the proof.
\end{proof}

\section{The convergence of $M_{\gep}$}\label{mmmgep}

\noindent The strategy of the previous section can be adapted to prove the convergence of $M^{}_{\gep}$. 
We  show that $\sqrt{{\pi \log(1/\gep)}/{2}} M^{(q)}_\gep(E)$ converges to the same limit as $D^{(q)}_t(E)$ and $M^{(q)}_t(E)$.

\begin{proposition}\label{gepqgep}
 For any $E\in \mathcal B_b$ and $q\ge 0$, we have the following convergence in $L_2$
 \begin{equation}
 \lim_{t\to \infty} \sqrt{\frac{\pi \log(1/\gep)}{2}} M^{(q)}_\gep(E)= \bar D^{(q)}_{\infty}(E)
 \end{equation}
\end{proposition}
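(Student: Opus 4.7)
My plan is to mirror the proof of Proposition \ref{dor} with the continuous time $t$ replaced by the mollification scale $t_\gep := \log(1/\gep)$, and with Lemma \ref{lecondit} replaced by an analogous conditional expectation identity for $M^{(q)}_\gep$, namely that for each fixed $s \ge 0$ and $E \in \cB_b$,
\begin{equation*}
\lim_{\gep \to 0}\bbE\!\left[\sqrt{\pi t_\gep/2}\, M^{(q)}_\gep(E) \,\Big|\, \mathcal F_s\right] = D^{(q)}_s(E) \quad \text{in } L^2.
\end{equation*}
Since $D^{(q)}_s \to \bar D^{(q)}_\infty$ in $L^2$ as $s \to \infty$ by Proposition \ref{convdq}, it then suffices to show
\begin{equation*}
\lim_{s \to \infty}\limsup_{\gep \to 0}\bbE\!\left[\Big(\sqrt{\pi t_\gep/2}\, M^{(q)}_\gep(E) - D^{(q)}_s(E)\Big)^{\!2}\right] = 0.
\end{equation*}

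Writing $A_\gep := \sqrt{\pi t_\gep/2}\, M^{(q)}_\gep(E)$ and decomposing orthogonally (using that $\bbE[A_\gep|\mathcal F_s] - D^{(q)}_s(E)$ is $\mathcal F_s$-measurable), the difference splits as
\begin{equation*}
\bbE[(A_\gep - D^{(q)}_s)^2] = \bbE\!\left[(A_\gep - \bbE[A_\gep|\mathcal F_s])^2\right] + \bbE\!\left[(\bbE[A_\gep|\mathcal F_s] - D^{(q)}_s)^2\right].
\end{equation*}
The second term vanishes as $\gep \to 0$ by the identity stated above. For the first term, I would set $\xi_{s,\gep}(x) := W^{(q)}_\gep(x) - \bbE[W^{(q)}_\gep(x)|\mathcal F_s]$ and invoke the finite-range independence discussed in Section \ref{considerations}: whenever $|x - y| \ge e^{-s} + 2\gep$, the random variables $\xi_{s,\gep}(x)$ and $\xi_{s,\gep}(y)$ are conditionally independent given $\mathcal F_s$, so $\bbE[\xi_{s,\gep}(x)\xi_{s,\gep}(y)] = 0$. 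Consequently the first term is bounded by
\begin{equation*}
\frac{\pi t_\gep}{2}\int_{E^2}\bbE\!\left[W^{(q)}_\gep(x)\,W^{(q)}_\gep(y)\right]\ind_{|x-y|\le e^{-s}+2\gep}\,\dd x\,\dd y,
\end{equation*}
which tends to $0$ as $s \to \infty$ uniformly in $\gep$ by the second line of \eqref{whendelta}.

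To establish the conditional expectation identity, I would proceed pointwise in $x$: decompose $X_\gep(x) = X_{s,\gep}(x) + (X_\gep(x) - X_{s,\gep}(x))$ (the second summand being orthogonal, hence independent of $\mathcal F_s$), factor out the $\mathcal F_s$-measurable exponential $e^{\sqrt{2d}X_{s,\gep}(x)-dK_{s,\gep}(x)}$, and apply Cameron-Martin to the remaining factor under $\bbE[\cdot\,|\,\mathcal F_s]$. The resulting tilt shifts $\bar X_u(x) - \bar X_s(x)$ (for $u > s$) by $\sqrt{2d}(\bar K_{u,\gep,0}(x,x) - \bar K_{s,\gep,0}(x,x))$, which by \eqref{thelogbound} equals $\sqrt{2d}(u - s)$ up to an $O(1)$ remainder. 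The event $A^{(q)}_{t_\gep}(x)$ then factorizes as $\ind_{A^{(q)}_s(x)}$ (for $u \le s$) times the event that an independent Brownian motion stays below a near-constant barrier of height $\sqrt{2d}s + q - \bar X_s + o(1)$ over time $t_\gep - s$. Applying Lemma \ref{stupid} and invoking the continuity limits $X_{s,\gep}(x) \to X_s(x)$, $K_{s,\gep}(x) \to K_s(x)$, $\bar K_{s,\gep,0}(x,x) \to s$ as $\gep \to 0$, I get $\sqrt{\pi t_\gep/2}\bbE[W^{(q)}_\gep(x)|\mathcal F_s] \to Z^{(q)}_s(x)$ pointwise; integration over $E$ with dominated convergence (justified by the moment bounds of Proposition \ref{lateknik}) yields the $L^2$ convergence claimed.

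The main obstacle is the last step: the remainder $\sqrt{2d}(\bar K_{u,\gep,0}(x,x) - u - \bar K_{s,\gep,0}(x,x) + s)$ in the Cameron-Martin shift does not vanish uniformly in $u \in [s, t_\gep]$ as $\gep \to 0$, since $\bar K_{u,\gep,0}(x,x) - u$ remains of order $1$ in an $O(1)$-wide boundary zone near $t_\gep$ and only decays exponentially in $t_\gep - u$ away from it. The resolution I have in mind is to truncate the path of the auxiliary Brownian motion at time $t_\gep - s - T$ with $T = T(\gep) \to \infty$ slowly enough that $T/t_\gep \to 0$; the boundary contribution from $[t_\gep - s - T, t_\gep - s]$ is handled by a standard entrance-law/Markov-property estimate, while on the bulk interval the shift is uniformly close to $\sqrt{2d}(u - s)$ and Lemma \ref{stupid} applies cleanly.
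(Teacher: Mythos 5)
Your proposal follows essentially the same route as the paper's: the conditional-expectation identity you postulate is exactly Lemma \ref{lecondit2}, and the rest of your argument (orthogonal decomposition, conditional independence of $\xi_{s,\gep}(x)$ and $\xi_{s,\gep}(y)$ given $\cF_s$ when $|x-y|\ge e^{-s}+2\gep$, and the second line of \eqref{whendelta}) coincides with the paper's proof of Proposition \ref{gepqgep}, which mirrors Proposition \ref{dor}. The one place you diverge is precisely the obstacle you flag at the end, and there the paper's resolution is simpler than your truncation-plus-entrance-law plan: since $Q_v\le 1$, the Cameron--Martin shift satisfies $K^{(s)}_{u,\gep,0}\le u$, so the tilted barrier lies everywhere above the constant barrier $q+\sqrt{2d}s-\bar X_s$ and Lemma \ref{stupid} gives the matching lower bound over the full horizon with no work; for the upper bound one simply discards the barrier constraint on the final stretch $[\bar t_\gep-s,\,t_\gep-s]$ with $\bar t_\gep:=\log(1/\gep)-\sqrt{\log(1/\gep)}$, on the retained interval the deficit $u-K^{(s)}_{u,\gep,0}$ is at most $Ce^{-\sqrt{\log(1/\gep)}}$, and no boundary or Markov-property estimate is needed since dropping constraints already yields an upper bound. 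Your sketched route would work too (your choice $T\to\infty$, $T/t_\gep\to 0$ is in effect the paper's cut with $T=\sqrt{t_\gep}$), as would your dominated-convergence step, though the paper instead establishes the pointwise $L^2$ convergence uniformly in $x\in E$ via the product decomposition $W^{(q)}_{s,\gep}\cdot V^{(q)}_{s,\gep}$ and $L^4$ bounds on each factor; so I see no gap, only a heavier path through the one technical lemma.
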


\begin{proof}[Proof of Theorem \ref{mainres}]
A first important observation is  that the veracity of the statement ``$M^{}_\gep(E)$ converges in probability to the same limit for all choices of mollifier''  only depends the distribution of the process $X$ (recall that this is a process indexed by $C_c(\bbR^d)$ with covariance given by \eqref{hatK}).
Hence without loss of generality, it is sufficient to prove it in the case were the  probability space contains the martingale approximation sequence $(X_t)$ described in Section \ref{martindecoco}.
In that case, combining  Propositions \ref{tiptop} and \ref{gepqgep} in the same way as in the proof of Theorem \ref{critmartin}, we obtain \eqref{inproba} with $M'(E)=\bar D_{\infty}(E)$.
In particular the limit does not depend on the mollifying kernel.
\end{proof}
\noindent The proof of Proposition \ref{gepqgep} is very similar to that of Proposition \ref{dor} but we need the following replacement for Lemma \ref{lecondit}.
\begin{lemma}\label{lecondit2}
  For any fixed $s\ge 0$ we have the following convergence in $L^2$.
 \begin{equation}
  D^{(q)}_s(E)= \lim_{\gep \to 0} \bbE\left[ \sqrt{\frac{\pi (\log (1/\gep))}{2}} M^{(q)}_\gep(E) \ | \ \cF_s\right].
 \end{equation}
 
\end{lemma}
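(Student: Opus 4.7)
The plan is to mirror the proof of Lemma \ref{lecondit}, adapted to the convolution approximation. It suffices to establish the pointwise convergence
\begin{equation*}
  \sqrt{\pi t_\gep/2}\,\bbE\bigl[W^{(q)}_\gep(x)\mid \cF_s\bigr] \to Z^{(q)}_s(x)
\end{equation*}
for each $x\in E$ (with $t_\gep=\log(1/\gep)$), and then to extend to $L^2$ convergence of $\sqrt{\pi t_\gep/2}\,M^{(q)}_\gep(E)$ by dominated convergence exactly as in the proof of Lemma \ref{lecondit}, using the uniform second-moment estimate \eqref{tek2} for domination.

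For the pointwise step, I use the orthogonal decomposition $X_\gep(x) = X_{s,\gep}(x) + Y^{(s)}_\gep(x)$, where $Y^{(s)}_\gep(x) := X_\gep(x) - X_{s,\gep}(x)$ is independent of $\cF_s$ with variance $V^{(s)}_\gep(x) := K_\gep(x) - K_{s,\gep}(x)$. Splitting $\ind_{A^{(q)}_{t_\gep}(x)} = \ind_{A^{(q)}_s(x)}\ind_{\{\forall v\in[0,t_\gep-s],\, X^{(s)}_v(x)<\sqrt{2d}\,v + h(x)\}}$ with $h(x):=\sqrt{2d}\,s + q - \bar X_s(x)$, factoring out the $\cF_s$-measurable exponential and applying Proposition \ref{cameronmartinpro} to tilt by $e^{\sqrt{2d}\,Y^{(s)}_\gep(x) - d\,V^{(s)}_\gep(x)}$, the conditional expectation becomes
\begin{equation*}
  \bbE\bigl[W^{(q)}_\gep(x)\mid\cF_s\bigr] = e^{\sqrt{2d}\,X_{s,\gep}(x)-d\,K_{s,\gep}(x)}\,\ind_{A^{(q)}_s(x)}\cdot\tilde{\bbP}\bigl[\forall v\in[0,t_\gep-s],\ \tilde B_v < h(x) + r_\gep(v)\bigr],
\end{equation*}
where $\tilde B$ is a standard Brownian motion under $\tilde{\bbP}$ and $r_\gep(v):=\sqrt{2d}(v-\sigma_v)$, with $\sigma_v := \bbE[Y^{(s)}_\gep(x)\,X^{(s)}_v(x)]$.

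A short calculation from the definition of $Q_u$ gives $\sigma_v = \int_s^{s+v} F_\gep(u)\,du$ with $F_\gep(u):=\int\theta_\gep(-z)\kappa(e^{u'}z)\,dz\in[0,1]$. Using $1-\kappa(w)=O(|w|^2)$ for $w$ near $0$ (valid since $\kappa$ is smooth, radial, with $\kappa(0)=1$) yields $1-F_\gep(u)\leq C\min(e^{-2(t_\gep-u')},1)$; integrating in $u$ shows that $r_\gep(v)$ is uniformly bounded in $(v,\gep)$ and that $r_\gep(v)\to 0$ uniformly on $v\in[0,(1-\delta)(t_\gep-s)]$ for each fixed $\delta>0$. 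Since $r_\gep\ge 0$, the target probability is sandwiched by
\begin{equation*}
  \tilde{\bbP}\bigl[\sup_{v\le t_\gep-s}\tilde B_v\le h(x)\bigr] \;\leq\; \tilde{\bbP}\bigl[\forall v,\ \tilde B_v< h(x) + r_\gep(v)\bigr] \;\leq\; \tilde{\bbP}\bigl[\sup_{v\le(1-\delta)(t_\gep-s)}\tilde B_v\le h(x) + o_\gep(1)\bigr].
\end{equation*}
By Lemma \ref{stupid}, after multiplication by $\sqrt{\pi t_\gep/2}$ the lower bound converges to $h(x)_+$ and the upper bound converges to $h(x)_+/\sqrt{1-\delta}$; letting $\delta\to 0$ yields the common limit $h(x)_+$. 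Combined with the convergence $e^{\sqrt{2d}X_{s,\gep}(x)-dK_{s,\gep}(x)}\to e^{\sqrt{2d}X_s(x)-dK_s(x)}$ (by continuity of $X_s$ and of $K_s$), this gives $\sqrt{\pi t_\gep/2}\,\bbE[W^{(q)}_\gep(x)\mid\cF_s] \to h(x)\,W^{(q)}_s(x) = Z^{(q)}_s(x)$ pointwise, and the same upper bound furnishes the $x$-integrable and $\omega$-square-integrable domination needed to pass to the $L^2$ statement.

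The main obstacle, absent in Lemma \ref{lecondit} where the tilt via the Brownian increment $X^{(s)}_{t-s}$ produced the barrier $h(x)$ exactly, is the nonvanishing barrier perturbation $r_\gep(v)$ arising because the Cameron-Martin tilt via $Y^{(s)}_\gep$ no longer aligns perfectly with the increments of $X^{(s)}_\cdot$. The sandwich above, exploiting the exponential decay of $1-F_\gep$ on compact fractions of $[s,t_\gep]$, is what resolves it.
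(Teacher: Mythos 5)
Your proposal follows essentially the same route as the paper: the same splitting of the indicator at time $s$, the same Cameron--Martin tilt by the residual $X_\gep-X_{s,\gep}$ producing a drift defect $\sqrt{2d}(v-\sigma_v)$, the same quadratic bound $1-\kappa(w)\le C\min(|w|^2,1)$ to show the defect is uniformly small on a slightly shortened horizon, and the same Brownian barrier estimates from Lemma \ref{stupid}; the paper shortens the horizon to $\bar t_\gep=t_\gep-\sqrt{t_\gep}$ instead of $(1-\delta)(t_\gep-s)$, which avoids your extra $\delta\to0$ limit but is otherwise the same sandwich. The pointwise computation in your argument is correct.

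The one place you are imprecise is the passage from pointwise convergence to the $L^2$ statement. Citing \eqref{tek2} ``for domination'' does not do the job: \eqref{tek2} only gives $\sup_\gep$ of the second moment of $\sqrt{t_\gep}\,M^{(q)}_\gep(E)$ (hence, by conditional Jensen, of its conditional expectation), and $L^2$-boundedness together with a.s.\ convergence yields only $L^1$ convergence, not $L^2$. Your sandwich bound can indeed be upgraded to a genuine dominating variable, but this requires replacing the $\gep$-dependent prefactor $e^{\sqrt{2d}X_{s,\gep}(x)-dK_{s,\gep}(x)}$ by something $\gep$-free, e.g.\ $X_{s,\gep}(x)\le\sup_{y\in B(x,1)}X_s(y)$ together with finiteness of exponential moments of the supremum of the continuous field $X_s$ on a compact (Borell--TIS/Fernique), after which dominated convergence gives the $L^2$ limit. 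The paper sidesteps this by instead proving convergence in $L^2$, uniformly in $x\in E$, of the two factors $W^{(q)}_{s,\gep}(x)$ and $V^{(q)}_{s,\gep}(x)$ separately (via an elementary fourth-moment product inequality and the explicit upper/lower bounds \eqref{dnz} and \eqref{upz}), and then integrates over the bounded set $E$. So your argument is sound in substance, but the final domination step needs to be stated and justified correctly rather than attributed to \eqref{tek2}.
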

\begin{proof}
As there is no monotonicity in $\gep$, the proof of the $L^2$ convergence is more technical than that of Lemma \ref{lecondit}.
Using the convention $t=t_{\gep}=(\log 1/\gep)$, we need to show that
\begin{equation}\label{gregre}
 \lim_{\gep \to 0} \sup_{x\in E} \bbE\left[ \left( Z^{(q)}_s(x)- \bbE\left[ \sqrt{\frac{\pi t}{2}} W^{(q)}_\gep(x) \ | \ \cF_s\right]\right)^2 \right]=0.
\end{equation}
The uniformity in \eqref{gregre} implies that the convergence in $L^2$ is maintained after integrating w.r.t.\ $x$ over $E$. Taking $\cF_s$-measurable 
terms out of the expectation we obtain
\begin{equation}
   \bbE[  \sqrt{{\pi t}/{2}}  W^{(q)}_\gep(x) \ | \ \cF_s ] = W^{(q)}_{s,\gep}(x) \times  V^{(q)}_{s,\gep}(x),
 \end{equation}
 where
 \begin{equation}\begin{split}
 W^{(q)}_{s,\gep}(x)&:= e^{\sqrt{2d}X_{s,\gep}(x)-d K_{s,\gep}(x)} \ind_{A^{(q)}_s(x)},\\
  V^{(q)}_{s,\gep}(x)&:=   \sqrt{\frac{\pi t }{2}} \bbE\left[ e^{\sqrt{2d}(X_{\gep}-X_{s,\gep})(x)- d(K_{\gep}-K_{s,\gep})(x)}
  \ind_{\{\forall u\in [s,t],\  \bar X_u(x)\le q+ \sqrt{2d}u \}   } \ | \ \cF_s \right].
   \end{split}\end{equation}
 In view of the inequality 
 $\bbE[ (A_tB_t-AB)^2]^2\le 4 \left(\bbE[(A_t-A)^4]\bbE[B^4_t] +  \bbE[(B_t-B)^4]\bbE[ A^4] \right)$
 (valid for arbitrary random variables $A,B, A_t$ and $B_t$),
to prove  \eqref{gregre}, it is sufficient to prove the two following convergences,
and that $W^{(q)}_s(x)$ and $(q+\sqrt{2d}s -\bar X_s(x))$ are uniformly bounded in $L^4$ when $x$ varies 
\begin{equation}\label{unil4}\begin{split}
 \lim_{\gep \to 0}\sup_{x\in E} \ & \bbE\left[ (W^{(q)}_{s,\gep}(x)-W^{(q)}_s(x))^2\right]=0,\\
 \lim_{\gep \to 0}\sup_{x\in E} \ & \bbE\left[ (V^{(q)}_{s,\gep}(x)- (q+\sqrt{2d}s -\bar X_s(x))_+)^2 \right]=0.
  \end{split}
\end{equation}
The first line in \eqref{unil4} follows from the the uniform convergence of $K_{s,\gep}(x,y)$  and $K_{s,\gep,0}(x,y)$  towards $K_{s}(x,y)$, which implies that
\begin{equation}
 \lim_{\gep\to 0} \sup_{x\in E}\bbE\left[  \left(e^{\sqrt{2d}X_{s,\gep}(x)-d K_{s,\gep}(x)}- e^{\sqrt{2d}X_{s}(x)-d K_{s}(x)}\right)^4 \right]=0.
\end{equation}
For the second  line in \eqref{unil4}, by translation invariance of $\bar X$, the quantity in the supremum does not depend on $x$. 
Thus we do not need to worry about the $\sup$ and omit $x$ in the notation.  We rewrite the event appearing in the definition of   $V^{(q)}_{s,\gep}$ as
$$\{\forall u\in [0,t-s],\   X^{(s)}_u\le q+ \sqrt{2d}(s+u)- \bar X_s \}.$$
Using the Cameron-Martin formula (Proposition \ref{cameronmartinpro}), 
the exponential tilt in  $f^{(q)}_{s,\gep}$  has the effect of shifting the mean of $X^{(s)}_{u}(x)$ by an amount (recall \eqref{crossover})
 $$\bbE[ (X_\gep-X_{s,\gep})(x)\bar X^{(s)}_u(x)]= K_{s+u,\gep,0}(x)-K_{s,\gep,0}(x) =:K^{(s)}_{u,\gep,0}.$$ 
 Hence we have
\begin{equation}\label{bbbb}
  V^{(q)}_{s,\gep}\!
 =\!\sqrt{\frac{\pi t}{2}}\bbP\left[ \forall u \in [0,t-s], \  \bar X^{(s)}_u\le \sqrt{2d}\left( \! u-K^{(s)}_{u,\gep,0}\right)+(q+\sqrt{2d}s- \bar X_s) \ | \ \cF_s\right].
 \end{equation}
Since
 $K^{(s)}_{u,\gep,0}= \int^{s+u}_s \left(\int_{\bbR^d}Q_v(0,z)\theta_{\gep}(z) \dd z\right) \dd v$
and $Q_v(0,z)\le 1$, we have $K^{(s)}_{u,\gep,0}\le u$. Injecting  this  bound in \eqref{bbbb}, we obtain from Lemma \ref{stupid} 
\begin{equation}\label{dnz}
 V^{(q)}_{s,\gep}
\ge \sqrt{\frac{t}{(t-s)}}\mathfrak g_{t-s}(q+\sqrt{2d}s- \bar X_s).
\end{equation}
To obtain a bound in the other direction, recalling \eqref{defqt} and the regularity of $\kappa$ we obtain that for some universal constant $C$ we have
$(1-Q_{v}(0,z))\le C(e^{v}z)^2$.
Setting $\bar t_{\gep}= \log(1/\gep)-\sqrt{\log 1/\gep}$, we have for any $\gep$ sufficiently large and $u\le \bar t-s$ 
 \begin{multline}
  (u-K^{(s)}_{u,\gep,0})= \int^{s+u}_s \left(\int_{\bbR^d}(1-Q_v(0,z))\theta_{\gep}(z) \dd z\right) \dd v\\ \le C \int^{u+s}_{u} e^{2v}\gep^2 \dd v 
   \le (C/2) e^{2\bar t}\gep^2 \le  \frac{\delta_{\gep}}{\sqrt{2d}}, 
 \end{multline}
 where $\delta_{\gep}= C \sqrt{d/2}  e^{-\sqrt{\log (1/\gep)}}$.
Hence we have 
 \begin{multline}\label{upz}
 V^{(q)}_{s,\gep} \le \sqrt{\frac{\pi t}{2}}\bbP\left[ \forall u \in [0,\bar t-s], \  \bar X^{(s)}_u\le \delta_{\gep}+q+\sqrt{2d}s- \bar X_s \ | \ \cF_s\right]
\\ 
\le  \sqrt{\frac{t}{(\bar t-s)}} \mathfrak g_{\bar t-s}(q+\sqrt{2d}s- \bar X_s+\delta_{\gep}).
\end{multline} 
Both upper \eqref{upz} and lower bound \eqref{dnz} converge to  $(q+\sqrt{2d}s- \bar X_s)_+$ in $L^4$ as   $\gep\to 0$ which conclude the proof of \eqref{unil4} and hence of the lemma.
 \end{proof}

\begin{proof}[Proof of Proposition \ref{gepqgep}]
 Proceeding like  for Proposition \ref{dor}, 
 it is sufficient to prove that 
 \begin{equation}
 \lim_{s\to \infty} \limsup_{\gep\to 0}\bbE\left[  \left|\sqrt{\frac{\pi (\log 1/\gep)}{2}} M^{(q)}_\gep-D^{(q)}_s\right|^2 \right]=0.
 \end{equation}
 After a decomposition like \eqref{ladekomp} and using Lemma \ref{lecondit2}, we only need to prove the following 
 \begin{equation}\label{lllast}
 \lim_{s\to \infty} \limsup_{\gep\to 0} \log  (1/\gep) \bbE\left[  \left| M^{(q)}_\gep-\bbE\left[  M^{(q)}_\gep  \ | \ \cF_s \right] \right|^2 \right]=0.
 \end{equation}
Setting $
 \xi_{s,\gep}(x) := W^{(q)}_{\gep}(x)-\bbE[W^{(q)}_{\gep}(x) \ | \ \cF_s]$ (recall \eqref{trunkeq})
we have like for \eqref{xixi}
\begin{equation}\label{xixi2}
\bbE\left[  \left| M^{(q)}_\gep-\bbE\left[  M^{(q)}_\gep \ | \ \cF_s \right]\right|^2 \right] 
=\int_{E^2} \bbE\left[ \xi_{s,\gep}(x) \xi_{s,\gep}(y)   \right] \dd x \dd y.
\end{equation}
Recalling the observations of Section \ref{considerations}, 
$(X^{(s)}_{\cdot}(x), (X_{\gep}-X_{\gep,s})(x) )$ and  $(X^{(s)}_{\cdot}(y), (X_{\gep}-X_{\gep,s})(y))$ are independent and independent of $\cF_s$ whenever
$|x-y|\ge e^{-s}+2\gep$. Hence  
 $\xi_{s,\gep}(x) $ and $\xi_{s,\gep}(y) $ are conditionally independent given $\cF_s$ and   like for \eqref{czer} we have
$$ |x-y|\ge e^{-s}+2\gep \quad \Rightarrow  \quad  \bbE\left[\xi_{s,\gep}(x) \xi_{s,\gep}(y) \right]=0.$$
Proceeding as in  \eqref{czer3}, this implies 
 \begin{equation}
\bbE\left[  \! \left| M^{(q)}_\gep-\bbE\left[  M^{(q)}_\gep \ | \ \cF_s \right]\right|^2  \! \right]
  \le \int_{E^2} \!\! \!  \bbE\left[ W^{(q)}_{\gep}(x)  W^{(q)}_{\gep}(y) \right] \ind_{\{|x-y|\le e^{-s}+2\gep\}} \dd x \dd y.
 \end{equation}
Using Equation \eqref{whendelta} (second line) we conclude the proof  of  \eqref{lllast}.
\end{proof}

\section{The convergence of subcritical chaos}\label{subkrit}

\noindent As in the previous section, we establish the convergence for the truncated version of $M^{\alpha}$. 
\begin{proposition}\label{stick}
For any $E\in \mathcal B_b$ we have the following convergence in $L^2$
\begin{equation}
 \lim_{\alpha \uparrow \sqrt{2d}} \frac{\bar M^{\alpha,(q)}_{\infty}(E)}{\sqrt{2d}-\alpha}=2\bar D^{(q)}_{\infty}(E)
\end{equation}

\end{proposition}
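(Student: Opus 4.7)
The strategy mirrors Sections \ref{mmmt} and \ref{mmmgep}: establish a conditional-expectation lemma linking $\bar M^{\alpha,(q)}_{\infty}$ with $D^{(q)}_s$, then use an orthogonal decomposition together with the third line of \eqref{whendelta} to control the conditional variance.

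\textbf{Step 1 (pointwise conditional expectation).} Fix $x\in E$ and $s\ge 0$. Factor out the $\cF_s$-measurable term $W^{\alpha,(q)}_s(x)$ and apply the Cameron--Martin formula to the Brownian motion $(X^{(s)}_u(x))_{u\ge 0}$. Following the computation in \eqref{laconvmone} but with drift $\alpha$ instead of $\sqrt{2d}$, we obtain
\begin{equation*}
\bbE\left[W^{\alpha,(q)}_t(x)\,\big|\,\cF_s\right]
= W^{\alpha,(q)}_s(x)\,\bbP\!\left[\forall u\in[0,t-s],\ B_u\le (\sqrt{2d}-\alpha)u + a(x)\,\big|\,\cF_s\right],
\end{equation*}
where $a(x):=\sqrt{2d}\,s+q-\bar X_s(x)\ge 0$ on $A^{(q)}_s(x)$ and $B$ is a standard Brownian motion. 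Letting $t\to\infty$ and using the third estimate of Lemma \ref{stupid}, the conditional probability converges to $1-e^{-2(\sqrt{2d}-\alpha)a(x)}$.

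\textbf{Step 2 (analogue of Lemmas \ref{lecondit}/\ref{lecondit2}).} Combining Step 1 with Fubini, the UI of $M^{\alpha,(q)}_t(E)$ from Lemma \ref{martinmister}, and the dominating bound $\bbE[W^{\alpha,(q)}_t(x)|\cF_s]\le W^{\alpha,(q)}_s(x)$, one obtains
\begin{equation*}
\bbE\!\left[\bar M^{\alpha,(q)}_{\infty}(E)\,\big|\,\cF_s\right]
=\int_E W^{\alpha,(q)}_s(x)\bigl(1-e^{-2(\sqrt{2d}-\alpha)a(x)}\bigr)\,\dd x.
\end{equation*}
Dividing by $\sqrt{2d}-\alpha$ and letting $\alpha\uparrow\sqrt{2d}$, the elementary bound $0\le(1-e^{-y})/y\le 1$ for $y\ge 0$ together with $L^2$-continuity of $\alpha\mapsto W^{\alpha,(q)}_s(x)$ (both the field and $K_s(x)$ are bounded on compacts) give, by dominated convergence on $E$,
\begin{equation*}
\lim_{\alpha\uparrow\sqrt{2d}}\frac{1}{\sqrt{2d}-\alpha}\,\bbE\!\left[\bar M^{\alpha,(q)}_{\infty}(E)\,\big|\,\cF_s\right]
=\int_E 2 a(x) W^{(q)}_s(x)\dd x=2\,D^{(q)}_s(E),
\end{equation*}
with convergence in $L^2$, since $\int_E a(x)W^{\alpha,(q)}_s(x)\dd x$ is uniformly bounded in $L^2$ (uniform in $\alpha$ close to $\sqrt{2d}$) by a computation analogous to \eqref{upup}.

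\textbf{Step 3 (decomposition and conclusion).} For fixed $s$, use the orthogonal decomposition (as in \eqref{ladekomp})
\begin{equation*}
\bbE\!\left[\left|\tfrac{\bar M^{\alpha,(q)}_{\infty}(E)}{\sqrt{2d}-\alpha}-2D^{(q)}_s(E)\right|^2\right]
=\tfrac{1}{(\sqrt{2d}-\alpha)^2}\bbE\!\left[\bigl|\bar M^{\alpha,(q)}_{\infty}(E)-\bbE[\bar M^{\alpha,(q)}_{\infty}(E)|\cF_s]\bigr|^2\right]+o_\alpha(1),
\end{equation*}
the last $o_\alpha(1)$ coming from Step 2. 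By \eqref{tek3}, $(M^{\alpha,(q)}_t(E))_{t\ge 0}$ is bounded in $L^2$ (uniformly in $\alpha<\sqrt{2d}$ up to the factor $(\sqrt{2d}-\alpha)^{-2}$), hence converges to $\bar M^{\alpha,(q)}_{\infty}(E)$ in $L^2$. Writing $\xi^{\alpha}_{s,t}(x):=W^{\alpha,(q)}_t(x)-\bbE[W^{\alpha,(q)}_t(x)|\cF_s]$ and arguing as in \eqref{czer}--\eqref{czer3} (the conditional independence of $\xi^{\alpha}_{s,t}(x)$ and $\xi^{\alpha}_{s,t}(y)$ given $\cF_s$ when $|x-y|>e^{-s}$ uses exactly the structure from Section \ref{considerations}), one gets
\begin{equation*}
\tfrac{1}{(\sqrt{2d}-\alpha)^2}\bbE\!\left[\bigl|\bar M^{\alpha,(q)}_{\infty}(E)-\bbE[\bar M^{\alpha,(q)}_{\infty}(E)|\cF_s]\bigr|^2\right]
\le \limsup_{t\to\infty}\!\int_{E^2}\!\!\bbE\!\left[\tfrac{W^{\alpha,(q)}_t(x)W^{\alpha,(q)}_t(y)}{(\sqrt{2d}-\alpha)^2}\right]\ind_{\{|x-y|\le e^{-s}\}}\dd x\dd y.
\end{equation*}
Applying the third line of \eqref{whendelta} shows this vanishes as $s\to\infty$ uniformly in $\alpha$. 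Triangle inequality with the $L^2$ convergence $D^{(q)}_s(E)\to \bar D^{(q)}_{\infty}(E)$ from Proposition \ref{convdq} then yields the desired $L^2$ convergence.

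\textbf{Main obstacle.} The single nontrivial point is justifying the identification in Step 2: the pointwise limit obtained from the Cameron--Martin/optional stopping calculation must be matched with $\bbE[\bar M^{\alpha,(q)}_{\infty}(E)|\cF_s]$, and one must control the convergence uniformly in $\alpha$ as $\alpha\uparrow\sqrt{2d}$ (a parameter not present in the analogous Lemmas \ref{lecondit} and \ref{lecondit2}). The $L^2$-bound \eqref{tek3} and the uniform linear bound $1-e^{-y}\le y$ are what make both limits behave; everything else is a transcription of the schemes in Sections \ref{mmmt}--\ref{mmmgep}.
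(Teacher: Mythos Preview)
Your approach is exactly the one the paper takes: a conditional-expectation lemma (the paper's Lemma \ref{conditorei}, your Steps 1--2) giving $\lim_{\alpha\uparrow\sqrt{2d}}(\sqrt{2d}-\alpha)^{-1}\bbE[\bar M^{\alpha,(q)}_\infty(E)\mid\cF_s]=2D^{(q)}_s(E)$ in $L^2$, followed by the orthogonal decomposition and the third line of \eqref{whendelta}.

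There is one genuine gap in Step 3. You assert that since $(M^{\alpha,(q)}_t(E))_{t\ge 0}$ is bounded in $L^2$, it converges to $\bar M^{\alpha,(q)}_\infty(E)$ in $L^2$. Boundedness in $L^2$ together with almost-sure (or $L^1$) convergence does \emph{not} imply $L^2$ convergence. Fortunately you do not need it: what is required is only the inequality
\[
\bbE\Bigl[\bigl|\bar M^{\alpha,(q)}_\infty-\bbE[\bar M^{\alpha,(q)}_\infty\mid\cF_s]\bigr|^2\Bigr]
\;\le\; \liminf_{t\to\infty}\bbE\Bigl[\bigl|M^{\alpha,(q)}_t-\bbE[M^{\alpha,(q)}_t\mid\cF_s]\bigr|^2\Bigr],
\]
and this follows by applying Fatou to $\bbE[(\bar M^{\alpha,(q)}_\infty)^2]$ and combining with the $L^2$ convergence of the conditional expectations $\bbE[M^{\alpha,(q)}_t(E)\mid\cF_s]$ (a decreasing sequence dominated by $M^{\alpha,(q)}_s(E)\in L^2$, so monotone/dominated convergence applies). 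This is precisely how the paper argues at \eqref{important}; after that your $\xi^\alpha_{s,t}$ computation and the appeal to \eqref{whendelta} are identical to \eqref{torche}.

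A smaller point of the same flavour appears in Step 2: to upgrade the dominated-convergence limit to an $L^2$ limit you need the dominant to lie in $L^p$ for some $p>2$ (or uniform integrability of the squares), not merely uniform $L^2$ boundedness. The paper handles this by bounding the integrand in \eqref{trickle2} by $|q+\sqrt{2d}s-\bar X_s(x)|\,e^{\sqrt{2d}|X_s(x)|}$, which is in every $L^p$; your dominant $2a(x)W^{\alpha,(q)}_s(x)$ can be controlled the same way.
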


\begin{proof}[Proof of Theorem \ref{ziklob}]
As in the proof  Theorem \ref{mainres}, we first observe that the event ``$(\sqrt{2d}-\alpha)M^{\alpha}(E)$ and $\sqrt{2 \pi (\log 1/\gep)}M^{\sqrt{2d}}_{\gep}(E)$ converge towards the same limit'' only depends on the distribution of the process $X$. Hence, we can focus on the case where the probability space includes a martingale approximation sequence (the framework of Proposition \ref{stick}). To conclude we observe that 
from Proposition \ref{tiptop} (taking the limit of \eqref{coincix}) and \eqref{zob}, there exists a random  $q_0$ such that for $q\ge q_0$ we have  for every $\alpha\in (0,\sqrt{2d})$
$ \bar M^{\alpha,(q)}_{\infty}(E)=M^{\alpha}(E)$ and $\bar D^{(q)}_{\infty}(E)=  \bar D_{\infty}(E)$.
\end{proof}

\noindent To prove Proposition \ref{stick} we use the following replacement for Lemma \ref{lecondit}.

\begin{lemma}\label{conditorei}
Given $E\in \mathcal B_b(E)$, and $s\ge 0$ the following convergences holds in $L^2$.
  \begin{equation}\label{trickle}
 \bbE\left[ \bar M^{\alpha,(q)}_{\infty}(E) \ | \ \cF_s\right]
  =\lim_{t\to \infty}\bbE\left[ M^{\alpha,(q)}_{t}(E) \ | \ \cF_s\right],
 \end{equation}
\begin{equation}\label{tooproove}
\lim_{\alpha \uparrow \sqrt{2d}} \bbE\left[ \frac{\bar M^{\alpha,(q)}_{\infty}(E)}{\sqrt{2d}-\alpha} \ | \ \cF_s\right]=2 D^{(q)}_{s}(E).
\end{equation}

\end{lemma}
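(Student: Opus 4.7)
My plan is to prove both \eqref{trickle} and \eqref{tooproove} by first computing the conditional expectation $\bbE[W^{\alpha,(q)}_t(x) \ | \ \cF_s]$ explicitly via Cameron-Martin (Proposition \ref{cameronmartinpro}), in the spirit of the calculation \eqref{laconvmone} used for Lemma \ref{lecondit}, and then taking the successive limits $t \to \infty$ and $\alpha \uparrow \sqrt{2d}$ by means of the reflection-principle bounds of Lemma \ref{stupid}. Factoring out the $\cF_s$-measurable part $W^{\alpha,(q)}_s(x)$ and applying Cameron-Martin to the exponential tilt by $\alpha X^{(s)}_{t-s}(x)$, which shifts the mean of the Brownian motion $u\mapsto X^{(s)}_u(x)$ by $\alpha u$, yields
\begin{equation*}
\bbE\!\left[W^{\alpha,(q)}_t(x)\ |\ \cF_s\right]=W^{\alpha,(q)}_s(x)\,\bbP\!\left[\forall u\in[0,t-s],\ B_u\le (\sqrt{2d}-\alpha)u+a_s(x)\ |\ \cF_s\right],
\end{equation*}
where $a_s(x):=\sqrt{2d}\,s+q-\bar X_s(x)$ (strictly positive on $A^{(q)}_s(x)$) and $B$ is a standard Brownian motion independent of $\cF_s$.

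For \eqref{trickle}: the probability in the display is nonincreasing in $t$ and, by the last inequality in Lemma \ref{stupid}, converges pointwise to $1-e^{-2(\sqrt{2d}-\alpha)a_s(x)}$. Integrating over $E$ by Fubini, $\bbE[M^{\alpha,(q)}_t(E)\ |\ \cF_s]$ is a.s.\ decreasing to a limit, and by the $L^1$-convergence of $M^{\alpha,(q)}_t(E)$ established in Lemma \ref{martinmister}, this limit must coincide with $\bbE[\bar M^{\alpha,(q)}_\infty(E)\ |\ \cF_s]$. To upgrade to $L^2$, the nonnegative difference $\bbE[M^{\alpha,(q)}_t(E)\ |\ \cF_s]-\bbE[\bar M^{\alpha,(q)}_\infty(E)\ |\ \cF_s]$ is pathwise bounded by $M^{\alpha,(q)}_s(E)$ (the explicit formula above gives $\bbE[M^{\alpha,(q)}_t(E)\ |\ \cF_s]\le M^{\alpha,(q)}_s(E)$), and $M^{\alpha,(q)}_s(E)\in L^2$ by \eqref{tek3}; $L^2$-dominated convergence then gives \eqref{trickle}.

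For \eqref{tooproove}: the formula above evaluated at $t=\infty$ produces
\begin{equation*}
\frac{\bbE[\bar M^{\alpha,(q)}_\infty(E)\ |\ \cF_s]}{\sqrt{2d}-\alpha}=\int_E W^{\alpha,(q)}_s(x)\,\frac{1-e^{-2(\sqrt{2d}-\alpha)a_s(x)}}{\sqrt{2d}-\alpha}\,\dd x.
\end{equation*}
The integrand converges pointwise to $2W^{(q)}_s(x)a_s(x)=2Z^{(q)}_s(x)$ by the continuity of $\alpha\mapsto W^{\alpha,(q)}_s(x)$ together with the elementary limit $(1-e^{-\eta})/\eta\to 1$ as $\eta\to 0^+$. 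The inequality $1-e^{-\eta}\le\eta$ dominates the integrand by $2a_s(x)W^{\alpha,(q)}_s(x)$. On $A^{(q)}_s(x)$ the truncation forces $X_s(x)\le\sqrt{2d}\,s+q+\sup_{y\in E}|X_0(y)|$, so that for $\alpha\in[0,\sqrt{2d}]$ both $W^{\alpha,(q)}_s(x)$ and $a_s(x)$ are a.s.\ bounded uniformly in $\alpha$ and in $x\in E$ by random variables with finite moments of all orders (suprema of continuous Gaussian fields on a bounded set). The resulting $\alpha$-independent $L^2(\Omega\times E)$-dominant allows one to apply Fubini and $L^2$-dominated convergence to conclude \eqref{tooproove}.

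The key technical point, compared to the simpler Lemma \ref{lecondit}, is the upgrade from a.s.\ and $L^1$ convergence to $L^2$ convergence: this relies on the uniform second-moment bound \eqref{tek3} for \eqref{trickle}, and on the pathwise boundedness of $W^{\alpha,(q)}_s(x)$ provided by the truncation $A^{(q)}_s(x)$ for \eqref{tooproove}.
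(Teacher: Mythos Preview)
Your proof is correct and follows essentially the same route as the paper's: compute $\bbE[W^{\alpha,(q)}_t(x)\,|\,\cF_s]$ via Cameron--Martin, use monotonicity in $t$ together with the $L^1$ convergence from Lemma \ref{martinmister} and the $L^2$ domination by $M^{\alpha,(q)}_s(E)$ to obtain \eqref{trickle}, then pass to $t=\infty$ with Lemma \ref{stupid} and apply dominated convergence in $L^2$ (with an $\alpha$-independent dominant built from $a_s$ and $e^{\sqrt{2d}|X_s|}$) to obtain \eqref{tooproove}. The only cosmetic difference is that you take a supremum over $x\in E$ to produce an $x$-independent dominant, whereas the paper keeps the $x$-dependent bound $|\sqrt{2d}s+q-\bar X_s(x)|\,e^{\sqrt{2d}|X_s(x)|}$; both are in $L^p$ for all $p$ by Borell--TIS and lead to the same conclusion.
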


 \begin{proof}
Since  $M^{\alpha,(q)}_{t}(E)$ converges in $L^1$ (cf. Lemma \ref{martinmister}),
we already know that \eqref{trickle} holds in $L^1$.
Furthermore since the sequence $\bbE\left[ M^{\alpha,(q)}_{t}(E) \ | \ \cF_s\right]$ is decreasing (from the supermartingale property) and since  $M^{\alpha,(q)}_{s}(E)\in L^2$, the convergence also holds in $L^2$ by dominated convergence. We now move to the proof of \eqref{tooproove}. 
Exchanging the integral over $E$ with conditional expectation in \eqref{trickle} we obtain 
\begin{equation}\label{ouane}
\bbE\left[\bar M^{\alpha,(q)}_{\infty}(E) \ | \ \cF_s\right]= \lim_{t\to \infty}\int_E  \bbE\left[ W^{\alpha,(q)}_t(x) \ | \ \cF_s \right] \dd x
\end{equation}
 Dropping $x$ in the notation for readability we obtain from Cameron-Martin formula that
 \begin{equation*}
  \bbE\left[ W^{\alpha,(q)}_t \ | \ \cF_s \right]= 
  W^{\alpha,(q)}_s \bbP\left[ \forall u\in  [0,t-s], \  X^{(s)}_u
  \le q+ \sqrt{2d}s -\bar X_s +  (\sqrt{2d}-\alpha)u \ | \ \cF_s\right].
 \end{equation*}
The r.h.s.\ is nondecreasing in $t$, and from Lemma \ref{stupid}, we have 
\begin{equation}
 \lim_{t\to \infty}   \bbE\left[ W^{\alpha,(q)}_t(x) \ | \ \cF_s \right]=   W^{\alpha,(q)}_s(x) \left( 1- e^{-2(\sqrt{2d}-\alpha) [q+ \sqrt{2d}s -\bar X_s(x)]}\right).
\end{equation}
Using \eqref{ouane} and dominated convergence (our integrand is bounded by $W^{\alpha,(q)}_s(x)$) we obtain
 \begin{equation}\label{trickle2}
\bbE\left[ \frac{\bar M^{\alpha,(q)}_{\infty}(E)}{{\sqrt{2d}-\alpha}} \ | \ \cF_s\right]
=\int_E
W^{\alpha,(q)}_s(x) \left(\frac{1- e^{-2(\sqrt{2d}-\alpha) [q+ \sqrt{2d}s -\bar X_s(x)]}}{\sqrt{2d}-\alpha}\right)\dd x.
 \end{equation}
The integrand on the r.h.s.\ converges to $2 D^{(q)}_s(x)$ when $\alpha \uparrow \sqrt{2d}$. It is also smaller than   $|\sqrt{2d} s+q-\bar X_s(x)|e^{\sqrt{2d}|X_s(x)|}$ for every $\alpha \in (0,\sqrt{2d})$. 
We  conclude the proof of \eqref{tooproove} using dominated convergence. The domination also ensures boundedness in $L^p$ for  $p>2$ and thus  the convergence holds in $L^2$.
 \end{proof}

 \begin{proof}[Proof of Proposition \ref{stick}]

   Proceeding like  for Propositions \ref{dor} and \ref{gepqgep}, using first approximation of $\bar D^{(q)}_\infty$ with $D^{(q)}_s$ then an orthogonal decomposition conditioning to $\mathcal F_s$ and finally  Lemma \ref{conditorei} we are left with proving 

 \begin{equation}\label{lllastt}
 \lim_{s\to \infty} \limsup_{\alpha \uparrow \sqrt{2d}} (\sqrt{2d}-\alpha)^2 \bbE\left[  \left| \bar M^{\alpha,(q)}_\infty(E)-\bbE\left[  \bar M^{\alpha,(q)}_\infty(E)  \ | \ \cF_s \right] \right|^2 \right]=0.
 \end{equation}
 We omit the dependence in $E$ for readability. To apply our usual scheme of proof we want to replace $\infty$ by $t$ in \eqref{lllastt}. Fatou implies that 
 $ \bbE\left[ (\bar M^{\alpha,(q)}_\infty)^2\right]\le \liminf_{t\to \infty} \bbE\left[ (\bar M^{\alpha,(q)}_t)^2\right],$ combined with  \eqref{trickle} this yields 
\begin{equation}\label{important}
\bbE\left[  \left| \bar M^{\alpha,(q)}_\infty-\bbE\left[  \bar M^{\alpha,(q)}_\infty  \ | \ \cF_s \right] \right|^2 \right]\le
\liminf_{t\to \infty}\bbE\left[  \left|  M^{\alpha,(q)}_t-\bbE\left[   M^{\alpha,(q)}_t  \ | \ \cF_s \right] \right|^2 \right].
\end{equation}
Setting
$ \xi^{\alpha}_{s,t}(x):=W^{\alpha,(q)}_t(x)- \bbE\left[W^{\alpha,(q)}_t(y) \ | \ \cF_s \right]$
and repeating the argument \eqref{xixi}-\eqref{czer3} we obtain 
\begin{multline}\label{torche}
 \bbE\left[  \left|  M^{\alpha,(q)}_t-\bbE\left[   M^{\alpha,(q)}_t  \ | \ \cF_s \right] \right|^2 \right] =\int_{E^2}\bbE[\xi^{\alpha}_{s,t}(x)\xi^{\alpha}_{s,t}(y) ]\dd x \dd y\\ 
 =\int_{E^2}\!\!\!\! \ind_{\{|x-y|\le e^{-s}\}}\bbE[\xi^{\alpha}_{s,t}(x)\xi^{\alpha}_{s,t}(y) ]\dd x \dd y =\int_{E^2} \!\!\!\! \ind_{\{|x-y|\le e^{-s}\}}\bbE[W^{\alpha,(q)}_t\!(x) W^{\alpha,(q)}_t\!(y) ]\dd x \dd y.
\end{multline}
Combining \eqref{lllastt}, \eqref{important} and \eqref{torche}, we can conclude using the last line of \eqref{whendelta}.
 \end{proof}

 \appendix

 \section{Proof of technical estimates}\label{teklesproofs}

\subsection{Proof of Proposition \ref{tiptop}}\label{tekos1}

\noindent For simplicity we replace $B(0,R)$ by $[0,1]^d$ but this entails no loss of generality.
Like in \cite{MR3262492}, the first idea is to restrict ourselves to a discrete set instead of checking the inequality at every coordinates. We define
\begin{equation}\begin{split}
I_n:= \{ {\bf i}\in \bbZ^d  \ : \ {\bf i} e^{-n} \in [0,1]^d\}, \quad  &A_{n,{\bf i}}:= [n,n+1)\times \left(  {\bf i} e^{-n}+ [0,e^{-n})^d \right),\\
Y_{n,{\bf i}}:= \bar X_{n}( {\bf i}e^{-n}) , \quad 
&Z_{n,{\bf i}}:= \max_{(t,x)\in A_{n,{\bf i}}} \left(\bar X_{t}(x)-Y_{n,{\bf i}}\right).
\end{split}\end{equation}
The statement we want to prove can be restated as follows
\begin{equation}\label{dabest}
 \sup_{n\ge 0} \max_{{\bf i}\in I_n} \left[ Y_{n,{\bf i}}+Z_{n,{\bf i}} - \sqrt{2d}n + \frac{\log n}{2\sqrt{2d}}- \frac{4(\log \log n)}{\sqrt{2d}}\right]<\infty. 
 \end{equation}
The following estimate (whose proof we postpone to the end of the section) ensures that the tails of $Z_{n,{\bf i}}$ are uniformly subgaussian.
\begin{lemma}\label{ggttail}
 There exists a constant $c>0$ such that for all $n\ge 0$, ${\bf i}\in \mathbb Z^d$ and $\gl\ge 0$
 \begin{equation}\label{kilaro}
 \bbP[Z_{n,{\bf i}}\ge \gl ] \le 2 \exp\left( - c \gl^2  \right).
\end{equation}
\end{lemma}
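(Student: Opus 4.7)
The plan is to represent $Z_{n,{\bf i}}$ as the supremum of a centered Gaussian field whose metric structure is controlled uniformly in $n$ and ${\bf i}$, and then apply the Borell-TIS inequality. Since the kernel $\bar K_t(x,y)$ depends only on $x-y$, the law of $\bar X$ is translation invariant, and hence $Z_{n,{\bf i}} \stackrel{d}{=} Z_{n,0}$. After rescaling $t = n+s$, $x = e^{-n}y$ we have $Z_{n,0} = \sup_{(s,y)\in[0,1)\times[0,1)^d} V_n(s,y)$ where
\[
V_n(s,y) := \bar X_{n+s}(e^{-n}y) - \bar X_n(0).
\]
The task reduces to providing uniform-in-$n$ bounds on the pointwise variance and on $\bbE[\sup V_n]$.

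First I would establish, uniformly in $n$: (a) $\sup_{(s,y)} \bbE[V_n(s,y)^2] \le C_1$, and (b) $\bbE[(V_n(s,y)-V_n(s',y'))^2] \le C_2(|s-s'| + |y-y'|^2 \wedge 1)$. Both follow from the integral representation \eqref{kkttt}. For (a), decompose $V_n(s,y) = (\bar X_{n+s}(e^{-n}y) - \bar X_{n+s}(0)) + (\bar X_{n+s}(0) - \bar X_n(0))$: the second term has variance $s\le 1$, while the first equals $2\int_0^{\tau(n+s)}(1-\eta_1 e^{-\eta_2 u})(1-\kappa(e^{u-n}y))\,du$ (with $\tau(t) := t'$). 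Splitting at the scale $u^{*} := n - \log|y|$ at which $e^{u-n}y$ exits $\Supp\kappa\subset B(0,1)$, and using $0 \le 1 - \kappa(z) \le C(|z|^2 \wedge 1)$ (a consequence of $\kappa(0)=1$ with $\kappa \in C^2$) together with the fact that $\tau(n+s) - n$ remains bounded as $n \to \infty$, this integral is $O(1)$ uniformly. Estimate (b) is obtained by an analogous decomposition of the covariance of two increments, combined with the trivial bound $\bbE[(\bar X_{n+s'}-\bar X_{n+s})^2]=|s'-s|$ at a fixed spatial point.

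Given (a)-(b), Dudley's entropy bound applied to the pseudo-metric $d((s,y),(s',y')) := \sqrt{C_2}(|s-s'|^{1/2} + (|y-y'|\wedge 1))$ on $[0,1]\times[0,1]^d$ yields $\bbE[\sup V_n] \le C_3$ uniformly in $n$. The Borell-TIS inequality then gives, for $\gl \ge 2 C_3$,
\[
\bbP[Z_{n,{\bf i}} \ge \gl] \le \bbP\bigl[\sup V_n - \bbE[\sup V_n] \ge \gl/2\bigr] \le \exp(-\gl^2/(8 C_1)),
\]
while for $\gl\le 2 C_3$ the bound $2 e^{-c\gl^2}$ is trivial for $c$ small enough. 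Adjusting constants delivers \eqref{kilaro}.

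The main technical obstacle is the uniformity in $n$ of (a) and (b). The critical observation enabling this uniformity is that the "coarse-scale" contribution to the integral representation (from $u \ge u^{*}$) does not accumulate with $n$: this relies on both the support constraint $\Supp\kappa \subset B(0,1)$ (which cuts off the integrand for $u \ge u^{*}$) and the asymptotic $\tau(n+s) - n \to \eta_1/\eta_2$ that keeps the effective length of the integral of order one.
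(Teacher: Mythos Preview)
Your proposal is correct and follows essentially the same approach as the paper: reduce by translation invariance, establish a uniform bound on the pointwise variance and a uniform H\"older estimate for the canonical metric of the rescaled field, use a chaining bound to control $\bbE[\sup V_n]$, and conclude with Borell--TIS. The only cosmetic differences are that the paper obtains your bounds (a) and (b) in one stroke via the estimate $\bbE[(\bar X_s(x)-\bar X_t(y))^2]\le (t-s)+Ce^{2s}|x-y|^2$ (which makes the integral-splitting at $u^*$ unnecessary), and invokes Fernique's majorizing measure or Garsia--Rodemich--Rumsey in place of Dudley's entropy bound.
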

\noindent Combining \eqref{kilaro} and the fact that $Y_{n,{\bf i}}$ is a centered Gaussian with variance $n$, we obtain
\begin{equation}\label{convovo}
 \bbP\left[ Y_{n,{\bf i}}+Z_{n,{\bf i}} \ge A \right] \le \frac{2}{\sqrt{2\pi n}}\int_{\bbR}  \exp\left( - \frac{u^2}{2n} - c (A-u)_+^2  \right) \dd u.
\end{equation}
This implies  that 
\begin{equation*}
\sum_{n\ge 0}  \sum_{i \in I_n}\bbP \left[ Y_{n,{\bf i}}+Z_{n,{\bf i}} - \sqrt{2d}n - \frac{\log n}{\sqrt{2d}}\ge  0\right] \le \sum_{n\ge 0}  
\sum_{i \in I_n} Cn^{-3/2} e^{- dn}\le C' \sum_{n\ge 0}  n^{-3/2} <\infty.
\end{equation*}
Hence using Borel-Cantelli we obtain the following rough bound
\begin{equation}\label{firstbc}
 \sup_{n\ge 0} \sup_{i \in I_n} \left[ Y_{n,{\bf i}}+Z_{n,{\bf i}}- \sqrt{2d}n + \frac{\log n }{\sqrt{2d}} \right]<\infty.
\end{equation}
We now build on \eqref{firstbc} to obtain \eqref{dabest}. We set for $q\in \bbN$
$$\mathcal B_q:=  \bigcap_{n\ge 0} \bigcap_{i \in I_n}\left\{ Y_{n,{\bf i}}+Z_{n,{\bf i}}\le  \sqrt{2d}n + \frac{\log n}{\sqrt{2d}}+q  \right\} $$
From \eqref{firstbc} we have
$ \lim_{q\to \infty} \bbP\left[ \mathcal B_q \right]=1$.
Using the bound \eqref{bridjoux} together with the fact that 
$$ \mathcal B_q \subset \left\{ \forall t\in [0,n], \ \bar X_t({\bf i}e^{-n})\le  \sqrt{2d}t + \frac{\log n}{\sqrt{2d}} +q\right\} $$
we can compute an upper bound on the density distribution of $Y_{n,{\bf i}}$ restricted to $\cB_q$. Using improper but unambiguous notation we have
\begin{multline}
\bbP[Y_{n,{\bf i}}\in \dd u  \cap \cB_q] \le  \bP\left[ B_n \in \dd  u \ ; \ \forall t\in [0,n], B_t\le   \sqrt{2d}t + \frac{\log n}{\sqrt{2d}}+q\right]
\\ \le 
\frac{\dd u}{\sqrt{2\pi n}} e^{\frac{-u^2}{2n}} (q+\log n)(q+\log n+\sqrt{2d} n- u)_+.
\end{multline}
Similarly to \eqref{convovo}, setting $A_n:=  \sqrt{2d}n -\frac{\log n}{2\sqrt{2d}}+\frac{ 4(\log \log n)}{\sqrt{2d}}$ we obtain that
\begin{multline}
\bbP \left[ Y_{n,{\bf i}}+Z_{n,{\bf i}} - A_n\ge 0 \ ; \cB_q \right] \\
\le \frac{C_q(\log n)}{\sqrt{n}}\int  (q+\log n +\sqrt{2d}n-u)_+  e^{- \frac{u^2}{2n}-c(A_n-u)^2_+} \dd u
 \le \frac{C'_q  e^{- dn}}{n(\log n)^{2}}.
\end{multline}
Using Borel-Cantelli after summing over ${\bf i}\in I_n$ and $n\ge 0$, we obtain that for each $q\in \bbN$ 
\begin{equation}
 \left( \sup_{n\ge 0} \sup_{i \in I_n}  Y_{n,{\bf i}}+Z_{n,{\bf i}} - \sqrt{2d}n + \frac{\log n}{2\sqrt{2d}}-\frac{ 4(\log \log n)}{\sqrt{2d}} \right)\ind_{\cB_q}<\infty.
\end{equation}
Letting $q\to \infty$ we conclude the proof. \qed

\begin{proof}[Proof of Lemma \ref{ggttail}]
 The Borrel-TIS inequality (see \cite[Theorem 2.1]{Adler}) implies that the fluctuation of the maximum of a Gaussian field  around its mean are sub-Gaussian, that is 
 \begin{equation}\label{BTIS}
  \bbP[Z_{n,{\bf i}}\ge \bbE\left[ Z_{n,{\bf i}} \right]+u ] \le 2e^{-\frac{u^2}{2\sigma_{n}}}
 \end{equation}
where 
$\sigma_{n}:= \max_{(t,x)\in  A_{n,{\bf i}}} \bbE[ (\bar X_t(x)-Y_{n,{\bf,i}})^2]= 1+ \max_{x\in [0,e^{-n}]^2}\bbE[ (\bar X_n(x)-\bar X_n(0))^2].$\\
The maximized quantity  is simply equal $2(n-\bar K_n(0,x))$, and thus  using \eqref{bb5}, we have $\sup_{n\ge 0} \sigma_n<\infty$.
To deduce \eqref{kilaro} from  \eqref{BTIS}, we want a uniform  bound on $\bbE\left[ Z_{n,{\bf i}} \right]$ that is
\begin{equation}\label{labelouze}
\sup_{n\ge 0} \sup_{i \in I_n} \bbE\left[ Z_{n,{\bf i}} \right]=\sup_{n\ge 0} \bbE\left[ \max_{(t,x)\in[0,1]^{d+1}} \bar X_{n+t}(e^{-n}x) \right]<\infty
\end{equation}
(the equality comes from translation invariance and the fact that $Y_{n,{\bf i}}$ is centered). To achieve this, we need to control, uniformly in $n$, the modulus of continuity of the canonical metric associated with the field 
$(\bar X_{n+t}(e^{-t}x))_{(t,x)\in[0,1]^{1+d}}$, which is defined by
$$d_n((s,x),(t,y)):= \bbE\left[ (\bar X_{n+s}(e^n x)-\bar X_{n+t}(e^n y))^2\right]^{1/2}.$$
It is sufficient to prove that 
$d_n$ is uniformly H\"older continuous.
Given $t\ge s$ we have
\begin{multline}\label{bb5}
\bbE\left[ (\bar X_s(x)-\bar X_t(y))^2\right]= (t-s)+2(s- \bar K_s(x,y)) \le (t-s)+ 2\int^s_0 (1-Q_u(x,y)) \dd u
\\ \le (t-s)+ C\int^s_0 e^{2|u|}|x-y|^2 \dd u \le (t-s)+C e^{2|s|}|x-y|^2.
\end{multline}
Hence for any  $(s,x),(t,y) \in [0,1]^{d+1}$ we have (for a different constant $C$), 
\begin{equation}\label{bdt}
d_n((s,x),(t,y))^2 \le |t-s|+ C |x-y|^2.
\end{equation}
Using either Fernique's majorizing measure techniques 
(applying \cite[Theorem 4.1]{Adler} with Lebesgue measure on $[0,1]^{1+d}$) or Garsia-Rodemich-Rumsey inequality (see for instance \cite[Corollary 4]{LectureZ}) we obtain that \eqref{bdt} implies \eqref{labelouze} and conclude the proof.
\end{proof}

 \subsection{Proof of Propositions \ref{lateknik}}\label{tekos2}

Let us first prove \eqref{tek1}.
By Cameron-Martin formula, the $e^{\sqrt{2d}(X_t(x)+X_t(y))}$ term shifts the mean of $\bar X_s(x)$ and $\bar X_s(y)$ 
 by  $\sqrt{2d}(s+\bar K_s(x,y))$ for $s\in[0,t]$. As a result we obtain that 
 \begin{equation}\label{a11}
 \!\! \bbE\left[W^{(q)}_t(x) W^{(q)}_t(y)\right] =
  e^{2d K_t(x,y)} \bbP\left[ \forall s\in [0,t],  \bar X_s(x) \! \vee  \! \bar X_s(y)\le  q - \sqrt{2d} \bar K_s(x,y) \right].
 \end{equation}
To estimate the probability on the right-hand side, we perform a couple of simplifications. 
Recalling the definition of $u(x,y,t)$, from \eqref{thelogbound}, there exists  $C>0$ such that 
\begin{equation}
\label{krootz}
\bar K_s(x,y) \ge s\wedge u - C.
\end{equation}
For the remainder of the proof, we set $q'=q+C\sqrt{2d}$. Another observation is that $\bar X_{\cdot}(x)$ and $\bar X_{\cdot}(y)$ play symmetric roles so that 
adding the restriction $\bar X_u(x)\le \bar X_u(y)$ only changes the probability of the event  in the r.h.s.\ of \eqref{a11} by a factor $1/2$. It is thus smaller than 
\begin{multline}\label{wopwopwop}
2 \bbP\left[ \forall s\in [0,t],  \bar X_s(x) \! \vee  \! \bar X_s(y)\le  q' - \sqrt{2d}(u\wedge s) \ ; \ \bar X_u(x)\le \bar X_u(y) \right] 
 \\ \le 2 \bbP\left[ F_1 \cap F_2 \cap F_3\right]= 2 \bbE\left[ \ind_{F_1} \bbP[ F_2\cap F_3  \ | \ \cF_u] \right].
 \end{multline}
 where the events $(F_i)^3_{i=1}$ are defined by
 \begin{equation}
  \begin{split}
   F_1&:=  \{ \forall s\in [0,u],  \bar X_s(x)\le  q' - \sqrt{2d} s \},\\
   F_2&:=\{ \forall s\in [0,t-u],  X^{(u)}_s(x)\le  q'-\sqrt{2d}u - \bar X_u(x) \},\\
   F_3&:=\{ \forall s\in [0,t-u],  X^{(u)}_s(y)\le  q' -\sqrt{2d}u - \bar X_u(x) \}.
  \end{split}
 \end{equation}
 The $\bar X_u(x)$ appearing in the definition of $F_3$ is not a typo: the restriction $\bar X_u(x)\le \bar X_u(y)$   implies that $F_3$ (like $F_1$ and $F_2$) is included in the event
  in the l.h.s.\ of   \eqref{wopwopwop}.
From the observations of Section \ref{considerations}, $X^{(u)}_{\cdot}(x)$ and $X^{(u)}_{\cdot}(y)$ are independent and independent of $\mathcal F_u$.
Thus $F_2$ and $F_3$ are conditionally independent given $\cF_u$, and  using Lemma \ref{stupid} we have
\begin{equation}\label{cronde1}
  \bbP\left[ F_2\cap F_3 \ | \ \cF_u\right] \le 1\wedge \left(\frac{2(q'-\sqrt{2d}u -\bar X_u(x))^2_+}{\pi(t-u)}\right).
\end{equation}
To compute $\bbE\left[ \ind_{F_1} \bbP[ F_2\cap F_3  \ | \ \cF_u] \right]$, we use
 \eqref{bridjoux} and obtain that 
\begin{equation}\label{cronde2}
 \bbE[F_1 \ | \ \bar X_u(x)]\le 1\wedge \left(\frac{2q'(q' -\sqrt{2d}u-\bar X_u(x))_+}{u}\right).
\end{equation}
Putting together \eqref{cronde1} and \eqref{cronde2} and integrating w.r.t.\  $z=(q'-\sqrt{2d}u-\bar X_u(x))$ we have
\begin{multline}\label{joke}
\bbP\left[ F_1 \cap F_2 \cap F_3\right]\le  \int^{\infty}_0 \frac{e^{-\frac{(z-q'+\sqrt{2d} u)^2}{2u}}}{\sqrt{2\pi u}}
 \left(1\wedge \frac{2q'z}{u}  \right)     \left(1\wedge \frac{z^2}{\pi(t-u)}\right) \dd z\\
 \le \frac{C e^{-du}}{(u + 1)^{3/2}[(t-u)+ 1]}.
\end{multline}
 Since in \eqref{a11}  we have $e^{2d K_t(x,y)}\le C e^{2du}$ (cf. \eqref{thelogbound}) this concludes the proof of \eqref{tek1}.
To prove \eqref{tek2} we notice that from Cameron Martin formula we have
\begin{equation*}
  \bbE\left[\! W^{(q)}_\gep(x) W^{(q)}_\gep(y) \!\right] \! =\!
  e^{2d K_\gep(x,y)} \bbP\left[ \forall s\in [0,t],  \bar X_s(x) \! \vee  \! \bar X_s(y)\le  q - \sqrt{2d} \bar K_{s,\gep,0}(x,y) \right].
 \end{equation*}
Recalling the definition of $v(x,y,\gep)$, the bound \eqref{thelogbound} implies that  $K_\gep(x,y)\le v+C$
and 
$\bar K_{s,\gep,0}(x,y) \ge s\wedge v-C.$
We can thus repeat the proof of \eqref{tek1}  with $u$ replaced by $v$.

\medskip

\noindent To prove \eqref{tek3}-\eqref{tek4}, we set $\beta=\sqrt{2d}-\alpha$. Using \eqref{krootz} and Cameron-Martin formula
\begin{equation*}
  \bbE\left[ W^{\alpha,(q)}_t(x) W^{\alpha,(q)}_t(y) \right]  \le 
  e^{\alpha^2 K_t(x,y)} \bbP\left[ \forall s\in [0,t],  \bar X_s(x) \! \vee  \! \bar X_s(y)\le  q' +\beta s-\alpha (u\wedge s) \right].
 \end{equation*}
 We use \eqref{thelogbound} to bound  the exponential prefactor. We obtain (omitting $\alpha$ and $q$ in the r.h.s.\ for readability)
 \begin{equation}\begin{split}\label{dumb}
   \bbE\left[ W_t(x) W_t(y) \right]&\le  C  e^{\alpha^2 u } \bbP\left[ \forall s\in [0,u],  \bar X_s(x)  \le   q'- (2\alpha-\sqrt{2d}) s\right],\\  
  \lim_{t\to \infty} \bbE\left[ W_t(x) W_t(y) \right]&\le  C  e^{\alpha^2 w } \bbP\left[ \forall s\ge 0,  \  \bar X_s(x) \! \vee  \! \bar X_s(y)  \le   q' +\beta s-\alpha (w\wedge s)\right].
   \end{split}
 \end{equation}
We introduce the following events (for $r\ge 0$)
 \begin{equation}
  \begin{split}
   G^r_1&:=  \{ \forall s\in [0,r],\  \bar X_s(x)\le  q' - (2\alpha-\sqrt{2d}) s  \},\\
   G_{2}&:=\{ \forall s\ge 0, \ X^{(w)}_s(x)\le  q'-  (2\alpha-\sqrt{2d})w - \bar X_w(x)+\beta s \},\\
   G_{3}&:=\{ \forall s\ge 0, \ X^{(w)}_s(y)\le  q'-  (2\alpha-\sqrt{2d})w - \bar X_w(x)+\beta s \}.
  \end{split}
 \end{equation}
To prove \eqref{tek3}, we assume that $u\ge 1$ and $\alpha>\sqrt{2d/3}$ (if not then the bound $Ce^{\alpha^2 u^2}$ from  \eqref{dumb} is good enough). Integrating over $\bar X_u(x)$ and using Lemma \ref{stupid} we have 
\begin{equation}
 \bbP[G^u_1]\le \frac{1}{\sqrt{2\pi u}}\int_{\bbR} e^{-\frac{z^2}{2u}} \frac{2q'(q'- (2\alpha-\sqrt{2d}) u-z)_+}{u}\dd z \le C e^{-\frac{(2\alpha-\sqrt{2d})^2}{2}u} u^{-3/2}.
\end{equation}
We can conclude that \eqref{tek3} holds by observing that 
\begin{equation}\label{ckomca}
\alpha^2-\frac{(2\alpha-\sqrt{2d})^2}{2}=d- (\alpha-\sqrt{2d})^2\le d.
\end{equation}
 For \eqref{tek4} starting with the second line in\eqref{dumb}  we  proceed as in \eqref{wopwopwop} and  obtain  
 \begin{equation}
    \lim_{t\to \infty} \bbE\left[ W_t(x) W_t(y) \right]\le 2C e^{\alpha^2 w} \bbP\left[G^w_1\cap G_2\cap G_3 \right]
 \end{equation}
Repeating the steps leading to \eqref{joke} and replacing \eqref{cronde2} by (cf. Lemma \ref{stupid})
\begin{equation}
 \bbP\left[ G_2\cap G_3  \ | \ \cF_u \right]\le 4\beta^2(q' -(2\alpha-\sqrt{2d})w- \bar X_w(x))
\end{equation}
  we obtain (the variable of integration is $z=q' -(2\alpha-\sqrt{2d})w - \bar X_w(x)$)    
\begin{equation*}
\bbP\left[ F_1 \cap \bar G_2 \cap \bar G_3\right]\le  \int^{\infty}_0 \frac{e^{-\frac{(z-q'+(2\alpha-\sqrt{2d}) w)^2}{2w}}}{\sqrt{2\pi w}}
 \left(1\wedge \frac{2q'z}{w}  \right)    4\beta^2z^2    \dd z\\
 \le \frac{C \beta^2 e^{\frac{(2\alpha-\sqrt{2d})^2w}{2}}}{(w + 1)^{3/2}}.
\end{equation*}
Recalling \eqref{dumb} and \eqref{ckomca}, this is sufficient to conclude the proof of \eqref{tek4}.

\medskip

To prove \eqref{hiphip}, we set $\delta=|E|$. Without loss of generality we can assume that $|\delta|\le 1$.
Expanding the second moment, and  the change of variable $r=-\log 1/|x-y|$ when integrating over $y$ we obtain
\begin{multline}
 \bbE\left[ M^{(q)}_t(E)^2\right]\le \int_E\left(\int_{B(x,\delta)} W^{(q)}_t(x)W^{(q)}_t(y)\right) \dd x
 \\ \le C \gl(E)\int^{\infty}_{\log (1/\delta)}  \frac{ e^{-dr} e^{dr\wedge t} \dd r}{ (r\wedge t+1 )^{3/2} (t-r\wedge t+1) }.
\end{multline}
The factor $\gl(E)$ can be bounded by $\delta^d$. Assuming that $t\ge 2\log (1/\delta)$ we have
\begin{multline}\label{joke3}
 \int^{\infty}_{\log (1/\delta)}  \frac{ e^{-dr} e^{d(r\wedge t)} \dd r}{ (r\wedge t+1 )^{3/2} (t-r\wedge t+1) } \\
 \le  \frac{2}{t} \int^{t/2}_{\log (1/\delta)} \frac{\dd r }{(r+1)^{3/2}} + \frac{1}{(t/2+1)^{3/2}}\int^t_{t/2} \frac{\dd r}{t-r+1}
 +\frac{1}{(t+1)^{3/2}}\int^{\infty}_t e^{-d(r-t)} \dd r
 \\
 \le C \left( t^{-1} (\log 1/\delta)^{1/2} +  t^{-3/2}\log t\right).
\end{multline}
This proves \eqref{hiphip} and the two first lines of  \eqref{whendelta} can be obtained similarly.\\
For the third line of \eqref{whendelta}, using \eqref{tek3} (which of course valid if also $u$ is replaced by $w$), and applying Fatou to the positive integrable function $(Ce^{dw}(1+w)^{3/2}-\bbE\left[W_t(x)W_t(y) \right])$ we obtain that the $\limsup$ of in the l.h.s.\ of \eqref{adrenaline} is smaller than the integral of the $\limsup$ of the integrand. With \eqref{tek4} this yields (recall that $\beta=\sqrt{2d}-\alpha$)
\begin{equation}\label{adrenaline}
\limsup_{t\to \infty} \int_{E^2} \bbE\left[W^{\alpha,(q)}_t(x)W^{\alpha,(q)}_t(y)\right]\ind_{\{|x-y|\le \delta\}} \dd x \dd y\le C\beta^2 \int_{E^2} \frac{e^{dw}\ind_{\{|x-y|\le \delta\}} }{(w+1)^{3/2}}\dd x \dd  y,
\end{equation}
and it is easy to conclude from there.
\qed

\subsection{Estimates on the covariance}\label{prosteak}
Let us start with \eqref{steak}
Since from \eqref{defqt} we have $Q_u(x,y)\le \ind_{\{|x-y|\le e^{-u}\}}$ we obtain that 
for $|x-y|\le 1$,
\begin{equation}
 \bar K_t(x,y)\le \int^{t}_0  \ind_{\{|x-y|\le e^{-u}\}}\dd u = t\wedge \log_+\frac{1}{|x-y|}.
\end{equation}
Now for a lower bound, using the $C^2$ regularity of $\kappa$ around $0$, \eqref{defqt} implies that 
$$Q_u(x,y)\ge 1-Ce^{2u}|x-y|^2$$
This implies that if $|x-y|\le e^{-s}$ then
$$ \bar K_s(x,y)\ge  s- C\int^t_0 e^{2u}|x-y|^2 \ge s-C'$$
Using the monotonicity in $s$, applying this bound for $s=t\wedge \log_+( 1/|x-y|)$ we obtain
\begin{equation}
 \bar K_s(x,y)\ge t\wedge \log_+( 1/|x-y|)- C.
\end{equation}
The upper and lower bound remain valid with an additional constant when adding $K_0(x,y)$ which is continuous and hence locally bounded, 
and convoluting the upper or the lower bound with $\theta_\gep$ on one or both variables yields \eqref{thelogbound} for $K_{t,\gep}$ and $\bar K_{t,\gep,0}$.

\section{Weak convergence of measure}\label{weakconvmeas}

Let us first present a metric which corresponds to the topology of weak convergence on $\bbR^d$ (for locally finite, nonnegative measures). We let $\pi_R$ denote the Lévy-Prokhorov metric between two finite measures on $B(0,R)$
\begin{equation}
 \pi_R(\mu, \nu):= \inf \{ \gep>0 \ : \ \forall A\in \cB_R, \     \mu(A^{\gep}) \le \nu(A)+\gep \text{ and } \nu(A^{\gep}) \le \mu(A)+\gep   \}
\end{equation}
where $\cB_R$ is collection of Borel subsets of $B(0,R)$ and 
$$A^{\gep}:= \{ x\in B(0,R) \ : \  \exists y\in A, |y-x|\le \gep \}.$$
For locally finite measures $\mu$ and $\nu$ on $\bbR^d$, the metric $d$ defined by
$$d(\mu,\nu):= \sum^{\infty}_{R=1} 2^{-R} \max( \pi_R(\mu, \nu),1),$$
where in the summand $\mu$ and $\nu$ are identified with their restriction on $B(0,R)$, generates the topology of weak convergence of \eqref{wikiwik}.
We say that a sequence of random measure $M_n$ converges \textit{weakly in probability} if 
\begin{equation}
 \forall \gep\in(0,1], \ \lim_{n\to \infty}\bbP\left[ d(M_n,M)>\gep \right]=0.
\end{equation}
Since the topology of weak convergence is separable, convergence in probability  implies that there exists a subsequence that converges weakly almost surely. Recall that $\gl$ denote the Lebesgue measure.

\begin{proposition}\label{weako}
 Let $M_n$ be a sequence of non-negative random measures and let us assume that for any fixed $E\in \cB_b$, 
 $M_n(E)$ converges in probability towards a finite limit $\bar M(E)$.
 Then the following holds
 \begin{itemize}
  \item [(i)] For any $f\in B_b$, $M_n(f)$ converges in probability towards a finite limit $\bar M(f)$.
  \item [(ii)]
  There exists a random measure $M$  towards which $M_n$ converges in probability.
  \item [(iii)] If additionally there exist $K>0$ such that $\bbE[M_n(E)]\le K\gl(E)$ for all $E\in \mathcal B_b$ and $n$, then for all    $f\in B_b$, $\bbP[\bar M(f)= M(f)]=1$.
 \end{itemize}
If the convergence of  $M_n(E)$ holds a.s.\ then the convergence $(i)$ and $(ii)$ also hold a.s.

\end{proposition}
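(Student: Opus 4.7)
The plan is to reduce everything to two standard ingredients: the density of simple functions (resp.\ continuous functions) in $B_b$, and the fact that the space of locally finite nonnegative measures on $\bbR^d$ equipped with the vague topology \eqref{wikiwik} is Polish. Statement (i) will follow from simple-function approximation together with tightness of $M_n(B(0,R))$; statement (ii) from tightness of $(M_n)$ as random elements of the space of measures combined with a countable-determining-family argument; and statement (iii) from $L^1(\gl)$ approximation by continuous functions, using the hypothesis on expectations.

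For (i), I would fix $f\in B_b$ with $|f|\le C$ and support in $B(0,R)$, and consider a simple-function approximation $\phi_\delta=\sum_i c_i\ind_{E_i^\delta}$ with $E_i^\delta\in \cB_b$ and $\|f-\phi_\delta\|_\infty\le \delta$. The hypothesis and linearity give $M_n(\phi_\delta)\to \sum_i c_i \bar M(E_i^\delta)=:\bar M(\phi_\delta)$ in probability, while the pointwise bound $|M_n(f)-M_n(\phi_\delta)|\le \delta M_n(B(0,R))$ together with the tightness of the convergent sequence $M_n(B(0,R))$ shows that $M_n(f)$ is Cauchy in probability and that $\bar M(\phi_\delta)$ converges as $\delta\to 0$; the common limit is $\bar M(f)$. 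If the original convergence is a.s., applying the argument pathwise on the full-measure event where $M_n(E)\to \bar M(E)$ holds simultaneously for the countable family of sets $(E_i^{1/k})_{i,k}$ yields the a.s.\ version.

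For (ii), the tightness of the laws of $M_n$ in the Polish space of locally finite measures follows by a diagonal argument: since $M_n(B(0,R))$ converges in probability for every $R$, for any $\eta>0$ we can find $L_R$ with $\bbP[M_n(B(0,R))>L_R]<\eta\, 2^{-R}$ uniformly in $n$, and the vague-compact set $\{\mu:\mu(B(0,R))\le L_R\ \forall R\}$ carries $M_n$ with probability at least $1-\eta$. Extracting from any subsequence a further subsequence $(M_{n_k})$ converging a.s.\ vaguely to some random measure $M$, part (i) applied to a countable dense family $(f_j)\subset C_c(\bbR^d)$ gives $M(f_j)=\bar M(f_j)$ a.s.\ for all $j$, which determines $M$ uniquely a.s. Uniqueness of the subsequential limit then upgrades the subsequential a.s.\ convergence to convergence of the whole sequence $M_n$ in probability (a.s.\ in the corresponding case).

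For (iii), the bound $\bbE[M_n(E)]\le K\gl(E)$ extends by monotone class to $\bbE[M_n(|h|)]\le K\|h\|_{L^1(\gl)}$ for $h\in B_b$, and Fatou along the a.s.-convergent subsequence transfers this bound to both $M$ and $\bar M$. Since $\bar M(g)=M(g)$ a.s.\ for $g\in C_c$ by (ii), given $f\in B_b$ and $g\in C_c$ with $\|f-g\|_{L^1(\gl)}<\delta$ we obtain $\bbE|\bar M(f)-M(f)|\le \bbE[\bar M(|f-g|)]+\bbE[M(|f-g|)]\le 2K\delta$, so $\bar M(f)=M(f)$ a.s. The main obstacle I anticipate is conceptual rather than computational: the family $\{\bar M(E)\}_{E\in\cB_b}$ is defined only set by set as a limit in probability, with exceptional null sets depending on $E$, so it is not a priori a measure; the whole point of step (ii) is to convert this data into an honest random measure via tightness and extraction of vague limits in a Polish space, the remainder being a density argument.
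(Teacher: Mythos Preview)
Your proposal is correct and follows essentially the same approach as the paper: simple-function approximation controlled by $M_n(B(0,R))$ for (i), tightness of $(M_n)$ via boundedness of $M_n(B(0,R))$ combined with convergence on a countable determining family for (ii), and $L^1(\gl)$-approximation by continuous functions together with Fatou for (iii). Your write-up is in fact slightly more detailed than the paper's in part (ii), where you spell out the Polish-space subsequence extraction and uniqueness argument that the paper compresses into the single line ``it is sufficient to prove tightness.''
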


\begin{proof}
 For $(i)$ we observe that the statement is obviously true for simple functions with bounded support.
 Given $f\in B_b$, and $k\ge 1$, setting $E:=\{x \ : \ f(x)\ne 0\}$ we can find an increasing sequence of simple functions $(f_k)_{k\ge 1}$  such that 
 $0\le (f-f_k)\le k^{-1}\ind_E$. This entails
 \begin{equation}
0\le M_n(f)-M_n(f_k)\le k^{-1} M_n(E)
 \end{equation}
and hence that $M_n(f)$ converges to $\lim_{k\to \infty} \bar M(f_k)$. 
For $(ii)$ note that $(i)$ already entails the convergence of $(M_n(f))_{f\in F}$ for any finite family $F\subset C_c(\bbR^d)$. 
Hence it is sufficient to prove that the sequence $(M_n)_{n\ge 0}$ is tight for the weak convergence. For almost-sure convergence we must check that $\sup_n M_n(B(0,R))$ is finite for every $R\in \bbN$ 
and for convergence in probability we need to check that for every $R\in \bbN$ the sequence  $M_n(B(0,R))$ is tight. In both cases this is a consequence
the fact that, by assumption $M_n(B(0,R))$ converges.

\medskip

\noindent For $(iii)$, given  $g\in C_c(\bbR^d)$ nonnegative, since by $(ii)$, $M_n(g)\to M(g)$   Fatou  implies that
\begin{equation}\label{bdeux}
\bbE\left[ M(g) \right]\le  K \int g(x)\dd x.
\end{equation}
This implies that the measure $E\mapsto \bbE[M(E)]$ is absolutely continuous with respect to Lebesgue with density bounded above by $K$.
Now given  $f\in B_b$ and $\delta$ we let  $g\in C_c(\bbR^d)$ be such that $\int |f-g|\dd x \le \delta$ we have
\begin{equation}
\bbE\left[ |M(f)-\bar M(f)|\right]\le \bbE\left[ |M(f)-M(g)|\right]+\bbE\left[ |\bar M(f)-M(g)|\right].
\end{equation}
Our observation concerning the density of $\bbE[M(\cdot)$ implies that the first term is smaller than $K \delta$.
The second term is smaller than  $\bbE\left[ \bar M(|f-g|)\right]$ which by Fatou is smaller than
\begin{equation}
\liminf_{n\to \infty} \bbE\left[ M_n(|f-g|) \right]\le K\delta. 
\end{equation}
Since $\delta$ is arbitrary this is sufficient to conclude.
\end{proof}

\section{Extending the main result beyond star-scale invariance} \label{sobolev} 

Using a decomposition result from  \cite{junnila2019}, we show that our main theorems can be extended to sufficiently regular kernels defined on an arbitrary domain $\cD$.  
Let us recall the definition for the Sobolev space with index $s\in \bbR$ on $\bbR^k$ 
which is the Hilbert space of complex valued function associated with the norm 
\begin{equation}
 \| \varphi \|_{H^{s}(\bbR^k)}:= \left( \int_{\bbR^{k}} (1+|\xi|^2)^s |\hat \varphi(\xi)|^2 \dd \xi \right)^{1/2},
\end{equation}
where $\hat \varphi(\xi)$ is the Fourier transform of $\varphi$  defined for $\varphi\in C^{\infty}_c(\bbR^k)$ by
$ \hat \varphi(\xi)= \int_{\bbR^{k}} e^{i\xi x} \varphi(x)\dd x.$
For $U\subset \bbR^k$ open, the local Sobolev space $H^{s}_{\mathrm{loc}}(U)$ denotes the function which belongs to  $H^{s}(U)$ after multiplication by an arbitrary smooth function with compact support
\begin{equation}\label{locsob}
H^{s}_{\mathrm{loc}}(U):= \left\{  \varphi : U \to \bbR  \ | \ \rho\varphi\in
 H^{s}(\bbR^d) \text{ for all } \rho\in C^{\infty}_c(U)\right\},
\end{equation}
where above $\rho\varphi$ is identified with its extension by zero on $\bbR^k$.
The following result is a consequence of  \cite[Theorem 4.5]{junnila2019}.

\begin{proposition}\label{dapopo}
 If $K$ is a positive definite kernel on $\cD$ that can be written in the form \eqref{fourme} with 
 $L\in H^{s}_{\mathrm{loc}}(\cD^2)$ with $s>d$, then for every $z\in \mathcal D$, there exist $\delta_z>0$ such that the restriction of 
 $K$ to $B(z,\delta_z)$ has an almost star-scale invariant part.
\end{proposition}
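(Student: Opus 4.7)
The plan is to reduce the statement to the decomposition theorem \cite[Theorem 4.5]{junnila2019} after establishing the appropriate local regularity of $L$.

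First I would invoke the Sobolev embedding theorem. Since $\cD^2$ is a domain of dimension $2d$ and $s>d$, the local Sobolev space $H^{s}_{\mathrm{loc}}(\cD^2)$ embeds continuously into $C^{\gamma}_{\mathrm{loc}}(\cD^2)$ for any $\gamma\in (0,\min(1,s-d))$. In particular $L$ is locally H\"older continuous on $\cD^2$. Fixing $z\in\mathcal D$, we may choose $\rho>0$ with $\overline{B(z,2\rho)}\subset \cD$ so that $L$ restricted to $B(z,2\rho)^2$ is bounded and $\gamma$-H\"older continuous for some $\gamma>0$.

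Next I would set up the candidate decomposition. For any admissible choice of parameters $\eta_1\in[0,1]$, $\eta_2>0$ and admissible radial bump $\kappa$, the almost star-scale invariant kernel $\bar K$ defined by \eqref{iladeuxstar} satisfies $\bar K(x,y)=\log(1/|x-y|)+C_{\bar K}(x-y)$ where $C_{\bar K}$ is a continuous function on $\bbR^d$ (smooth away from the origin, bounded near it). Hence, for any such choice, the candidate remainder
\[
K_0(x,y):=K(x,y)-\bar K(x,y)= L(x,y)-C_{\bar K}(x-y)
\]
is automatically H\"older continuous on $B(z,2\rho)^2$, so two of the three requirements in \eqref{3star} will hold for free.

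The hard part is to ensure that the remainder $K_0$ is \emph{positive definite} on some smaller ball $B(z,\delta_z)^2$ with $\delta_z\le \rho$. This is exactly the content of \cite[Theorem 4.5]{junnila2019}: by a careful choice of $\eta_1,\eta_2$ small and $\kappa$ concentrated at a sufficiently fine scale, the almost star-scale invariant part $\bar K$ can be tuned so that $L-C_{\bar K}$ defines a positive definite kernel on a sufficiently small neighbourhood of the diagonal point $(z,z)$. Applying that theorem to $L$ on $B(z,\rho)^2$ produces the required $\bar K$ and $\delta_z>0$ such that \eqref{3star} holds on $B(z,\delta_z)$, which finishes the proof. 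The main technical obstacle is isolated in this last step, and we simply import it from \cite{junnila2019}.
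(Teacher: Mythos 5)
Your proposal is correct and follows essentially the same route as the paper, which offers no argument beyond citing \cite[Theorem 4.5]{junnila2019}: that theorem directly provides, near any $z\in\cD$, a decomposition of $K$ into an almost star-scale invariant kernel plus a H\"older continuous positive definite remainder under the hypothesis $L\in H^{s}_{\mathrm{loc}}(\cD^2)$, $s>d$. Your preliminary Sobolev-embedding step showing $L$ is locally H\"older is harmless but not needed, since the H\"older regularity (as well as the positive definiteness) of the remainder $K_0$ is already part of the cited theorem's conclusion.
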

\noindent A consequence of the above is the following generalization of Theorem \ref{mainresprim}. A similar generalization can be made for Theorem  \ref{ziklob}.
The set of continuous resp. measurable function on $\cD$ vanishing outside a compact is denoted by $C_c(\cD)$ resp. $B_b(\cD)$. 
If $K$ is a positive definite kernel on $\cD$ that can be written in the form \eqref{fourme} then 
we define a centered Gaussian field $X$ indexed by $C_c(\cD)$ as in Section \ref{molly} and given $\gep\in (0,1]$ 
set $X_{\gep}(x):=\langle X, \theta_{\gep}\rangle$ for $x\in \cD_{\gep}$ where 
$\cD_{\gep}:=\{ x\in \cD \ : \ \forall y\in  \bbR^d \setminus \cD, \  |x-y|\ge 2\gep \}$ (the definition ensures that $\theta_{\gep}$ can be identified with an element of $C_c(\cD)$).
Then we define for $f\in B_b(\cD)$

$$ M_{\gep}(f):= \int_{\cD_{\gep}}e^{\sqrt{2d}X_{\gep}(x)-d \bbE[X_{\gep}(x)^2]}  \dd x.$$ 

\begin{theorem}\label{sobolevteo}
  If $K$ is a positive definite  kernel on $\cD$ that can be written in the form \eqref{fourme} with 
 $L\in H^{s}_{\mathrm{loc}}(\cD^2)$ with $s>d$.
 Then there exists a random measure $M'$on $\cD$  with dense support such that for any choice of mollifier $\theta$, $\sqrt{\pi\log (1/\gep)/2}M_{\gep}$ 
 converges weakly in probability towards a limit $M'$. For every $f\in B_b(\cD)$ we have $ \lim_{\gep\to 0} M^{(\sqrt{2d})}_{\gep}(f)= M'(f)$ in probability
 and for any compact subset $D\subset \cD$, we have (recall \eqref{phifou})
 \begin{equation}\label{jobi}
  \sup_{E\subset D} M'(E)/\phi(|E|)<\infty.
 \end{equation}

\end{theorem}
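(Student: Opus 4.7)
The plan is to use Proposition \ref{dapopo} to reduce locally to the almost star-scale invariant case treated in Theorem \ref{mainresprim}, and then patch the local limits into a global measure on $\cD$. Fix $z\in\cD$ and let $\delta_z>0$ be as in Proposition \ref{dapopo}, so that on $B(z,\delta_z)^2$ we can write $K=K_0+\bar K$, with $\bar K$ almost star-scale invariant (hence already defined on all of $\bbR^{2d}$) and $K_0$ Hölder continuous and positive definite on $B(z,\delta_z)^2$. The first step is to extend $K_0$ to a globally Hölder continuous positive definite kernel $K_0^{\ext}$ on $\bbR^{2d}$ and set $K^{\ext}:=K_0^{\ext}+\bar K$; this is a kernel on $\bbR^d$ that has an almost star-scale invariant part in the sense of \eqref{3star} and agrees with $K$ on $B(z,\delta_z)^2$.

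Let $X^{\ext}$ be a centered Gaussian field on $\bbR^d$ with covariance $K^{\ext}$, and $M_\gep^{\ext}$ its mollified critical chaos. For $\gep<\delta_z/2$ and $x\in B(z,\delta_z/2)$, the test function $\theta_\gep(x-\cdot)$ is supported in $B(z,\delta_z)$, so $X_\gep(x)$ depends only on $X|_{C_c(B(z,\delta_z))}$, which has the same distribution as $X^{\ext}|_{C_c(B(z,\delta_z))}$ by construction of $K^{\ext}$. Consequently the processes $(\sqrt{\pi\log(1/\gep)/2}\, M_\gep(E))_{\gep<\delta_z/2,\, E\subset B(z,\delta_z/2)}$ and their $X^{\ext}$-counterparts have identical joint laws. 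Since convergence in probability is a joint distributional property (phrasable as Cauchy-in-probability), Theorem \ref{mainresprim} applied to $X^{\ext}$ yields convergence in probability of $\sqrt{\pi\log(1/\gep)/2}\, M_\gep(E)$ for every Borel $E\subset B(z,\delta_z/2)$ to some $M'_z(E)$.

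For the patching, cover $\cD$ by countably many balls $B(z_n,\delta_{z_n}/2)$ (possible by separability); on each overlap the local limits must agree almost surely by uniqueness of limits in probability, and together they define a single random measure $M'$ on $\cD$. For any $E\in \mathcal B_b(\cD)$, a finite subcover of $\overline E$ together with a decomposition $E=\sqcup_i E_i$ with $E_i\subset B(z_i,\delta_{z_i}/2)$ gives convergence in probability of $M_\gep(E)$ by summation. Proposition \ref{weako} then produces convergence of $M_\gep(f)$ for $f\in B_b(\cD)$ and weak convergence in probability, after transferring the uniform bound $\bbE[\sqrt{\pi\log(1/\gep)/2}\, M_\gep(E)]\leq C \gl(E)$ from the extended fields. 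Dense support of $M'$ follows from a.s.\ positivity of $M'_z(B)$ on rational balls $B$ of positive measure (inherited from Theorem \ref{mainresprim}), and the gauge bound \eqref{jobi} is obtained by a finite cover of any compact $D\subset\cD$ by balls $B(z_i,\delta_{z_i}/2)$ and applying \eqref{gageli} inside each. The main technical obstacle is the extension of $K_0$ to a globally Hölder and positive definite kernel on $\bbR^{2d}$; this step is where the Sobolev regularity hypothesis $L\in H^s_{\mathrm{loc}}(\cD^2)$, $s>d$, is really needed, and it is the content of the construction borrowed from \cite{junnila2019}.
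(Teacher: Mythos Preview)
Your proof is correct and follows essentially the same route as the paper's sketch: localize via Proposition \ref{dapopo}, apply the almost star-scale invariant result (Theorem \ref{mainres}/\ref{mainresprim}) on each small ball, and patch. The paper uses a finite cover of $\bar E$ with a continuous partition of unity $(\rho_i)$ rather than your disjoint Borel decomposition, and it leaves implicit the extension/law-transfer step you spell out (which, as you correctly note, is already contained in Proposition \ref{dapopo} via the definition \eqref{3star}); these are cosmetic differences, not substantive ones.
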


\begin{proof}[Sketch of proof]
 By Proposition \ref{weako}, it is sufficient to prove the convergence in probability of 
 $M^{(\sqrt{2d})}_{\gep}(E)$ for every $E\in \cB_d(\cD)$. Since $\bar E$, the topological closure of $E$ is compact, 
 using Proposition \ref{dapopo}, we can cover it by a finite number of balls $(B(z_i,\gd_i)_{i\in I}$.
 We can associate to this cover a partition of unity $(\rho_i)_{i\in I}$ (the $\rho_i$ are continuous, vanish outside $B(z_i,\gd_i)$ and $\sum_{i\in I}\rho_i(x)= 1$  for $x\in \bar E$).
 Applying Theorem \ref{mainres} and Proposition \ref{weako}, we obtain that $M^{(\sqrt{2d})}_{\gep}(\rho_i \ind_E)$ converges in probability for every $i\in I$ and hence $M^{(\sqrt{2d})}_{\gep}(\ind_{\rho_i} E)$ 
 converges for every $i$, and thus $M^{(\sqrt{2d})}_{\gep}(E)$.
 We deduce \eqref{jobi} from \eqref{gageli} in a similar fashion.
\end{proof}

\bibliographystyle{plain}
\bibliography{bibliography.bib}

\end{document}